\newcommand{\ls}{\ensuremath{\text{\textsc{LS}}}}
\newcommand{\lsapprox}{\ensuremath{\texttt{\textsc{LSapprox}}}}
\newcommand{\lsrel}[1]{\ensuremath{\text{\textsc{LSapprox}}^{\text{rel}}_{#1}}}
\newcommand{\lsabs}[1]{\ensuremath{\text{\textsc{LSapprox}}^{\text{abs}}_{#1}}}
\title{Optimization of Convex Functions with Random Pursuit\footnotemark[1]}
\author{S.~U. Stich\footnotemark[2] 
\and C.~L. M\"{u}ller\footnotemark[3]
\and B. G\"{a}rtner\footnotemark[4]}
\DeclareMathOperator{\expect}{\mathbb{E}}
\DeclareMathOperator{\sign}{sign}
\newcommand{\lin}[1]{\ensuremath{\left\langle #1 \right\rangle}}
\newcommand{\reals}{\mathbb{R}}
\newcommand{\naturals}{\mathbb{N}}
\newcommand{\normal}{\mathcal{N}}
\providecommand{\norm}[1]{\left\lVert#1\right\rVert}
\definecolor{darkred}{rgb}{.5,0,0}
\definecolor{darkgreen}{rgb}{0,.4,.2}
\definecolor{darkblue}{rgb}{.1,.2,.6}
\begin{document}
\maketitle

\renewcommand{\thefootnote}{\fnsymbol{footnote}}
\footnotetext[1]{The project CG Learning acknowledges the financial support of the Future and Emerging Technologies (FET) programme within the Seventh Framework Programme for Research of the European Commission, under FET-Open grant number: 255827}
\footnotetext[2]{Institute of Theoretical Computer Science, ETH Z\"urich, and Swiss Institute of Bioinformatics, \texttt{sstich@inf.ethz.ch}}
\footnotetext[3]{Institute of Theoretical Computer Science, ETH Z\"urich, and Swiss Institute of Bioinformatics, \texttt{christian.mueller@inf.ethz.ch}}
\footnotetext[4]{Institute of Theoretical Computer Science, ETH Z\"urich, \texttt{gaertner@inf.ethz.ch}}
\renewcommand{\thefootnote}{\arabic{footnote}}

\begin{abstract}
We consider unconstrained randomized optimization of convex objective functions. We analyze the Random Pursuit algorithm, which iteratively computes an approximate solution to the optimization problem by repeated optimization over a randomly chosen one-dimensional subspace. This randomized method only uses zeroth-order information about the objective function and does not need any problem-specific parametrization. We prove convergence and give convergence rates for smooth objectives assuming that the one-dimensional optimization can be solved exactly or approximately by an oracle. A convenient property of Random Pursuit is its invariance under strictly monotone transformations of the objective function. It thus enjoys identical convergence behavior on a wider function class. To support the theoretical results we present extensive numerical performance results of Random Pursuit, two gradient-free algorithms recently proposed by Nesterov, and a classical adaptive step-size random search scheme. We also present an accelerated heuristic version of the Random Pursuit algorithm which significantly improves standard Random Pursuit on all numerical benchmark problems. A general comparison of the experimental results reveals that (i) standard Random Pursuit is effective on strongly convex functions with moderate condition number, and (ii) the accelerated scheme is comparable to Nesterov's fast gradient method and outperforms adaptive step-size strategies.    
\end{abstract}

\begin{keywords} 
continuous optimization, convex optimization, randomized algorithm, line search
\end{keywords}

\begin{AMS}
90C25, 90C56, 68W20, 62L10
\end{AMS}

\pagestyle{myheadings}
\thispagestyle{plain}
\markboth{S.~U. STICH, C.~L. M\"{U}LLER AND B. G\"{A}RTNER}{RANDOM PURSUIT}

\section{Introduction}
\label{sec:intro}
Randomized zeroth-order optimization schemes were among the first algorithms proposed to numerically solve unconstrained optimization problems~\cite{anderson:53,Brooks:1958,rastrigin:63}. These methods are usually easy to implement, do not require gradient or Hessian information about the objective function, and comprise a randomized mechanism to iteratively generate new candidate solutions. In many areas of modern science and engineering such methods are indispensable in the simulation (or black-box) optimization context, where higher-order information about the simulation output is not available or does not exist. Compared to deterministic zeroth-order algorithms such as \emph{direct search} methods \cite{Kolda:2003} or interpolation methods \cite{conn:09} randomized schemes often show faster and more robust performance on ill-conditioned benchmark problems \cite{Auger:2009} and certain real-world applications such as quantum control \cite{Brif:2010} and parameter estimation in systems biology networks \cite{Sun:2012}. While probabilistic convergence guarantees even for non-convex objectives are readily available for many randomized algorithms \cite{Zhigljavsky:2008}, provable \emph{convergence rates} are often not known or unrealistically slow. Notable exceptions can be found in the literature on adaptive step size random search (also known as Evolution Strategies) \cite{Beyer:2001, jaeg:05}, on Markov chain methods for volume estimation, rounding, and optimization \cite{Vempala:2010}, and in Nesterov's recent work on complexity bounds for gradient-free convex optimization~\cite{nesterov:randomgradientfree}. 

Although Nesterov's algorithms are termed ``gradient-free" their working mechanism does, in fact, rely on approximate \emph{directional derivatives} that have to be available via a suitable oracle. We here relax this requirement and investigate a true randomized gradient- and derivative-free optimization algorithm: \emph{Random Pursuit} ($\mathcal{RP}_{\mu}$).  The method comprises two very elementary primitives: a random direction generator and an (approximate) line search routine. We establish theoretical performance bounds of this algorithm for the unconstrained convex minimization problem
\begin{align}
 \label{eq:problem}
 \min f(x) \quad \text{subject to} \quad x \in \reals^n \, , 
\end{align}
where $f$ is a smooth convex function. We assume that there is a global minimum and that the curvature of the function $f$ can bounded by a constant. Each iteration of Random Pursuit consists of two steps: A random direction is sampled uniformly at random from the unit sphere. The next iterate is chosen such as to (approximately) minimize the objective function along this direction. This method ranges among the simplest possible optimization schemes as it solely relies on two easy-to-implement primitives: a random direction generator and an (approximate) one-dimensional line search. A convenient feature of the algorithm is that it inherits the invariance under strictly monotone transformations of the objective function from the line search oracle. The algorithm thus enjoys convergence guarantees even for non-convex objective functions that can be transformed into convex objectives via a suitable strictly monotone transformation.  

Although Random Pursuit is fully gradient- and derivative-free, it can still be understood from the perspective of the classical gradient method. The gradient method ($\mathcal{GM}$) is an iterative algorithm where the current approximate solution $x_k \in \reals^n$ is improved along the direction of the negative gradient with some step size $\lambda_k$: 
\begin{align}
\label{eq:GM}
x_{k+1} = x_k + \lambda_k (-\nabla f(x_k)) \, .
\end{align}
When the descent direction is replaced by a random vector the generic scheme reads
\begin{align}
\label{eq:random}
x_{k+1} = x_k + \lambda_k u \, ,
\end{align}
where $u$ is a random vector distributed uniformly over the unit
sphere. A crucial aspect of the
performance of this randomized scheme is the determination of the step
size. Rastrigin~\cite{rastrigin:63} studied the convergence of this
scheme on quadratic functions for fixed step sizes $\lambda_k$ where
only improving steps are accepted. Many authors observed that variable
step size methods yield faster convergence
\cite{maybach:66,karnopp:63}. Schumer and Steiglitz~\cite{schumer:68}
were among the first to develop an effective step size adaptation rule
which is based on the maximization of the expected one-step progress
on the sphere function. A similar analysis has been independently
obtained by Rechenberg for the (1+1)-Evolution Strategy
($\mathcal{ES}$) \cite{Rechenberg:1973}. Mutseniyeks and Rastrigin
proposed to choose the step size such as to minimize the function
value along the random direction \cite{rastrigin:64}. This algorithm
is identical to Random Pursuit with an \emph{exact} line search.
Convergence analyses on strongly convex functions have been provided
by Krutikov~\cite{krutikov:83} and Rappl~\cite{rappl:89}. Rappl proved
linear convergence of $\mathcal{RP}_{\mu}$ without giving exact convergence
rates. Krutikov showed linear convergence in the special case where
the search directions are given by $n$ linearly independent vectors
which are used in cyclic order.

Karmanov~\cite{Karmanov:1974b,karmanov:74,Ziel:83} already conducted
an analysis of Random Pursuit on general convex functions. Thus far,
Karmanov's work has not been recognized by the optimization community
but his results are very close to the work presented here. We enhance
Karmanov's results in a number of ways: (i) we prove expected
convergence rates also under \emph{approximate} line search; (ii) we
show that continuous sampling from the unit sphere can be replaced
with discrete sampling from the set $\{\pm e_i: i=1,\ldots,n\}$ of
signed unit vectors, without changing the expected convergence rates;
(iii) we provide a large number of experimental results, showing that
Random Pursuit is a competitive algorithm in practice; (iv) we
introduce a heuristic improvement of Random Pursuit that is even
faster on all our benchmark functions; (v) we point out that Random
Pursuit can also be applied to a number of relevant non-convex
functions, without sacrificing any theoretical and practical
performance guarantees. On the other hand, while we prove fast
convergence only in expectation, Karmanov's more intricate analysis also
yields fast convergence with high probability.
 
Polyak ~\cite{polyak:87} describes step size rules for the
closely related randomized gradient descent scheme:
\begin{align}
\label{eq:random2}
x_{k+1} = x_k + \lambda_k \frac{f(x_k + \mu_k u) - f(x_k)}{\mu_k} u \, ,
\end{align}
where convergence is proved for $\mu_k \to 0$ but no convergence rates are established. Nesterov~\cite{nesterov:randomgradientfree} studied different variants of method~\eqref{eq:random2} and its accelerated versions for smooth and non-smooth optimization problems. He showed that scheme~\eqref{eq:random2} is at most $O(n^2)$ times slower than the standard (sub-)gradient method. The use of exact directional derivatives reduces the gap further to $O(n)$. For smooth problems the method is only $O(n)$ slower than the standard gradient method and accelerated versions are $O(n^2)$ slower than fast gradient methods. 

Kleiner et al.~\cite{kleiner:rcp} studied a variant of algorithm~\eqref{eq:random} for unconstrained semidefinite programming: Random Conic Pursuit. There, each iteration comprises two steps: (i) the algorithm samples a rank-one matrix (not necessarily uniformly) at random; (ii) a two-dimensional optimization problem is solved that consists of finding the optimal linear combination of the rank-one matrix and the current semidefinite matrix. The solution determines the next iterate of the algorithm. In the case of trace-constrained semidefinite problems only a one-dimensional line search is necessary. Kleiner and co-workers proved convergence of this algorithm when directions are chosen uniformly at random. The dependency between convergence rate and dimension are, however, not known. Nonetheless, their work greatly inspired our own efforts which is also reflected in the name ``Random Pursuit" for the algorithm under study.

The present article is structured as follows. In Section \ref{sec:RP} we present the Random Pursuit algorithm with approximate line search. We introduce the necessary notation and formulate the assumptions on the objective function. In Section \ref{sec:expectation} we derive a number of useful results on the expectation of scaled random vectors. In Section \ref{sec:generic} we calculate the expected one-step progress of Random Pursuit with approximate line search ($\mathcal{RP}_\mu$). We show that (besides some additive error term) this progress is by a factor of $O(n)$ worse than the one-step progress of the gradient method. These results allow us to derive the final convergence results in Section \ref{sec:convergence}. We show that $\mathcal{RP}_{\mu}$ meets the convergence rates of the standard gradient method up to a factor of $O(n)$, i.e., linear convergence on strongly convex functions and convergence rate $1/k$ for general convex functions. The linear convergence on strongly convex functions is best possible: For the sphere function our method meets the lower bound~\cite{jaeg:07}. 
For strongly convex objective functions the method is robust against small absolute or relative errors in the line search. In Section \ref{sec:experiments} we present numerical experiments on selected test problems. We compare $\mathcal{RP}_\mu$ with a fixed step size gradient method and a gradient scheme with line search, Nesterov's random gradient scheme and its accelerated version~\cite{nesterov:randomgradientfree}, an adaptive step size random search, and an accelerated heuristic version of $\mathcal{RP}_\mu$. In Section \ref{sec:concl} we discuss the theoretical and numerical results as well as the present limitations of the scheme that may be alleviated by more elaborate randomization primitives. We also provide a number of promising future research directions.


\section{The Random Pursuit (RP) Algorithm}
\label{sec:RP}
We consider problem~\eqref{eq:problem} where $f$ is a differentiable convex function with bounded curvature (to be defined below). 
The algorithm $\mathcal{RP}_{\mu}$ is a variant of scheme~\eqref{eq:random} where the step sizes are determined by a line search. 
Formally, we define the following oracles:
\begin{definition}[Line search oracle]
\label{def:ls}
For $x\in \reals^n$, a convex function $f$, and a direction $u \in S^{n-1} = \{y \in \reals^n \colon \norm{y}_2=1\}$, a function $\ls \colon \reals^n \times S^{n-1} \to \reals$ with
\begin{align}
\ls(x,u) \in \displaystyle \arg \min_{h \in \reals} f(x + hu) \label{def:exactlinesearch} 
\end{align}
is called an \emph{exact line search oracle}. (Here, the $\arg\min$ is
not assumed to be unique, so we consider it as a set from which
$\ls(x,u)$ selects a 
well-defined element.)
For accuracy $\mu \geq 0$ the functions $\lsrel{\mu}$ and $\lsabs{\mu}$ with
\begin{align}
\ls(x,u) - \mu \leq \lsabs{\mu}(x,u) \leq \ls(x,u) + \mu, \quad \text{and}  \label{def:abslinesreach}, \\
s \cdot \max\{0,(1-\mu)\} \cdot \ls(x,u) \leq s \cdot \lsrel{\mu}(x,u) \leq s \cdot \ls(x,u), \label{def:rellinesearch}
\end{align}
where $s= \sign(\ls(x,u))$, are, respectively, \emph{absolute} and \emph{relative}, \emph{approximate line search oracles}. 
By $\lsapprox_\mu$, we denote any of the two. 
\end{definition}

This means that we allow an inexact line search to return a value $\tilde h$
close to an optimal value $h^*=\ls(x,u)$. To simplify subsequent calculations,
we also require that $\tilde h\leq h^*$ in the case of relative approximation (cf.~\eqref{def:rellinesearch}),
but this requirement is not essential. As the optimization problem~\eqref{def:exactlinesearch} cannot be solved exactly in most cases, we will describe and analyze our algorithm by means of the two latter approximation routines. 

The formal definition of algorithm $\mathcal{RP}_{\mu}$ is shown in Algorithm~\ref{alg:rp}. 
At iteration $k$ of the algorithm a direction $u \in S^{n-1}$ is chosen uniformly at random and the next iterate $x_{k+1}$ is calculated from the current iterate $x_k$ as 
\begin{align}
x_{k+1} := x_k + \lsapprox_\mu(x_k,u) \cdot u. \label{eq:3}
\end{align}

\renewcommand{\algorithmicrequire}{\textbf{Input:}}
\renewcommand{\algorithmicensure}{\textbf{Output:}}
\algsetup{indent=2em}
\begin{algorithm}[htb]
\small
\caption{Random Pursuit ($\mathcal{RP}_\mu$)}
\label{alg:rp}
%
\begin{algorithmic}[1]
\REQUIRE $\begin{array}{ll} %
                 \text{A problem of the form~\eqref{eq:problem}} &
                 N \in \naturals : \text{number of iterations} \\
                 x_0: \text{an initial iterate}  & \mu > 0: \text{line search accuracy}%
          \end{array}$
\ENSURE Approximate solution $x_N$ to \eqref{eq:problem}.
\medskip
\FOR {$k \leftarrow 0$ to $N-1$}
\STATE choose $u_k$ uniformly at random from $S^{n-1}$
\STATE Set $x_{k+1} \leftarrow x_k + \lsapprox_\mu(x_k,u_k) \cdot u_k$
\ENDFOR
\RETURN $x_N$
\end{algorithmic}
\end{algorithm}
This algorithm only requires function evaluations if the line search $\lsapprox_\mu$ is implemented appropriately (see \cite{Boef:2007} and references therein). 
No additional first or second-order information of the objective is needed. 
Note also that besides the starting point no further input parameters describing function properties (e.g. curvature constant, etc.) are necessary. 
The actual run time will, however, depend on the specific properties of the objective function.

\subsection{Discrete Sampling} 
\label{subsec:discSampl}
As our analysis below reveals, the random vector $u_k$ enters the analysis only in terms of expectations of the form $\expect[\lin{x,u_k}u_k]$ and $\expect[\norm{\lin{x,u_k}u_k}^2]$. In Lemmas~\ref{lemma:asphere}
and~\ref{lemma:aunit} we show that these expectations are the same for $u_k\sim S^{n-1}$ and $u_k\sim \{\pm e_i:i=1,\ldots,n\}$, the set of signed unit vectors (here and in the following, the notation $x \sim A$ for a set $A$, denotes that $x$ is distributed according to the uniform distribution on $A$). It follows that continuous sampling from
$S^{n-1}$ can be replaced with discrete sampling from $\{\pm
e_i:i=1,\ldots,n\}$ without affecting our guarantees on the expected
runtime. Under this modification, fast convergence still holds with
high probability, but the bounds get worse~\cite{karmanov:74}.

\subsection{Quasiconvex Functions} 
If $f$ and $g$ are functions, $g$ is called a \emph{strictly monotone transformation} of $f$ if
\[f(x)<f(y)~~\Leftrightarrow~~ g(f(x))<g(f(y)), \quad x,y\in\reals^n.\] This implies that the distribution of $x_k$ in $\mathcal{RP}_{\mu}$ is the same for the function $f$ and the function
$g\circ f$, if $g$ is a strictly monotone transformation of $f$. This follows from the fact that the result of the line search given in Definition \ref{def:ls} is invariant under strictly monotone transformations.

This observation allows us to run $\mathcal{RP}_{\mu}$ on any strictly
monotone transformation of any convex function $f$, with the same
theoretical performance as on $f$ itself. 
The functions obtainable in this way form a subclass of the class of \emph{quasiconvex functions}, and they include some non-convex functions as well. In
Section~\ref{sec:funnel} we will experimentally verify the invariance of $\mathcal{RP}_{\mu}$ under strictly monotone transformations on one instance of a quasiconvex function.

\subsection{Function Basics}
We now introduce some important inequalities that are useful for the subsequent analysis. 
We always assume that the objective function is differentiable and convex. 
The latter property is equivalent to
\begin{align}
\label{def:convex1}
f(y) \geq f(x) + \lin{\nabla f(x), y-x}, \quad x, y \in \reals^n \, .
\end{align} 
We also require that the curvature of $f$ is bounded. 
By this we mean that for some constant $L_1$, 
\begin{align}
\label{eq:lipschitz}
f(y)-f(x) - \lin{\nabla f(x), y-x} \leq \frac{1}{2} L_1 \norm{x-y}^2, \quad x,y \in \reals^n \, .
\end{align}
We will also refer to this inequality as the \emph{quadratic upper
  bound}. It means that the deviation of $f$ from any of its linear
approximations can be bounded by a quadratic function.
We denote by $C^1_{L_1}$ the class of differentiable and convex functions for with the quadratic upper bound holds with the constant $L_1$.

A differentiable function is \emph{strongly convex} with parameter $m$ if the \emph{quadratic lower bound}
\begin{align}
\label{def:stronconvex}
f(y) - f(x) - \lin{\nabla f(x), y-x} \geq \frac{m}{2} \norm{y-x}^2 \, , \quad x,y \in \reals^n \, ,
\end{align}
holds. 
Let $x^*$ be the unique minimizer of a strongly convex function $f$ with parameter $m$. 
Then equation~\eqref{def:stronconvex} implies this useful relation:
\begin{align}
\label{eq:quadraticlower}
\frac{m}{2} \norm{x-x^*}^2 \leq f(x) - f(x^*) \leq \frac{1}{2m}\|\nabla f(x)\|^2 \, , \quad \forall x \in \reals^n \, .
\end{align}
The former inequality uses $\nabla f(x^*)=0$, and the latter one follows from (\ref{def:stronconvex}) via
\begin{eqnarray*}
f(x^*) &\geq& f(x) + \lin{\nabla f(x), x^*-x} + \frac{m}{2} \norm{x^*-x}^2 \\
&\geq& f(x)+\min_{y\in\reals^n}\left(\lin{\nabla f(x), y-x} + \frac{m}{2} \norm{y-x}^2\right) = f(x) -\frac{1}{2m}\|\nabla f(x)\|^2
\end{eqnarray*}
by standard calculus.
\section{Expectation of Scaled Random Vectors}
\label{sec:expectation}
We now study the projection of a fixed vector $x$ onto a random vector $u$. 
This will help analyze the expected progress of Algorithm~\ref{alg:rp}. 
We start with the case $u \sim \normal(0,I_n)$ and 
then extend it to $u\sim S^{n-1}$.
Throughout this section, let $x \in \reals^n$ be a fixed vector and $u \in \reals^n$ a random vector drawn according to some distribution.
We will need the following facts about the moments of the standard normal distribution.
\begin{lemma}\label{lemma:normalbasic}
\begin{itemize}
\item[(i)] Let $\nu \sim \normal(0,1)$ be drawn from the standard normal distribution over the reals. Then 
\begin{align*}
\expect[\nu] &= \expect[\nu^3] = 0 \, , & \expect[\nu^2]&=1 \, , & \expect[\nu^4]&=3 \, .
\end{align*}
\item[(ii)] Let $u\sim\normal(0,I_n)$ be drawn from the standard normal distribution over $\reals^n$. Then
\begin{align*}
\expect_u[uu^T] &= I_n \, , & \expect_u[(u^Tu)uu^T]&= (n+2)I_n \, .
\end{align*}
\end{itemize}
\end{lemma}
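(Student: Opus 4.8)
The plan is to reduce everything to one-dimensional Gaussian moment computations. First I would establish part~(i). The odd moments $\expect[\nu]$ and $\expect[\nu^3]$ vanish because the density $\varphi(\nu) = (2\pi)^{-1/2} e^{-\nu^2/2}$ is even, so $\nu\varphi(\nu)$ and $\nu^3\varphi(\nu)$ are odd and integrate to zero. The identity $\expect[\nu^2] = 1$ is simply the variance of a standard normal (together with $\expect[\nu]=0$). For $\expect[\nu^4]$ the cleanest route is Gaussian integration by parts: since $\varphi'(\nu) = -\nu\varphi(\nu)$, writing $\nu^4\varphi(\nu) = -\nu^3\varphi'(\nu)$ and integrating by parts yields $\expect[\nu^4] = 3\expect[\nu^2] = 3$, the boundary terms vanishing because $\varphi$ decays faster than any polynomial. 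Equivalently one can read the moments off the moment-generating function $\expect[e^{t\nu}] = e^{t^2/2}$.

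Next I would deduce part~(ii) by writing $u = (u_1,\dots,u_n)^T$ with the $u_i$ independent standard normals. For the first identity, the $(i,j)$ entry of $uu^T$ is $u_iu_j$, and by independence together with part~(i) we get $\expect[u_iu_j] = \expect[u_i]\expect[u_j] = 0$ for $i\neq j$ and $\expect[u_i^2] = 1$ for $i=j$; hence $\expect[uu^T] = I_n$.

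For the second identity, the $(i,j)$ entry of $(u^Tu)uu^T$ is $\bigl(\sum_{k=1}^n u_k^2\bigr)u_iu_j$, and I would split into cases. When $i\neq j$, every term $u_k^2u_iu_j$ contains some variable raised to an odd power (at least $u_j$, and also $u_i$ when $k\neq i$), so its expectation is zero by independence and the vanishing of odd Gaussian moments; thus all off-diagonal entries vanish. When $i=j$, the entry equals $\sum_{k=1}^n u_k^2u_i^2$: the term $k=i$ contributes $\expect[u_i^4] = 3$ and each of the $n-1$ terms with $k\neq i$ contributes $\expect[u_k^2]\expect[u_i^2] = 1$, for a total of $3 + (n-1) = n+2$. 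Hence $\expect[(u^Tu)uu^T] = (n+2)I_n$.

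There is no genuine obstacle here; the only points requiring a modicum of care are justifying the integration by parts (or, alternatively, just citing a standard table of Gaussian moments) and bookkeeping in the multivariate step which monomials survive after taking expectations.
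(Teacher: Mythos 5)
Your proof is correct and follows essentially the same route as the paper, which also treats part~(i) as standard and derives part~(ii) entrywise from the identities $(uu^T)_{ij}=u_iu_j$ and $\bigl((u^Tu)uu^T\bigr)_{ij}=u_iu_j\sum_k u_k^2$. You merely fill in the case analysis and the one-dimensional moment computations that the paper leaves implicit.
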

{\em Proof}.
Part (i) is standard, and the latter two matrix equations easily
follow from (i) via
\begin{align*}
(uu^T)_{ij} &= u_iu_j\, , & \left((u^Tu)uu^T\right)_{ij} &= u_iu_j\sum_{k}u_k^2 \, . \qquad \endproof
\end{align*}

\begin{lemma}[Normal distribution]
\label{lemma:anormal}
Let $u \sim \normal(0, I_n)$. Then
\begin{align*}
\expect_u \left[ \lin{x,u} u \right] = x \, , \quad &\text{and} \quad \expect_u \left[ \norm{\lin{x,u} u}^2 \right] = (n+2) \norm{x}^2 \, .
\end{align*}
\end{lemma}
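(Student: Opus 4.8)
The plan is to reduce both identities to the matrix moment formulas of Lemma~\ref{lemma:normalbasic}(ii) by rewriting the scaled projection $\lin{x,u}u$ in terms of the rank-one matrix $uu^T$. The key algebraic observation is that $\lin{x,u}u = (u^Tx)u = (uu^T)x$, which turns the expectation $\expect_u[\lin{x,u}u]$ into $\expect_u[uu^T]\,x$. Since $x$ is a fixed (deterministic) vector, the expectation passes through to the matrix, and Lemma~\ref{lemma:normalbasic}(ii) gives $\expect_u[uu^T] = I_n$, so the first claim follows immediately as $I_n x = x$.

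For the second identity I would again start from $\lin{x,u} = u^Tx$, so that $\norm{\lin{x,u}u}^2 = \lin{x,u}^2\norm{u}^2 = (x^Tu)(u^Tx)(u^Tu)$. Rearranging the scalar factors, this equals $x^T\bigl((u^Tu)\,uu^T\bigr)x$. Taking expectations and using linearity together with the determinism of $x$, we get $\expect_u[\norm{\lin{x,u}u}^2] = x^T\,\expect_u[(u^Tu)uu^T]\,x$, and the second formula of Lemma~\ref{lemma:normalbasic}(ii) yields $x^T(n+2)I_n\,x = (n+2)\norm{x}^2$.

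There is essentially no hard part here: the whole argument is bookkeeping that converts quadratic/quartic scalar expressions in $u$ into the two matrix moments already established. The only point requiring a little care is the correct grouping of the scalar factors $u^Tx$, $x^Tu$, and $u^Tu$ so that the expression is genuinely of the form $x^TM(u)x$ with $M(u) = (u^Tu)uu^T$; once that is written down, the result is a direct substitution.
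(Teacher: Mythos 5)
Your proposal is correct and follows essentially the same route as the paper: both reduce $\lin{x,u}u$ to $(uu^T)x$ and $\norm{\lin{x,u}u}^2$ to $x^T\bigl((u^Tu)uu^T\bigr)x$, then apply the two matrix moment identities of Lemma~\ref{lemma:normalbasic}(ii). Nothing is missing.
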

\begin{proof}
We calculate
\begin{align*}
\expect_u \left[ \lin{x,u} u \right] = \expect_u[uu^Tx]
= \expect_u[uu^T]x = x,
\end{align*}
by Lemma~\ref{lemma:normalbasic}(ii). For the second moment we get
\begin{align*}
\expect_u \left[\norm{\lin{x,u} u}^2\right] = \expect_u[x^T(u^Tu)uu^Tx]
= x^T\expect_u[(u^Tu)uu^T]x = (n+2)\norm{x}^2 \, ,
\end{align*}
again using Lemma~\ref{lemma:normalbasic}(ii). 
\end{proof}
\medskip

\begin{lemma}[Spherical distribution]
\label{lemma:asphere}
Let $u\sim S^{n-1}$. Then
\begin{align*}
\expect_u \left[ \lin{x,u} u \right] = \frac{1}{n} x \, , \quad &\text{and} \quad \expect_u \left[ \norm{\lin{x,u}u}^2 \right] =\expect_u \left[\lin{x,u}^2 \right]= \frac{1}{n} \norm{x}^2 \, .
\end{align*}
\end{lemma}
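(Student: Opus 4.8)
The plan is to reduce the spherical case to the already-established Gaussian case (Lemma~\ref{lemma:anormal}) via the standard representation of the uniform distribution on $S^{n-1}$ as a normalized Gaussian. Concretely, let $g\sim\normal(0,I_n)$; then $u:=g/\norm{g}$ is distributed uniformly on $S^{n-1}$, and --- crucially --- the direction $u$ is stochastically independent of the radius $\norm{g}$. Writing $g=\norm{g}\,u$ gives $\lin{x,g}g=\norm{g}^2\lin{x,u}u$ and, since $\norm{u}=1$, $\norm{\lin{x,g}g}^2=\norm{g}^4\lin{x,u}^2$.

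First I would record the two radial moments needed: since $\norm{g}^2\sim\chi_n^2$ we have $\expect[\norm{g}^2]=n$ and $\expect[\norm{g}^4]=\variance[\norm{g}^2]+(\expect[\norm{g}^2])^2=2n+n^2=n(n+2)$. Then, applying Lemma~\ref{lemma:anormal} together with the independence of $u$ and $\norm{g}$: $x=\expect_g[\lin{x,g}g]=\expect[\norm{g}^2]\,\expect_u[\lin{x,u}u]=n\,\expect_u[\lin{x,u}u]$, hence $\expect_u[\lin{x,u}u]=x/n$. Similarly $(n+2)\norm{x}^2=\expect_g[\norm{\lin{x,g}g}^2]=\expect[\norm{g}^4]\,\expect_u[\lin{x,u}^2]=n(n+2)\,\expect_u[\lin{x,u}^2]$, which gives $\expect_u[\lin{x,u}^2]=\norm{x}^2/n$; and $\expect_u[\norm{\lin{x,u}u}^2]=\expect_u[\lin{x,u}^2]$ follows immediately from $\norm{u}=1$.

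The main obstacle --- really the only nontrivial point --- is justifying the independence of $g/\norm{g}$ and $\norm{g}$. This is the rotational-invariance property of the isotropic Gaussian: for every orthogonal $Q$ one has $Qg\stackrel{d}{=}g$, so the law of $g/\norm{g}$ is rotation-invariant (hence it is the unique uniform distribution on $S^{n-1}$) and is unchanged upon conditioning on $\norm{g}$; a clean way to see this is to pass to polar coordinates, where the density of $g$ factors into a function of $\norm{g}$ times the (constant) surface measure on $S^{n-1}$.

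As an alternative that sidesteps the Gaussian entirely, one can argue by symmetry directly on the sphere: $\expect_u[\lin{x,u}u]$ must be parallel to $x$, say equal to $c\,x$; taking the inner product with $x$ gives $c\norm{x}^2=\expect_u[\lin{x,u}^2]$, and since $\sum_{i=1}^n\expect[u_i^2]=\expect[\norm{u}^2]=1$ with all $\expect[u_i^2]$ equal by coordinate symmetry, one gets $\expect[u_i^2]=1/n$, hence $c=1/n$ and $\expect_u[\lin{x,u}^2]=\norm{x}^2/n$, after which the second-moment claim is again immediate from $\norm{u}=1$. I would present the Gaussian-based argument as the main proof to keep continuity with Lemma~\ref{lemma:anormal}, and perhaps remark on the symmetry argument.
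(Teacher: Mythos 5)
Your proposal is correct and follows essentially the same route as the paper: reduce to the Gaussian case via $u=g/\norm{g}$ and exploit the independence of the direction and the radius (the paper cites this fact and packages it as the single matrix identity $\expect_u[uu^T]=I_n/n$, from which both claims follow, whereas you invert the two identities of Lemma~\ref{lemma:anormal} directly, which additionally requires $\expect[\norm{g}^4]=n(n+2)$ --- a harmless difference). Your supplementary symmetry argument is also valid.
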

{\em Proof}.
Let $v \sim \normal(0, I_n)$. We observe that the random vector $w=v/\norm{v}$ has the same distribution as $u$. In particular,
\begin{equation}\label{eq:uv}
\expect_u \left[uu^T\right] = \expect_v \left[ \frac{vv^T}{\norm{v}^2} \right]
= \frac{ \expect_v \left[vv^T \right]}{ \expect_v \left[ \norm{v}^2 \right]}
= \frac{I_n}{n} \, ,
\end{equation}
where we have used that the two random variables $\frac{vv^T}{\norm{v}^2}$ and $\norm{v}^2$ are independent (see~\cite{heijmans:ratio}), along with 
\[
\expect_v\left[vv^T\right]=I_n, \quad \expect_v \left[ \norm{v}^2 \right] = n \, ,
\]
a consequence of Lemma~\ref{lemma:normalbasic}.
Now we use (\ref{eq:uv}) to compute
\begin{align*}
\expect_u \left[ \lin{x,u}  u \right] = \expect_u \left[uu^T\right]x
= \frac{I_n}{n}x = \frac{1}{n}x 
\end{align*}
and
\begin{align*}
\expect_u \left[\lin{x,u}^2 \right]
&= \expect_u \left[x^Tuu^Tx\right] = x^T\expect_u \left[uu^T\right]x =
x^T\frac{I_n}{n}x = \frac{1}{n} \norm{x}^2 \, . \qquad\endproof
\end{align*}

The same result can be derived when the vector $u$ is chosen to be a
random signed unit vector.
\begin{lemma}
\label{lemma:aunit}
Let $u\sim U := \{\pm e_i: i=1,\ldots, n\}$ where $e_i$ denotes
  the $i$-th standard unit vector in $\reals^n$. Then
\begin{align*}
\expect_u \left[ \lin{x,u} u \right] = \frac{1}{n} x \, , \quad &\text{and} \quad \expect_u \left[ \norm{\lin{x,u} u}^2 \right] = \expect_u \left[\lin{x,u}^2 \right] = \frac{1}{n} \norm{x}^2.
\end{align*}
\end{lemma}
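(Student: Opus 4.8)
The plan is to follow the same route as the proof of Lemma~\ref{lemma:asphere}: observe that both quantities of interest are determined by the second-moment matrix $\expect_u[uu^T]$, compute that matrix directly from the finite uniform distribution on $U$, and then read off the two identities exactly as before.

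First I would note that $U$ consists of precisely $2n$ vectors, each occurring with probability $1/(2n)$, and that for every index $i$ the two contributions $(+e_i)(+e_i)^T$ and $(-e_i)(-e_i)^T$ are both equal to $e_ie_i^T$, since the sign cancels in the outer product. Hence
\begin{equation*}
\expect_u[uu^T] = \frac{1}{2n}\sum_{i=1}^n\left(e_ie_i^T + e_ie_i^T\right) = \frac{1}{n}\sum_{i=1}^n e_ie_i^T = \frac{1}{n} I_n \, ,
\end{equation*}
which is exactly the matrix that appeared in~\eqref{eq:uv}.

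Then, mirroring Lemma~\ref{lemma:asphere} verbatim, $\expect_u[\lin{x,u}u] = \expect_u[uu^T]x = \tfrac{1}{n}x$ and $\expect_u[\lin{x,u}^2] = x^T\expect_u[uu^T]x = \tfrac{1}{n}\norm{x}^2$. The remaining equality $\expect_u[\norm{\lin{x,u}u}^2] = \expect_u[\lin{x,u}^2]$ holds pointwise: every $u\in U$ is a unit vector, so $\norm{\lin{x,u}u}^2 = \lin{x,u}^2\norm{u}^2 = \lin{x,u}^2$.

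There is essentially no obstacle here; the only point worth a moment's thought is the sign cancellation in the rank-one matrices $uu^T$, which is what makes the signed unit vectors behave like the spherical sample for these second-order quantities. As an even more direct alternative for the first identity, one can observe that $\lin{x,\pm e_i}(\pm e_i) = x_i e_i$ independently of the sign, so averaging over $U$ immediately gives $\tfrac1n\sum_{i=1}^n x_i e_i = \tfrac1n x$.
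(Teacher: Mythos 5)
Your proof is correct and is essentially the paper's own argument: a direct average over the $2n$ equiprobable vectors of $U$, using the sign cancellation $\lin{x,\pm e_i}(\pm e_i)=x_ie_i$ and the fact that each $u\in U$ is a unit vector. Packaging the computation as $\expect_u[uu^T]=\tfrac{1}{n}I_n$ first is only a cosmetic reorganization of the same finite sum, and your closing remark reproduces the paper's proof verbatim.
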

{\em Proof}.
We calculate
\begin{align*}
\expect_u \left[ \lin{x,u}  u \right] = \frac{1}{2n} \sum_{u \in U} \lin{x, u}  u = \frac{1}{n} \sum_{i=1}^n x_i e_i = \frac{1}{n} x \, ,
\end{align*}
and similarly
\begin{align*}
\expect_u \left[ \lin{x,u}^2 \right] = \frac{1}{2n} \sum_{u \in U}\lin{x, u}^2 = \frac{1}{n} \sum_{i=1}^n x_i^2 = \frac{1}{n} \norm{x}^2 \, . \qquad \endproof
\end{align*}

\section{Single Step Progress}
\label{sec:generic}
To prepare the convergence proof of Algorithm~$\mathcal{RP}_\mu$
in the next section, we study the expected progress in a single step,
which is the quantity
\[
\expect \left[ f(x_{k+1}) \mid x_k \right] \, .
\]
It turns out that we need to proceed differently, depending on whether
the function under consideration is strongly convex (the easier case)
or not. We start with a preparatory lemma for both cases.  We first
analyze the case when an approximate line search with absolute error
is applied. Using an approximate line search with relative error will
be reduced to the case of an exact line search.

\subsection{Line Search with Absolute Error}
\begin{tiny}
~
\end{tiny}
\begin{lemma}[Absolute Error]
\label{lemma:onestepabs}
Let $f \in C^1_{L_1}$ and let $x_k \in \reals^n$ be the current iterate and $x_{k+1} \in \reals^n$
the next iterate generated by algorithm $\mathcal{RP}_\mu$ with
absolute line search accuracy $\mu$. For every positive $h\in\reals$ and
every point $z \in \reals^n$ we have
\begin{align*}
\expect \left[ f(x_{k+1}) \mid x_k \right] %
& \leq f(x_k) + \frac{h}{n} \lin{\nabla f(x_k),z-x_k}  + \frac{L_1 h^2}{2n} \norm{z-x_k}^2 + \frac{L_1 \mu^2}{2} \, .
\end{align*}
\end{lemma}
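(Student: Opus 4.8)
The plan is to fix the random direction $u\in S^{n-1}$, bound $f(x_{k+1})$ deterministically as a function of $u$, and take the conditional expectation only at the very end, using Lemma~\ref{lemma:asphere}. Write $h^*=\ls(x_k,u)$ for the exact optimal step and $\tilde h=\lsabs{\mu}(x_k,u)$ for the step actually taken, so that $x_{k+1}=x_k+\tilde h u$ and $\abs{\tilde h-h^*}\le\mu$ by~\eqref{def:abslinesreach}.

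\textbf{First, absorb the line-search error.} Since $h^*$ minimizes the differentiable convex function $t\mapsto f(x_k+tu)$, first-order optimality gives $\lin{\nabla f(x_k+h^*u),u}=0$. Applying the quadratic upper bound~\eqref{eq:lipschitz} with base point $x_k+h^*u$ and using $\norm{u}=1$ yields
\[
f(x_{k+1})=f\bigl(x_k+h^*u+(\tilde h-h^*)u\bigr)\le f(x_k+h^*u)+\tfrac{L_1}{2}(\tilde h-h^*)^2\le f(x_k+h^*u)+\tfrac{L_1\mu^2}{2},
\]
the linear term dropping out by optimality. Taking $\expect[\,\cdot\mid x_k]$ peels off the additive $\tfrac{L_1\mu^2}{2}$ of the claim and reduces everything to bounding $\expect[f(x_k+h^*u)\mid x_k]$, i.e.\ the exact-line-search case.

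\textbf{Next, compare with a convenient test point and take expectations.} Optimality of $h^*$ along $u$ means $f(x_k+h^*u)\le f(x_k+tu)$ for \emph{every} scalar $t$; the useful choice is $t=h\lin{z-x_k,u}$. Applying the quadratic upper bound once more, now with base point $x_k$, gives
\[
f(x_k+h^*u)\le f(x_k)+h\lin{z-x_k,u}\lin{\nabla f(x_k),u}+\tfrac{L_1h^2}{2}\lin{z-x_k,u}^2 .
\]
Now take $\expect[\,\cdot\mid x_k]$ over $u\sim S^{n-1}$ and invoke $\expect_u[uu^T]=\tfrac1n I_n$ from Lemma~\ref{lemma:asphere}: this turns $\expect_u\bigl[\lin{z-x_k,u}\lin{\nabla f(x_k),u}\bigr]$ into $\tfrac1n\lin{\nabla f(x_k),z-x_k}$ and $\expect_u\bigl[\lin{z-x_k,u}^2\bigr]$ into $\tfrac1n\norm{z-x_k}^2$, producing $f(x_k)+\tfrac{h}{n}\lin{\nabla f(x_k),z-x_k}+\tfrac{L_1h^2}{2n}\norm{z-x_k}^2$. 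Adding the $\tfrac{L_1\mu^2}{2}$ from the first step gives exactly the asserted bound.

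\textbf{Expected main obstacle.} The only genuinely delicate point is the first-order optimality used in the first step: one must make sure that the element $h^*$ selected from the (possibly non-singleton) $\arg\min$ set is genuinely a stationary point of $t\mapsto f(x_k+tu)$. For a differentiable convex function this is fine, since the set of minimizers along the line is a closed interval on whose entire closure the derivative $t\mapsto\lin{\nabla f(x_k+tu),u}$ vanishes by continuity, and the standing assumption that $f$ has a global minimum rules out an empty line search or unbounded descent. Everything else is two applications of~\eqref{eq:lipschitz} together with the single moment identity of Lemma~\ref{lemma:asphere}.
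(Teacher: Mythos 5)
Your proposal is correct and follows essentially the same route as the paper's proof: two applications of the quadratic upper bound~\eqref{eq:lipschitz} (one at the exact line-search optimum to absorb the $\frac{L_1\mu^2}{2}$ error using first-order optimality along $u$, one at $x_k$ with the test step $t=h\lin{z-x_k,u}$), followed by the moment identities of Lemma~\ref{lemma:asphere}. The only cosmetic difference is that you substitute the specific $t$ before taking expectations while the paper keeps $t_k$ generic until the end; your extra care about selecting a stationary element from a non-singleton $\arg\min$ is a sound addition but not a departure.
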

\begin{proof}
  Let $x'_{k+1}:=x_k+\ls(x_k,u_k)u_k$ be the exact line search
  optimum. Here, $u_k\in S^{n-1}$ is the chosen search direction. By
  definition of the approximate line search~\eqref{def:abslinesreach},
  we have
\begin{align}
f(x_{k+1}) &\leq \displaystyle \max_{|\nu | \leq \mu} f(x'_{k+1} + \nu  u_k) \notag \\ 
&\stackrel{(\ref{eq:lipschitz})}{\leq} f(x'_{k+1}) + \max_{|\nu | \leq \mu} \left[ \underbrace{\lin{\nabla f(x'_{k+1}), \nu u_k}}_{0} + \frac{L_1}{2} \nu^2 \right] \notag \\
&= f(x'_{k+1}) +  \frac{L_1 \mu^2}{2}\, ,
\label{eq:cont1}
\end{align}
where we used the quadratic upper bound~\eqref{eq:lipschitz} in the second inequality with $x=x'_{k+1}$ and $y=x'_{k+1}+\nu  u_k$. 

Since $x'_{k+1}$ is the exact line search optimum, we in particular have
\begin{align}
\label{eq:ls_vs_alpha}
f(x'_{k+1})\leq f(x_k+t_ku_k) \leq f(x_k) + \lin{\nabla f(x_k),t_ku_k}+
\frac{L_1t_k^2}{2} \, , \quad\forall t_k\in\reals,
\end{align}
where we have applied~\eqref{eq:lipschitz} a second time. Putting 
together~\eqref{eq:cont1} and~\eqref{eq:ls_vs_alpha}, and taking 
expectations, we get
\begin{align}
\label{eq:almostdone}
\expect_{u_k} \left[ f(x_{k+1}) \mid x_k \right] %
& \leq f(x_k) + \expect_{u_k} \left[ \lin{\nabla f(x_k),t_ku_k}+\frac{L_1 t_k^2}{2}\mid x_k \right] + \frac{L_1 \mu^2}{2} \, .
\end{align}
Now it is time to choose $t_k$ such that we can control the expectations
on the right-hand side. We set
\[t_k := h\lin{z-x_k,u_k},\]
where $h>0$ and $z\in\reals^n$ are the ``free parameters'' of the lemma.
Via Lemma~\ref{lemma:asphere}, this entails
\[
\expect_{u_k} \left[t_ku_k\right] = \frac{h}{n}(z-x_k) \, , \quad
\expect_{u_k} \left[t_k^2\right] = \frac{h^2}{n}\norm{z-x_k}^2 \, ,
\]
and the lemma follows.
\end{proof}

\subsection{Line Search with Relative Error}
In the case of relative line search error, we can prove a variant of
Lemma~\ref{lemma:onestepabs} with a denominator $n'$ slightly larger
than $n$. As a result, the analysis under relative line search error
reduces to the analysis of exact line search (approximate line search
error $0$) in a slightly higher dimension; in the sequel, we will
therefore only deal with absolute line search error.

\begin{lemma}[Relative Error]
\label{lemma:onesteprel}
Let $f \in C^1_{L_1}$ and let $x_k \in \reals^n$ be the current iterate and $x_{k+1} \in \reals^n$
the next iterate generated by algorithm $\mathcal{RP}_\mu$ with
relative line search accuracy $\mu$. For every positive $h\in\reals$ and
every point $z \in \reals^n$ we have
\begin{align*}
\expect \left[ f(x_{k+1}) \mid x_k \right] %
& \leq f(x_k) + \frac{h}{n'} \lin{\nabla f(x_k),z-x_k}  + \frac{L_1 h^2}{2n'} \norm{z-x_k}^2 \, , 
\end{align*}
where $n'=n/(1-\mu)$.
\end{lemma}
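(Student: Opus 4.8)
The plan is to follow the proof of Lemma~\ref{lemma:onestepabs} almost line for line, replacing only its first two steps --- the quadratic upper bound applied at the exact optimum together with the absolute error (cf.~\eqref{eq:cont1}) and the optimality inequality~\eqref{eq:ls_vs_alpha} --- by a single convexity argument along the search line. This is the one place where the relative-error structure enters: instead of producing an additive term $L_1\mu^2/2$, it merely rescales the one-dimensional progress by a factor $(1-\mu)$, which then gets absorbed into the inflated dimension $n'=n/(1-\mu)$.

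Concretely, I would fix $x_k$ and the chosen direction $u_k\in S^{n-1}$ and set $\phi(t):=f(x_k+tu_k)$, a convex function; write $h^*:=\ls(x_k,u_k)$ for the exact line-search optimum and $\tilde h:=\lsrel{\mu}(x_k,u_k)$, so that $x_{k+1}=x_k+\tilde hu_k$ and $f(x_{k+1})=\phi(\tilde h)$. Assuming $\mu<1$ (for $\mu=1$ one has $n'=\infty$ and the bound degenerates to $\expect[f(x_{k+1})\mid x_k]\le f(x_k)$, which follows at once from $\phi(\tilde h)\le\phi(0)$), condition~\eqref{def:rellinesearch} says exactly that $\tilde h$ lies between $(1-\mu)h^*$ and $h^*$; the sign factor $s=\sign(h^*)$ makes this uniform in the sign of $h^*$, and $h^*=0$ is trivial. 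Since $\phi$ is convex with minimizer $h^*$, it is monotone on the segment joining $0$ to $h^*$, nonincreasing toward $h^*$, and $\tilde h$ lies on that segment no farther from $h^*$ than $(1-\mu)h^*$ does, so $\phi(\tilde h)\le\phi((1-\mu)h^*)$. Then, writing $(1-\mu)h^*=(1-\mu)h^*+\mu\cdot 0$ and applying convexity (Jensen) once more, $\phi((1-\mu)h^*)\le(1-\mu)\phi(h^*)+\mu\phi(0)$. Since $\phi(0)=f(x_k)$ and $\phi(h^*)=\min_{t\in\reals}\phi(t)\le\phi(t_k)$ for every $t_k\in\reals$, bounding $\phi(t_k)=f(x_k+t_ku_k)$ with the quadratic upper bound~\eqref{eq:lipschitz} gives
\begin{equation*}
f(x_{k+1})\;\le\;f(x_k)+(1-\mu)\Bigl(\lin{\nabla f(x_k),t_ku_k}+\tfrac{L_1t_k^2}{2}\Bigr),\qquad\forall\,t_k\in\reals.
\end{equation*}

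From this point the computation is exactly the one in Lemma~\ref{lemma:onestepabs}: I would set $t_k:=h\lin{z-x_k,u_k}$ and take the expectation over $u_k\sim S^{n-1}$, using Lemma~\ref{lemma:asphere} (equivalently $\expect_{u_k}[u_ku_k^T]=\tfrac1n I_n$) to get $\expect_{u_k}[\lin{\nabla f(x_k),t_ku_k}]=\tfrac hn\lin{\nabla f(x_k),z-x_k}$ and $\expect_{u_k}[t_k^2]=\tfrac{h^2}{n}\norm{z-x_k}^2$; carrying the factor $(1-\mu)$ through and using $(1-\mu)/n=1/n'$ yields the stated inequality. The main obstacle --- and the only genuine deviation from Lemma~\ref{lemma:onestepabs} --- is the two-step chain $\phi(\tilde h)\le\phi((1-\mu)h^*)\le(1-\mu)\phi(h^*)+\mu\phi(0)$: the first inequality must be extracted carefully from the precise meaning of~\eqref{def:rellinesearch} (including the sign and the $\max\{0,1-\mu\}$ bookkeeping), and the second is arranged precisely so that the factor $(1-\mu)$ lands on the ``one-dimensional gradient-method'' expression that Lemma~\ref{lemma:onestepabs} already controls.
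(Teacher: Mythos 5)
Your proof is correct and follows essentially the same route as the paper: both arguments reduce the relative-error case to the exact line search by exploiting that $x_{k+1}$ lies on the segment between $x_k$ and the exact optimum $x'_{k+1}$, deriving $f(x_{k+1})\le \mu f(x_k)+(1-\mu)f(x'_{k+1})$ by convexity and then absorbing the factor $(1-\mu)$ into $n'=n/(1-\mu)$. The only cosmetic difference is that you pass through the intermediate point $(1-\mu)h^*$ via monotonicity of $\phi$ before applying Jensen, whereas the paper writes $x_{k+1}=(1-\gamma)x_k+\gamma x'_{k+1}$ with $\gamma\ge 1-\mu$ directly and then uses $f(x'_{k+1})\le f(x_k)$, and the paper invokes Lemma~\ref{lemma:onestepabs} with error $0$ rather than redoing its computation inline.
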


\begin{proof} By the definition (\ref{def:rellinesearch}) of relative
  line search error, $x_{k+1}$ is a convex combination of $x_k$ and
  $x'_{k+1}$, the exact line search optimum. More precisely, we can
  compute that
\[
x_{k+1} = (1-\gamma) x_k + \gamma x'_{k+1} \, , 
\]
where $\gamma\geq 1-\mu$. By convexity of $f$, we thus have 
\[
f(x_{k+1}) \leq (1-\gamma) f(x_k) + \gamma f(x'_{k+1})
\leq \mu f(x_k) + (1-\mu) f(x'_{k+1}) \, ,
\]
since $f(x'_{k+1})\leq f(x_k)$. Hence
\begin{equation}\label{eq:suffdecr}
\expect \left[ f(x_{k+1}) \mid x_k \right] \leq \mu f(x_k) +
(1-\mu) \expect\left[ f(x'_{k+1})\mid x_k\right] \, .
\end{equation}
Using Lemma~\ref{lemma:onestepabs} with absolute line search error $0$ 
yields a bound for the latter term:
\[
\expect \left[ f(x'_{k+1}) \mid x_k \right] \leq f(x_k) + \frac{h}{n}
\lin{\nabla f(x_k),z-x_k} + \frac{L_1 h^2}{2n} \norm{z-x_k}^2 \, .
\] 
Putting this together with (\ref{eq:suffdecr}) yields 
\[
\expect \left[ f(x_{k+1}) \mid x_k \right] \leq f(x_k) + (1-\mu)
\left( \frac{h}{n}
\lin{\nabla f(x_k),z-x_k} + \frac{L_1 h^2}{2n} \norm{z-x_k}^2 \right) \, , 
\]
and with $n'=n/(1-\mu)$, the lemma follows.
\end{proof}

\subsection{Towards the Strongly Convex Case}
Here we use $z=x_k - \nabla f(x_k)$ in Lemma~\ref{lemma:onestepabs}.

\begin{corollary}
\label{cor:onestep_grad}
Let $f \in C^1_{L_1}$ and let $x_k \in \reals^n$ be the current iterate and $x_{k+1} \in \reals^n$ the next iterate generated by algorithm $\mathcal{RP}_\mu$ with absolute line search accuracy $\mu$. For any positive $h_k \leq \frac{1}{L_1}$ it holds that
\begin{align*}
\expect \left[ f(x_{k+1}) \mid x_k \right] %
& \leq f(x_k) - \frac{h_k}{2n} \norm{\nabla f(x_k)}^2 + \frac{L_1 \mu^2}{2}\, .
\end{align*}
\end{corollary}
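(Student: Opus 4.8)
The plan is to specialize Lemma~\ref{lemma:onestepabs} by choosing the free point $z$ so that the linear term becomes $-\norm{\nabla f(x_k)}^2$ and the quadratic term is controlled by the constraint on $h_k$. Concretely, I would set $z = x_k - \nabla f(x_k)$, so that $z - x_k = -\nabla f(x_k)$. Then $\lin{\nabla f(x_k), z - x_k} = -\norm{\nabla f(x_k)}^2$ and $\norm{z - x_k}^2 = \norm{\nabla f(x_k)}^2$, and the bound from Lemma~\ref{lemma:onestepabs} with free parameter $h = h_k$ reads
\begin{align*}
\expect \left[ f(x_{k+1}) \mid x_k \right] \leq f(x_k) - \frac{h_k}{n} \norm{\nabla f(x_k)}^2 + \frac{L_1 h_k^2}{2n} \norm{\nabla f(x_k)}^2 + \frac{L_1 \mu^2}{2} \, .
\end{align*}

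The next step is to combine the two middle terms: the coefficient of $\norm{\nabla f(x_k)}^2$ is $-\frac{h_k}{n} + \frac{L_1 h_k^2}{2n} = -\frac{h_k}{n}\left(1 - \frac{L_1 h_k}{2}\right)$. Using the hypothesis $h_k \leq \frac{1}{L_1}$, we get $\frac{L_1 h_k}{2} \leq \frac12$, hence $1 - \frac{L_1 h_k}{2} \geq \frac12$, so the coefficient is at most $-\frac{h_k}{2n}$. Substituting this estimate yields exactly the claimed inequality
\begin{align*}
\expect \left[ f(x_{k+1}) \mid x_k \right] \leq f(x_k) - \frac{h_k}{2n} \norm{\nabla f(x_k)}^2 + \frac{L_1 \mu^2}{2} \, ,
\end{align*}
which completes the argument.

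Honestly, there is no real obstacle here: the corollary is a one-line consequence of Lemma~\ref{lemma:onestepabs} once the substitution $z = x_k - \nabla f(x_k)$ is made and the elementary bound $1 - L_1 h_k/2 \geq 1/2$ is invoked. The only point requiring the slightest care is making sure the sign works out — that $z-x_k$ equals $-\nabla f(x_k)$ rather than $+\nabla f(x_k)$ — and that $h_k$ is indeed positive as required by the lemma, which is given in the hypothesis. The term $\frac{L_1\mu^2}{2}$ simply passes through unchanged.
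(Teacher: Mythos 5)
Your proof is correct and follows exactly the paper's own argument: substitute $z = x_k - \nabla f(x_k)$ into Lemma~\ref{lemma:onestepabs}, combine the linear and quadratic terms into the factor $\frac{h_k}{n}\left(1-\frac{L_1 h_k}{2}\right)$, and bound it below by $\frac{h_k}{2n}$ using $h_k \leq \frac{1}{L_1}$. Nothing to add.
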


{\em Proof}.
Lemma~\ref{lemma:onestepabs} with $z= x_k - \nabla f(x_k)$ yields
\begin{align*}
\expect \left[ f(x_{k+1}) \mid x_k \right] %
& \leq f(x_k) - \frac{h_k}{n} \lin{\nabla f(x_k),\nabla f(x_k)}  + \frac{L_1 h_k^2}{2n} \norm{ \nabla f(x_k)}^2 + \frac{L_1 \mu^2}{2} \, .
\end{align*}
We conclude
\begin{align*}
\expect \left[ f(x_{k+1})\mid x_k \right] %
\leq  f(x_k)  -  \underbrace{ \frac{h_k}{n} \left(1- \frac{L_1 h_k}{2} \right)}_{\geq \frac{h_k}{2 n}} \norm{\nabla f(x_k)}^2 + \frac{L_1 \mu^2}{2} \, . \qquad \endproof
\end{align*}

\subsection{Towards the General Convex Case}
For this case, we apply Lemma~\ref{lemma:onestepabs} with $z=x^*$.
\begin{corollary}
\label{cor:onestep_opt}
Let $f \in C^1_{L_1}$ and let $x_k \in \reals^n$ be the current iterate and $x_{k+1} \in \reals^n$ the next iterate generated by algorithm $\mathcal{RP}_\mu$ with absolute line search accuracy $\mu$. Let $x^* \in \reals^n$ be one of the minimizers of the function $f$. For any positive $h_k \geq 0$ it holds that
\begin{align*}
\expect \left[ f(x_{k+1})-f(x^*) \mid x_k \right] %
& \leq (1-\frac{h_k}{n})  \left(f(x_k)-f(x^*) \right)  + \frac{L_1 h_k^2}{2n} \norm{x^*-x_k}^2 + \frac{L_1 \mu^2}{2}\, . 
\end{align*}
\end{corollary}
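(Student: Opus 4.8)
The plan is to apply Lemma~\ref{lemma:onestepabs} directly with the choice $z = x^*$, and then convert the resulting inner-product term into a function-value gap using convexity. This mirrors the structure of the proof of Corollary~\ref{cor:onestep_grad}, except that there the choice $z = x_k - \nabla f(x_k)$ led to a complete-the-square step, whereas here the choice $z = x^*$ leads directly to the desired contraction factor.

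First I would instantiate Lemma~\ref{lemma:onestepabs} with $z = x^*$ and $h = h_k$, which gives
\begin{align*}
\expect \left[ f(x_{k+1}) \mid x_k \right] \leq f(x_k) + \frac{h_k}{n}\lin{\nabla f(x_k), x^* - x_k} + \frac{L_1 h_k^2}{2n}\norm{x^* - x_k}^2 + \frac{L_1 \mu^2}{2} \, .
\end{align*}
Next I would invoke the convexity inequality~\eqref{def:convex1} with $x = x_k$ and $y = x^*$, i.e.\ $f(x^*) \geq f(x_k) + \lin{\nabla f(x_k), x^* - x_k}$, which rearranges to $\lin{\nabla f(x_k), x^* - x_k} \leq f(x^*) - f(x_k)$. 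Since $h_k \geq 0$, multiplying this by $h_k/n$ and substituting into the bound above yields
\begin{align*}
\expect \left[ f(x_{k+1}) \mid x_k \right] \leq f(x_k) + \frac{h_k}{n}\bigl(f(x^*) - f(x_k)\bigr) + \frac{L_1 h_k^2}{2n}\norm{x^* - x_k}^2 + \frac{L_1 \mu^2}{2} \, .
\end{align*}
Finally I would subtract $f(x^*)$ from both sides and regroup, using $f(x_k) - f(x^*) - \frac{h_k}{n}\bigl(f(x_k) - f(x^*)\bigr) = \bigl(1 - \frac{h_k}{n}\bigr)\bigl(f(x_k) - f(x^*)\bigr)$, to arrive at the claimed inequality.

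I expect no real obstacle here. The only point requiring care is the sign bookkeeping: the hypothesis $h_k \geq 0$ is exactly what preserves the direction of the convexity inequality when multiplying through by $h_k/n$. It is also worth noting that, in contrast with Corollary~\ref{cor:onestep_grad}, no upper bound of the form $h_k \leq 1/L_1$ is needed, because the quadratic-in-$h_k$ term here is weighted by the fixed quantity $\norm{x^* - x_k}^2$ rather than by $\norm{\nabla f(x_k)}^2$, so there is nothing to absorb it into.
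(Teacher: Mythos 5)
Your argument is correct and is exactly the paper's proof: apply Lemma~\ref{lemma:onestepabs} with $z=x^*$, bound $\lin{\nabla f(x_k),x^*-x_k}$ by $f(x^*)-f(x_k)$ via convexity~\eqref{def:convex1}, and subtract $f(x^*)$ from both sides. The added remarks on the role of $h_k\geq 0$ and on why no upper bound on $h_k$ is needed are accurate.
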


\begin{proof}
We use Lemma~\ref{lemma:onestepabs} with $z=x^*$ and apply convexity
(\ref{def:convex1}) to bound the term $\lin{\nabla f(x_k),x^*-x_k}$
from above by $f(x^*)-f(x_k)$. Subtracting $f(x^*)$ from both sides
yields the inequality of the corollary. 
\end{proof}

\section{Convergence Results}
\label{sec:convergence}
Here we use the previously derived bounds on the expected single step
progress (Corollaries~\ref{cor:onestep_grad}
and~\ref{cor:onestep_opt}) to show convergence of the algorithm.

\subsection{Convergence Analysis for Strongly Convex Functions}
We first prove that algorithm $\mathcal{RP}_{\mu}$ converges linearly
in expectation on strongly convex functions.  Despite strong convexity
being a global property, it is sufficient if the function is strongly
convex in the neighborhood of its minimizer (see
Theorem~\ref{thm:localstrong}).
\begin{theorem}
\label{thm:stongconvex}
Let $f \in C^1_{L_1}$ and let $f$ be strongly convex with parameter $m$, and consider the sequence $\{x_k\}_{k\geq0}$ generated by $\mathcal{RP}_{\mu}$ with absolute line search accuracy $\mu$. Then for any $N \geq 0$, we have
\begin{align*}
\expect \left[f(x_N) - f(x^*) \right] \leq \left(1- \frac{m}{L_1 n} \right)^N \left(f(x_0) - f(x^*)\right) + \frac{L_1^2 n \mu^2}{2m} \, .
\end{align*}
\end{theorem}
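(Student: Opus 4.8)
The plan is to combine the single-step bound from Corollary~\ref{cor:onestep_grad} with the strong-convexity relation~\eqref{eq:quadraticlower}, and then unroll the resulting one-step recursion. First I would invoke Corollary~\ref{cor:onestep_grad} with the specific step-size choice $h_k = 1/L_1$ (the largest admissible value, which maximizes the guaranteed decrease); this gives
\begin{align*}
\expect\left[f(x_{k+1})\mid x_k\right] \leq f(x_k) - \frac{1}{2L_1 n}\norm{\nabla f(x_k)}^2 + \frac{L_1\mu^2}{2}.
\end{align*}
Next I would use the second inequality in~\eqref{eq:quadraticlower}, namely $\norm{\nabla f(x_k)}^2 \geq 2m\left(f(x_k)-f(x^*)\right)$, to replace the gradient-norm term by a multiple of the optimality gap. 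Subtracting $f(x^*)$ from both sides and writing $\delta_k := \expect\left[f(x_k)-f(x^*)\right]$ (taking a further expectation over $x_k$ via the tower property), this yields the scalar recursion
\begin{align*}
\delta_{k+1} \leq \left(1 - \frac{m}{L_1 n}\right)\delta_k + \frac{L_1\mu^2}{2}.
\end{align*}

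The final step is to solve this affine recursion. Setting $q := 1 - \frac{m}{L_1 n}$ and $c := \frac{L_1\mu^2}{2}$, induction gives $\delta_N \leq q^N \delta_0 + c\sum_{j=0}^{N-1} q^j \leq q^N\delta_0 + \frac{c}{1-q}$, and since $1-q = \frac{m}{L_1 n}$ we get $\frac{c}{1-q} = \frac{L_1\mu^2/2}{m/(L_1 n)} = \frac{L_1^2 n\mu^2}{2m}$, which is exactly the additive term in the statement. This requires $0 \le q < 1$, which holds because $m \le L_1$ (a standard consequence of comparing the quadratic lower and upper bounds) so that $\frac{m}{L_1 n} \le \frac1n \le 1$; strictly, $q<1$ needs $m>0$, which is part of strong convexity.

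I do not anticipate a serious obstacle here: the argument is a textbook-style contraction estimate and every ingredient is already in place. The one point that deserves care is the handling of randomness — Corollary~\ref{cor:onestep_grad} is a statement conditional on $x_k$, so to chain it over $N$ iterations I must take iterated expectations, using that $\expect\left[\expect[\,\cdot\mid x_k]\right]=\expect[\,\cdot\,]$, and note that the step-size choice $h_k=1/L_1$ is deterministic (depends only on the known constant $L_1$, not on the random iterate), so the bound remains valid in expectation. A secondary subtlety worth a remark is that the telescoped error term is bounded by the \emph{infinite} geometric series $\sum_{j\ge 0} q^j$ rather than the finite one, which slightly overestimates but gives the clean, $N$-independent constant stated.
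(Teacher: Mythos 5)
Your proposal is correct and follows essentially the same route as the paper's own proof: Corollary~\ref{cor:onestep_grad} with $h_k=1/L_1$, the bound $\norm{\nabla f(x_k)}^2\geq 2m\left(f(x_k)-f(x^*)\right)$ from~\eqref{eq:quadraticlower}, the tower property to obtain the affine recursion, and the geometric series bounded by $\sum_{i\geq 0}q^i = L_1 n/m$. The remarks on the deterministic step size and on $0\le q<1$ are sound but not needed beyond what the paper already does implicitly.
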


\begin{proof}
We use Corollary~\ref{cor:onestep_grad} with $h=\frac{1}{L_1}$ and the quadratic lower bound 
to estimate the progress in one step as
\begin{align*}
\expect \left[ f(x_{k+1}) - f(x^*) \mid x_k \right] %
&\leq   f(x_k) - f(x^*) -  \frac{1}{2 n L_1} \norm{\nabla f(x_k)}^2 + \frac{L_1 \mu^2}{2} \\
&\stackrel{\eqref{eq:quadraticlower}}{\leq} \left(1- \frac{m}{n L_1} \right)  \left(f(x_k) - f(x^*)\right) + \frac{L_1 \mu^2}{2} \, .
\end{align*}
After taking expectations (over $x_k$), the tower property of conditional
expectations yields the recurrence
\[
\expect \left[ f(x_{k+1}) - f(x^*)\right] \leq \left(1- \frac{m}{n L_1} \right)\expect \left[ f(x_{k}) - f(x^*)\right]+ \frac{L_1 \mu^2}{2} \, .
\]
This implies
\[
\expect \left[ f(x_{N}) - f(x^*)\right] \leq \left(1-\frac{m}{n L_1}\right)^N 
\left(f(x_{0}) - f(x^*)\right)+ \omega(N) \frac{L_1 \mu^2}{2} \, ,
\]
with
\begin{align*}
\omega(N) :=  \sum_{i=0}^{N-1} (1-\frac{m}{L_1 n})^i \leq \frac{L_1 n}{m} \, .
\end{align*} 
The bound of the theorem follows. 
\end{proof}

We remark that by strong convexity the error $\norm{x_N-x^*}$ can also
be bounded using the results of this theorem. Thus, the algorithm does not only converge in terms of function value, but also in terms of the solution itself.

Each strongly convex function has a unique minimizer $x^*$. Using the quadratic lower bound~\eqref{eq:quadraticlower} we recall that:
\begin{align}
\label{eq:localstrong}
f(x)- f(x^*) \geq \frac{m}{2} \norm{x-x^*}^2, \quad \forall x \in \reals^n.
\end{align}
It turns out that instead of strong convexity~\eqref{def:stronconvex} the weaker condition~\eqref{eq:localstrong} is sufficient to have linear convergence. 

\begin{theorem}
\label{thm:localstrong}
Let $f \in C^1_{L_1}$ and suppose $f$ has a unique minimizer $x^*$ satisfying~\eqref{eq:localstrong} with parameter $m$. Consider the sequence $\{x_k\}_{k\geq0}$ generated by $\mathcal{RP}_{\mu}$ with absolute line search accuracy $\mu$. Then for any $N \geq 0$, we have
\begin{align*}
\expect \left[f(x_N) - f(x^*) \right] \leq \left(1- \frac{m}{4 L_1 n} \right)^N \left(f(x_0) - f(x^*)\right) + \frac{2 L_1^2 n \mu^2}{m} \, .
\end{align*}
\end{theorem}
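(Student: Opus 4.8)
The plan is to follow the same skeleton as the proof of Theorem~\ref{thm:stongconvex} — derive a one-step contraction on the gap $f(x_k)-f(x^*)$ with an additive $O(\mu^2)$ error, then unroll — but with one essential change. Here we only have the weak quadratic lower bound~\eqref{eq:localstrong}, not full strong convexity~\eqref{def:stronconvex}, so we cannot access the gradient lower bound in~\eqref{eq:quadraticlower} and hence cannot start from Corollary~\ref{cor:onestep_grad}. Instead I would start from the ``general convex'' single-step estimate, Corollary~\ref{cor:onestep_opt} with $z=x^*$, which for every positive $h_k$ gives
\[
\expect\left[f(x_{k+1})-f(x^*)\mid x_k\right]\leq\Bigl(1-\tfrac{h_k}{n}\Bigr)\bigl(f(x_k)-f(x^*)\bigr)+\tfrac{L_1h_k^2}{2n}\norm{x^*-x_k}^2+\tfrac{L_1\mu^2}{2}\,.
\]

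The second key step is to invoke the hypothesis~\eqref{eq:localstrong} in the form $\norm{x^*-x_k}^2\leq\tfrac{2}{m}\bigl(f(x_k)-f(x^*)\bigr)$ to eliminate the distance term, turning the right-hand side into a genuine contraction on the gap with factor $1-\tfrac{h_k}{n}+\tfrac{L_1h_k^2}{mn}$ plus the additive error $\tfrac{L_1\mu^2}{2}$. Then I would choose $h_k$ to minimize that factor; elementary calculus gives $h_k=\tfrac{m}{2L_1}$, for which the factor equals $1-\tfrac{m}{4L_1n}$. Note that, unlike in Theorem~\ref{thm:stongconvex} where the choice $h=1/L_1$ was dictated by the constraint $h\leq 1/L_1$ in Corollary~\ref{cor:onestep_grad}, here Corollary~\ref{cor:onestep_opt} imposes no upper bound on $h_k$, so we are free to take the unconstrained optimum.

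The remainder is routine bookkeeping: take expectations over $x_k$, use the tower property to get the recurrence $\expect[f(x_{k+1})-f(x^*)]\leq(1-\tfrac{m}{4L_1n})\expect[f(x_k)-f(x^*)]+\tfrac{L_1\mu^2}{2}$, unroll it over $N$ steps, and bound the geometric sum $\sum_{i=0}^{N-1}(1-\tfrac{m}{4L_1n})^i$ by $\tfrac{4L_1n}{m}$ (in the relevant regime $m\leq 4L_1n$, exactly as $\omega(N)$ was bounded in Theorem~\ref{thm:stongconvex}). Multiplying this bound by $\tfrac{L_1\mu^2}{2}$ produces the additive term $\tfrac{2L_1^2n\mu^2}{m}$, matching the statement. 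I do not expect any real obstacle; the only points requiring care are the optimization over $h_k$ and tracking the constants, and the outcome is simply that the weaker hypothesis costs a factor $4$ in both the convergence rate and the noise floor relative to Theorem~\ref{thm:stongconvex}.
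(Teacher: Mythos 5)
Your proposal is correct and follows exactly the paper's own argument: apply Corollary~\ref{cor:onestep_opt} with $z=x^*$, use~\eqref{eq:localstrong} to absorb the $\norm{x_k-x^*}^2$ term into the gap, set $h_k=\frac{m}{2L_1}$ to obtain the contraction factor $1-\frac{m}{4L_1n}$, and then unroll as in Theorem~\ref{thm:stongconvex}. All constants check out, so there is nothing to add.
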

\begin{proof}
To see this we use Corollary~\ref{cor:onestep_opt} with property~\eqref{eq:localstrong} to get
\begin{align*}
\expect \left[ f(x_{k+1})-f(x^*) \mid x_k \right] %
& \leq \left(1-\frac{h_k}{n}\right)  \left(f(x_k)-f(x^*) \right)  + \frac{L_1 h_k^2}{2n} \norm{x_k-x^*}^2 + \frac{L_1 \mu^2}{2} \\
& \leq \left(1-\frac{h_k}{n} + \frac{L_1 h_k^2}{mn}  \right)  \left(f(x_k)-f(x^*) \right) + \frac{L_1 \mu^2}{2} \, .
\end{align*}
Setting $h_k$ to $\frac{m}{2L_1}$, the term in the left bracket becomes 
$(1- \frac{m}{4 L_1 n})$. Now the proof continues as the proof of
Theorem~\ref{thm:stongconvex}.
\end{proof}

\subsection{Convergence Analysis for Convex Functions}
We now prove that algorithm $\mathcal{RP}_{\mu}$ converges in expectation on smooth (not necessarily strongly) convex functions. The rate is, however, not linear anymore.

\begin{theorem}
\label{thm:convex_bounded}
Let $f \in C^1_{L_1}$ and let $x^*$ a minimizer of $f$, and let the sequence $\{x_k\}_{k\geq0}$ be generated by $\mathcal{RP}_{\mu}$ with absolute line search accuracy $\mu$. 
Assume there exists $R$, s.t. $\norm{y-x^*} < R$ for all $y$ with $f(y) \leq f(x_0)$. 
Then for any $N \geq 0$, we have
\begin{align*}
\expect \left[ f(x_N)- f(x^*) \right] \leq \frac{Q}{N+1} + \frac{N L_1 \mu^2}{2} \, ,
\end{align*}
where
\begin{align*}
Q=\max\left\{2 n L_1 R^2, f(x_0)-f(x^*)\right\} \, .
\end{align*}
\end{theorem}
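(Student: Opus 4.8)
The plan is to combine the single-step bound of Corollary~\ref{cor:onestep_opt}, instantiated at $z=x^*$, with a carefully tuned step-size schedule, and then to run an induction on $N$. Write $\Delta_0 := f(x_0)-f(x^*)$ and $\delta_k := \expect[f(x_k)-f(x^*)]$. First I would take expectations in Corollary~\ref{cor:onestep_opt} and use the tower property to obtain a deterministic recurrence for $\delta_k$; to control the term $\|x_k-x^*\|^2$ there, I would argue that the iterates stay (essentially) in the sublevel set $\{y:f(y)\le f(x_0)\}$ — for an exact line search $f(x_{k+1}')\le f(x_k)$ gives this outright, and with absolute error one loses only an extra $\frac{L_1\mu^2}{2}$ per step — so that $\|x_k-x^*\|< R$, hence $\|x_k-x^*\|^2\le R^2$. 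This yields
\[
\delta_{k+1}\le\Bigl(1-\frac{h_k}{n}\Bigr)\delta_k+\frac{L_1h_k^2}{2n}R^2+\frac{L_1\mu^2}{2}.
\]

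The key step is the choice $h_k=\frac{2n}{k+2}$, so that $h_k/n=\frac{2}{k+2}\in(0,1]$ and the recurrence becomes $\delta_{k+1}\le\frac{k}{k+2}\delta_k+\frac{2nL_1R^2}{(k+2)^2}+\frac{L_1\mu^2}{2}$. I would then prove by induction on $N$ that $\delta_N\le\frac{Q}{N+1}+\frac{NL_1\mu^2}{2}$ with $Q=\max\{2nL_1R^2,\Delta_0\}$. The base case $N=0$ holds because $\Delta_0\le Q$. In the inductive step, substituting the hypothesis and using $2nL_1R^2\le Q$ reduces the claim to two elementary inequalities valid for all $k\ge0$: $\frac{k}{(k+1)(k+2)}+\frac{1}{(k+2)^2}\le\frac{1}{k+2}$ (equivalently $\frac{1}{k+2}\le\frac{1}{k+1}$) for the $Q$-part, and $\frac{k^2}{k+2}+1\le k+1$ (equivalently $0\le 2k$) for the $\mu$-part; this closes the induction and gives exactly the asserted bound.

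The thing to get right is the step-size schedule: a naive choice such as $h_k=\frac{n}{k+1}$ does \emph{not} produce the $1/(N+1)$ rate, and optimizing the constant in $h_k=\frac{cn}{k+2}$ shows $c=2$ is the right value — it is precisely what makes the overhead in the induction affordable once $Q\ge 2nL_1R^2$, which is where the first term of the maximum defining $Q$ comes from, while the second term $\Delta_0$ is forced by the base case. The one slightly delicate point is the level-set argument under line-search error, since for $\mu>0$ the inequality $f(x_{k+1})\le f(x_k)+\frac{L_1\mu^2}{2}$ does not literally confine the iterates to $\{f\le f(x_0)\}$; this is harmless in the intended regime (small $\mu$, finite $N$) but should be acknowledged, or handled by taking $R$ with a small margin. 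Everything else — passing to expectations, the tower property, and verifying the two numerical estimates — is routine.
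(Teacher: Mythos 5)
Your proof is correct and follows essentially the same route as the paper's: the recurrence from Corollary~\ref{cor:onestep_opt} with $\norm{x_k-x^*}\le R$, a step size of order $2n/k$, and an induction that is precisely the content of the paper's Lemma~\ref{lemma:ind} (stated and proved separately in the appendix, with $\theta=2$, $C=nL_1R^2/2$, $D=L_1\mu^2/2$). Two of your choices are in fact mild improvements: taking $h_k=2n/(k+2)$ rather than the paper's $h_k=2n/(k+1)$ keeps the contraction factor $1-h_k/n=k/(k+2)$ nonnegative for every $k$ (the paper's choice gives $1-h_0/n=-1$ at $k=0$, which makes the first step of the induction in Lemma~\ref{lemma:ind} delicate, since multiplying the inductive bound by a negative coefficient reverses the inequality), and you correctly flag that under absolute line-search error the iterates need not remain exactly in the sublevel set $\{y: f(y)\le f(x_0)\}$, a point the paper's proof passes over when it asserts $\norm{x_k-x^*}\le R$ for all $k$.
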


\begin{proof}
By assumption, there exists an $R \in \reals$, s.t. $\norm{x_k - x^*} \leq R$, for all $k= 0, 1, \dots, N$. With Corollary~\ref{cor:onestep_opt} it follows for any step size $h_k \geq 0$:
\begin{align}
\label{eq:unbounded}
\expect \left[ f(x_{k+1})-f(x^*) \mid x_k \right] %
&\leq \left(1-\frac{h_k}{n}\right)  \left(f(x_k)-f(x^*) \right)  + \frac{L_1 h_k^2}{2n} R^2 + \frac{L_1 \mu^2}{2} \, .
\end{align}
Taking expectation we obtain
\begin{align*}
\expect\left[f(x_{k+1})-f(x^*) \right] & \leq \left(1- \frac{h_k}{n}\right) \expect\left[f(x_k)-f(x^*) \right] +  \left(\frac{h_k}{n}\right)^2 \frac{nL_1 R^2}{2} + \frac{L_1 \mu^2}{2} \, . 
\end{align*}
By setting $h_k := \frac{2n}{k+1} \text{ for } k=0, \dots, (N-1)$ 
we obtain a recurrence that is exactly of the form as treated in Lemma~\ref{lemma:ind} and the result follows.
\end{proof}

We note that for $\epsilon > 0$ the exact algorithm $\mathcal{RP}_{0}$
needs $O\left(\frac{n}{\epsilon}\right)$ steps to guarantee an
approximation error of $\epsilon$. According to the discussion
preceding Lemma~\ref{lemma:onesteprel}, this still holds under an approximate
line search with fixed relative error. 

In the absolute error model, however, the error bound of
Theorem~\ref{thm:convex_bounded} becomes meaningless as
$N\rightarrow\infty$. Nevertheless, for $N_\text{opt} = \sqrt{2 Q/(L_1
  \mu^2)}$ the bound yields
\begin{align*}
\expect\left[f(x_{N_\text{opt}})-f(x^*) \right] \leq \frac{Q}{N_\text{opt}}  + \frac{N_\text{opt} L_1 \mu^2}{2} \leq \mu \sqrt{2 Q L_1}.
\end{align*}
%

\subsection{Remarks}
We emphasize that the constant $L_1$ and the strong-convexity parameter $m$ which describe the behavior of the function are only needed for the theoretical analysis of $\mathcal{RP}_{\mu}$. These parameters are \emph{not} input parameters to the algorithm. No pre-calculation or estimation of these parameters is thus needed in order to use the algorithm on convex functions. Moreover, the presented analysis does not need parameters that describe the properties of the function on the whole domain. It is sufficient to restrict our view on the sub-level set determined by the initial iterate. Consequently, if the function parameters get better in a neighborhood of the optimum, the performance of the algorithm may be better than theoretically predicted by the worst-case analysis.



\section{Computational Experiments}
\label{sec:experiments}
We complement the presented theoretical analysis with extensive numerical optimization experiments on selected benchmark functions. We compare the performance of the $\mathcal{RP}_{\mu}$ algorithm with a number of gradient-free algorithms that share the simplicity of Random Pursuit in terms of the computational search primitives used. We also introduce a heuristic acceleration scheme for Random Pursuit, the accelerated $\mathcal{RP}_{\mu}$ method ($\mathcal{ARP}_{\mu}$). As a generic reference we also consider two steepest descent schemes that use analytic gradient information. The test function set comprises two quadratic functions with different condition numbers, two variants of Nesterov's smooth function \cite{nesterov:book}, and a non-convex funnel-shaped function. We first detail the algorithms, their input requirements, and necessary parameter choices. We then present the definition of the test functions, describe the experimental performance evaluation protocol, and present the numerical results. Further experimental data are available in the supporting online material~\cite{Stich:2011s} at \texttt{http://arxiv.org/abs/1111.0194}.

\subsection{Algorithms}
We now introduce the set of tested algorithms. All methods have been implemented in MATLAB. The source code is also publicly available in the supporting online material~\cite{Stich:2011s}.

\subsubsection{Random Gradient Methods}
We consider two randomized methods that are discussed in detail in \cite{nesterov:randomgradientfree}. The first algorithm, the Random Gradient Method ($\mathcal{RG}$), implements the iterative scheme described in (\ref{eq:random2}). A necessary ingredient for the algorithm is an oracle that provides directional derivatives. The accuracy of the directional derivatives is controlled by the finite difference step size $\mu$. A pseudo-code representation of the approximate Random Gradient method ($\mathcal{RG}_\mu$) along with a convergence proof is described in~\cite[Section 5]{nesterov:randomgradientfree}. We implemented $\mathcal{RG}_\mu$ and used the parameter setting $\mu=1\textsc{e}-5$. A necessary input to the  $\mathcal{RG}_\mu$ algorithm is the function-dependent Lipschitz constant $L_1$ that is used to determine the step size $\lambda_k=1/(4 (n+4) L_1)$. We also consider Nesterov's fast Random Gradient Method ($\mathcal{FG}$) ~\cite{nesterov:randomgradientfree}. This algorithm simultaneously evolves two iterates in the search space where, in each iteration, a directional derivative is approximately computed at specific linear combinations of these points. In~\cite[Section 6]{nesterov:randomgradientfree} Nesterov provides a pseudo-code for the approximate scheme $\mathcal{FG}_\mu$ and proves convergence on strongly convex functions. We implemented the $\mathcal{FG}_\mu$ scheme and used the parameter setting $\mu=1\textsc{e}$-5. Further necessary input parameters are both the $L_1$ constant and the strong convexity parameter $m$ of the respective test function.

\subsubsection{Random Pursuit Methods}
In the implementation of the $\mathcal{RP}_{\mu}$ algorithm we choose the sampling directions uniformly at random from the hypersphere. We use the built-in MATLAB routine \texttt{fminunc.m} from the optimization toolbox \cite{MATLAB:2012} with \texttt{optimset('TolX'=$\mu$)} as approximate line search oracle $\lsapprox_{\mu}$ with $\mu =1\textsc{e}$-5. In the present gradient-free setting \texttt{fminunc.m} uses a mixed cubic/quadratic polynomial line search where the first three points bracketing the minimum are found by bisection \cite{MATLAB:2012}. 

Inspired by the $\mathcal{FG}$ scheme we also designed an accelerated Random Pursuit algorithm ($\mathcal{ARP}_{\mu}$) which is summarized in Algorithm~\ref{alg:arp}. 
\begin{algorithm}[h]
	\caption{Accelerated Random Pursuit ($\mathcal{ARP}_{\mu}$)}
	\label{alg:arp}
	  \begin{algorithmic}[1]
	  \REQUIRE $\begin{array}{l} %
                 N, x_0, \mu, m, L_1
          \end{array}$
\medskip
	    \STATE $\theta = \frac{1}{L_1 n^2}$, $\gamma_0 \geq m$, $\nu_0 = x_0$
			\FOR {$k = 0$ to $N$}
			\STATE Compute $\beta_k > 0$ satisfying $\theta^{-1} \beta_k^2 = (1-\beta_k) \gamma_k + \beta_k m =: \gamma_{k+1}$.
			\STATE Set $\lambda_k=\frac{\beta_k}{\gamma_{k+1}} m$, $\delta_k = \frac{\beta_k \gamma_k}{\gamma_k + \beta_k m}$, and $y_k=(1-\delta_k) x_k + \delta_k v_k$.
			\STATE $u_k \sim S^{n-1}$
			\STATE Set $x_{k+1} = y_k + \lsapprox_\mu(y_k,u_k) \cdot u_k$.
			\STATE Set $v_{k+1}=(1-\lambda_k)v_k + \lambda_k y_k + \frac{\lsapprox_\mu(y_k,u_k)}{\beta_k n} u_k$.
			\ENDFOR
		\end{algorithmic}
\end{algorithm}
The structure of this algorithm is similar to Nesterov's $\mathcal{FG}_\mu$ scheme. In $\mathcal{ARP}_{\mu}$ the step size calculation is, however, provided by the line search oracle. Although we currently lack theoretical guarantees for this scheme we here report the experimental performance results. Analogously to the $\mathcal{FG}_\mu$ algorithm, the accelerated $\mathcal{RP}_{\mu}$ algorithm needs the function-dependent parameters $L_1$ and $m$ as necessary input. The line search oracle is identical to the one in standard Random Pursuit. 

\subsubsection{Adaptive Step Size Random Search Methods} The previous randomized schemes proceed along random directions either by using pre-calculated step sizes or by using line search oracles. In adaptive step size random search methods the step size is dynamically controlled such as to approximately guarantee a certain probability $p$ of finding an improving iterate. Schumer and Steiglitz~\cite{schumer:68} were among the first to propose such a scheme. In the bio-inspired optimization literature, the method is known as the (1+1)-Evolution Strategy ($\mathcal{ES}$) \cite{Rechenberg:1973}. J\"agersk\"upper \cite{jaeg:05} provides a convergence proof of $\mathcal{ES}$ on convex quadratic functions. We here consider the following generic $\mathcal{ES}$ algorithm summarized in Algorithm~\ref{alg:es}.
\begin{algorithm}[htb]
	\caption{(1+1)-Evolution Strategy ($\mathcal{ES}$) with adaptive step size control}
	\label{alg:es}
	  \begin{algorithmic}[1]
	  	 \REQUIRE $\begin{array}{l} %
                 N, x_0, \sigma_0, \text{probability of improvement $p=0.27$}
          \end{array}$
\medskip
      \STATE Set $c_s = e^{\frac{1}{3}}, c_f= c_s \cdot e^{\frac{-p}{1-p}}$.
			\FOR {$k = 0$ to $N$}
			  \STATE $u_k \sim \normal(0, I_n)$
			  \IF {$f(x_k + \sigma_k u_k) \leq f(x_k)$} 
			  \STATE Set $x_{k+1}=x_k + \sigma_k u_k$ and $\sigma_{k+1}=c_s\cdot \sigma_k$. 
			  \ELSE  
			  \STATE Set $x_{k+1}=x_k$ and $\sigma_{k+1}=c_f \cdot \sigma_k$.
			  \ENDIF
			\ENDFOR
		\end{algorithmic}
\end{algorithm}\\
Depending on the specific random direction generator and the underlying test function different optimality conditions can be formulated for the probability $p$. Schumer and Steiglitz \cite{schumer:68} suggest the setting $p=0.27$ which is also considered in this work. For all of the considered test functions the initial step size $\sigma_0$ has been determined experimentally in order to guarantee the targeted $p$ at the start (see Table~\ref{tab:sigma} for the respective values). The $\mathcal{ES}$ algorithm shares $\mathcal{RP}_{\mu}$'s invariance under strictly monotone transformations of the objective function.

\subsubsection{First-order Gradient Methods}
In order to illustrate the numerical efficiency of the randomized zeroth-order schemes relative to that of first-order methods, we also consider two Gradient Methods as outlined in (\ref{eq:GM}). The first method ($\mathcal{GM}$) uses a fixed step size $\lambda_k=\frac{1}{L_1}$~\cite{nesterov:book}. The function-dependent constant $L_1$ is, thus, part of the input to the $\mathcal{GM}$ algorithm. The second method ($\mathcal{GM}_{LS}$) determines the step size $\lambda_k$ in each iteration using $\mathcal{RP}_{\mu}$ line search oracle $\lsapprox_{\mu}$ with $\mu=1\textsc{e}$-5 as input parameter. 

\subsection{Benchmark Functions}
We now present the set of test functions used for the numerical performance evaluation of the different optimization schemes. We present the three function classes and detail the specific function instances and their properties. 

\subsubsection{Quadratic Functions}
We consider quadratic test functions of the form:
\begin{align}
\label{fn:quadratic}
f(x) &= \frac{1}{2} (x-1)^T Q (x-1) \, ,
\end{align}
where $x \in \reals^n$ and $Q \in \reals^{n \times n}$ is a diagonal matrix. For given $L_1$ the diagonal entries are chosen in the interval $[1,L_1]$. The minimizer of this function class is $x^* = 1$ and $f(x^*) = 0$. The derivative is $\nabla f(x) = Q(x-1)$.
We consider two different matrix instances. Setting $Q=I_n$ the $n$-dimensional identity matrix the function reduces to the shifted sphere function denoted here by $f_1$. In order to get a quadratic function with anisotropic axis lengths we use a matrix $Q$ whose first $n/2$ diagonal entries are equal to $L_1$ and the remaining entries are set to $1$. This ellipsoidal function is denoted by $f_2$.

\subsubsection{Nesterov's Smooth Functions}
We consider Nesterov's smooth function as introduced in Nesterov's text book~\cite{nesterov:book}. The generic version of this function reads:
\begin{align}
\label{fn:nesterovsmooth}
f_3(x) = \frac{L_1}{4} \left( \frac{1}{2} \left[ x_1^2 + \sum_{i=1}^{n-1} \left(x_{i+1}-x_i \right)^2 + x_n^2 \right] - x_1 \right) \, .
\end{align}
This function has derivative $\nabla f_3(x) =\frac{L_1}{4} \left(Ax - e_1\right)$, where
\begin{align*}
A &= \begin{bmatrix} 
      2 & -1 & 0 &0   &  &  &  & \\
     -1 & 2  & -1 &0 & & & 0 & \\
     0 & -1 & 2 &1   &  & &  & \\ 
      & & \ddots & \ddots &  \ddots  & &  &  \\
      &  & 0 &   &  &  -1 &  2 & -1\\
      &  &  &   &  & 0 &  -1 & 2\\
     \end{bmatrix} %
& &\text{and} & e_1 &= (1, 0, \dots, 0)^T.
\end{align*}
The optimal solution is located at:
\begin{align*}
x^*_i = 1- \frac{i}{n+1} \, , \quad \text{for } i=1, \dots, n.
\end{align*}
For fixed dimension $n$, this function is strongly convex with parameter $m \approx \frac{L_1}{4(n+1)^2}$. Thus, the condition $L_1/m$ grows quadratically with the dimension.
Adding a regularization term leads, however, to a strongly convex function with parameter $m$ independent of the dimension. Given $L_1 \geq m > 0$, the regularized function reads:
\begin{align}
\label{fn:nesterovstrong}
f_4(x) = \frac{L_1-m}{4} \left( \frac{1}{2} \left[ x_1^2 + \sum_{i=1}^{n-1} \left(x_{i+1}-x_i \right)^2 + x_n^2 \right] - x_1 \right) + \frac{m}{2} \norm{x}^2 \, .
\end{align}
This function is strongly convex with parameter $m$.\\
Its derivative $\nabla f_4(x) =\left( \frac{L_1-m}{4} A + m I \right) x - \frac{L_1-m}{4} e_1$, and the optimal solution $x^*$ satisfies $\left(A + \frac{4m}{L_1-m} \right) x^* = e_1$.

\subsubsection{Funnel Function}\label{sec:funnel}
We finally consider the following funnel-shaped function 
\begin{align}
\label{fn:funnel}
f_5(x) = \log \left( 1 + 10 \sqrt{(x-1)^T (x-1)} \right) \, ,
\end{align}
where $x \in \reals^n$. The minimizer of this function is $x^* = 1$ with $f_5(x^*) = 0$. Its derivative for $x\neq 1$ is $\nabla f_5(x) = 10 /(1+10  \sqrt{(x-1)^T (x-1)} ) \cdot \sign{(x-1)}$. A one-dimensional graph of $f_5$ is shown in the left panel of Figure~\ref{fig:figure6}. The function $f_5$ arises from a strictly monotone transformation of $f_1$ and thus belongs to the class of strictly quasiconvex functions.

\subsection{Numerical Optimization Results}
To illustrate the performance of Random Pursuit in comparison with the other randomized methods we here present and discuss the key numerical results. For all numerical tests we follow the identical protocol. All algorithms use as starting point $x_0 = 0$ for all test functions. In order to compare the performance of the different algorithms across different test functions, we follow Nesterov's approach \cite{nesterov:randomgradientfree} and report relative solution accuracies with respect to the scale $S \approx \frac{1}{2} L_1 R^2$ where $R := \norm{x_0 - x^*}$ is the Euclidean distance between starting point and optimal solution of the respective function. The properties of the four convex and continuously differentiable test functions and the quasiconvex funnel function along with the upper bounds on $R^2$ and the corresponding scales $S$ are summarized in Table~\ref{tab:testfunctions}.         
\begin{table}[H]
	\centering
	\scalebox{0.95}{
	\begin{tabular}{|>{$}l<{$} |l|l|l|l|l|>{$}l<{$}|}
	\hline
	   & \textsc{Name} & function class & $L_1$ & $m$ & $R^2$ & $S$ \\ \hline
		f_1 & \textsc{Sphere} & strongly convex & 1 & 1  & n & \frac{1}{2} n\\ \hline
		f_2 & \textsc{Ellipsoid} & strongly convex& 1000 & 1  & n & 50 n\\ \hline
		f_3 & \textsc{Nesterov smooth}  & convex & 1000 & $\approx \frac{L_1}{4(n+1)^2}$  & $\frac{n+1}{3}$ & 500 \cdot \frac{n+1}{3}\\ \hline
		f_4 & \textsc{Nesterov strong} & strongly convex & 1000 & 1  & $\frac{\sqrt{1000}}{4}$ & 1000 \\ \hline
		f_5 & \textsc{Funnel} & not convex & - & - & n & \frac{1}{2} n \\ \hline
	\end{tabular}
	}
	\caption{Test functions with parameters $L_1$, $m$, $R$ and the used scale $S$.}
	\label{tab:testfunctions}
\end{table}
Due to the inherent randomness of a single search run we perform 25 runs for each pair of problem instance/algorithm with different random number seeds. We compare the different methods based on two record values: (i) the minimal, mean, and maximum number of \emph{iterations} (ITS) and (ii) the minimal, mean, and maximum number of  \emph{function evaluations} (FES) needed to reach a certain solution accuracy. While the former records serve as a means to compare the number of oracle calls in the different method, the latter one only considers evaluations of the objective function as relevant performance cost. It is evident that measuring the performance of the algorithms in terms of oracle calls favors Random Pursuit because the line search oracle ``does more work" than an oracle that, for instance, provides a directional derivative. For Random Gradient methods the number of FES is just twice the number of ITS when a first-order finite difference scheme is used for directional derivatives. For the $\mathcal{ES}$ algorithm the number of ITS and FES is identical. For Random Pursuit methods the relation between ITS and FES depends on the specific test function, the line search parameter $\mu$, and the actual implementation of the line search. 
Our theoretical investigation suggest that the randomized schemes are a factor of $O(n)$ times slower than the first-order $\mathcal{GM}$ algorithms. This is due the reduced available (direction) information in the randomized methods compared to the $n$-dimensional gradient vector. For better comparison with $\mathcal{GM}$ and $\mathcal{GM}_{LS}$, we thus scale the number of ITS of the randomized schemes by a factor of $1/n$.

\subsubsection{Performance on the Quadratic Test Functions for $\mathbf{n\le1024}$}
We first consider the two quadratic test functions in $n=2^2,\ldots,2^{10}$ dimensions. Table~\ref{tab:sphere2} summarizes the minimum, maximum, and mean number of ITS (in blocks of size $n$) needed for each algorithm to reach the absolute accuracy $1.91\cdot10^{-6}S$ on the sphere function $f_1$. For the first-order $\mathcal{GM}$ algorithms the absolute number of ITS is reported.  
\begin{table}[ht]
	\centering
	\scalebox{0.62}{
	\begin{tabular}{ | r | r r r | r r r | r r r |  r r r |  r r r | r | r |}
  \hline
  \multicolumn{1}{|r|}{ } & \multicolumn{3}{c|}{$\mathcal{RP}$}  & \multicolumn{3}{c|}{$\mathcal{RG}$} 
						  & \multicolumn{3}{c|}{$\mathcal{FG}$}
                          & \multicolumn{3}{c|}{$\mathcal{ARP}$} & \multicolumn{3}{c|}{$\mathcal{ES}$}
                          & \multicolumn{1}{c|}{$\mathcal{GM}$} 
                          & \multicolumn{1}{c|}{$\mathcal{GM}_{LS}$}   \\
  n  &  min & max & mean & min & max & mean & min & max & mean & min & max & mean & min & max & mean & - & - \\
  \hline 
4   &   5 & 17 & 10 &   40 & 65 & 53  &  31 & 49 & 39 &    5 & 17 & 10 &  28 & 46 & 38 &      1 &   1\\
8   &   8 & 16 & 12 &   39 & 53 & 44  &  30 & 40 & 35 &    5 & 13 & 11 &  28 & 43 & 35 &      1 &   1\\
16  &  10 & 14 & 12 &   33 & 41 & 37  &  30 & 37 & 33 &   10 & 14 & 12 &  30 & 42 & 36 &      1 &   1\\
32  &  11 & 14 & 12 &   31 & 36 & 33  &  28 & 35 & 31 &   11 & 16 & 12 &  33 & 41 & 37 &      1 &   1\\
64  &  12 & 14 & 13 &   30 & 34 & 32  &  28 & 33 & 31 &   12 & 14 & 13 &  33 & 41 & 37 &      1 &   1\\
128 &  12 & 14 & 13 &   30 & 32 & 31  &  29 & 32 & 31 &   12 & 14 & 13 &  35 & 40 & 37 &      1 &   1\\
256 &  13 & 14 & 13 &   30 & 31 & 30  &  29 & 31 & 30 &   13 & 14 & 13 &  35 & 40 & 37 &      1 &   1\\
512 &  13 & 13 & 13 &   30 & 31 & 30  &  30 & 31 & 30 &   13 & 14 & 13 &  36 & 38 & 37 &      1 &   1\\
1024&  13 & 14 & 13 &   30 & 31 & 30  &  30 & 31 & 30 &   13 & 13 & 13 &  36 & 38 & 37 &      1 &   1\\
  \hline  
\end{tabular}	

	}
  \caption{Recorded minimum, maximum, and mean \#ITS/$n$ on the sphere function $f_1$ to reach a relative accuracy of $1.91 \cdot 10^{-6}$. For $\mathcal{GM}$ and $\mathcal{GM}_{LS}$ the absolute number of ITS are recorded.}
	\label{tab:sphere2}
\end{table}
Three key observations can be made from these data. First, all zeroth-order algorithms approach the theoretically expected \emph{linear scaling of the run time with dimension} for strongly convex functions for sufficiently large $n$ (for $n\ge64$, e.g., the average number of ITS/n becomes constant for the $\mathcal{RP}$ algorithms). 
Second, no significant performance difference can be found between $\mathcal{RP}$ and its accelerated version across all dimensions. The performance of the algorithm pair $\mathcal{RG} / \mathcal{FG}$ becomes similar for $n\ge128$. 
Third, the Random Pursuit algorithms outperform all other zeroth-order methods in terms of number of ITS. Only the last observation changes when the number of FES is considered. Table~\ref{tab:sphere2_f} summarizes the number of FES (in blocks of size $n$) for all algorithms on $f_1$.
\begin{table}[ht]
	\centering
	\scalebox{0.62}{
	\begin{tabular}{ | r | r r r | r r r | r r r |  r r r |  r r r |}
  \hline
  \multicolumn{1}{|r|}{ } & \multicolumn{3}{c|}{$\mathcal{RP}$}  & \multicolumn{3}{c|}{$\mathcal{RG}$} 
                          & \multicolumn{3}{c|}{$\mathcal{FG}$}
                          & \multicolumn{3}{c|}{$\mathcal{ARP}$} & \multicolumn{3}{c|}{$\mathcal{ES}$}	\\
  n  &  min & max & mean & min & max & mean & min & max & mean & min & max & mean & min & max & mean\\
  \hline 
4   & 20 & 69 & 39 &   80 & 131 & 106 &   62 & 99 & 78 &   20 & 69 & 39 &   28 & 46 & 38 \\
8   & 34 & 65 & 47 &   78 & 105 &  87 &   59 & 81 & 70 &   22 & 53 & 43 &   28 & 43 & 35 \\
16  & 38 & 54 & 48 &   65 &  81 &  73 &   61 & 74 & 66 &   40 & 56 & 47 &   30 & 42 & 36 \\
32  & 45 & 57 & 50 &   62 &  71 &  66 &   57 & 69 & 62 &   43 & 62 & 50 &   33 & 41 & 37 \\
64  & 47 & 57 & 52 &   60 &  68 &  64 &   57 & 66 & 61 &   50 & 56 & 52 &   33 & 41 & 37 \\
128 & 50 & 56 & 53 &   59 &  64 &  62 &   58 & 64 & 61 &   51 & 56 & 54 &   35 & 40 & 37 \\
256 & 56 & 63 & 59 &   59 &  62 &  61 &   58 & 62 & 60 &   56 & 63 & 59 &   35 & 40 & 37 \\
512 & 64 & 69 & 67 &   59 &  62 &  60 &   59 & 62 & 60 &   64 & 70 & 67 &   36 & 38 & 37 \\
1024& 84 & 92 & 89 &   59 &  61 &  60 &   60 & 61 & 60 &   85 & 91 & 88 &   36 & 38 & 37 \\
  \hline  
\end{tabular}	

	}
  \caption{Recorded minimum, maximum, and mean \#FES/$n$ on the sphere function $f_1$ to reach a relative accuracy of $1.91 \cdot 10^{-6}$.}
	\label{tab:sphere2_f}
\end{table}
We see that the $\mathcal{RP}_{\mu}$ algorithms outperform the Random Gradient methods for low dimensions and perform equally well for $n=256$. For $n\ge512$ the Random Gradient schemes become increasingly superior to the Random Pursuit schemes. Remarkably, the adaptive step size $\mathcal{ES}$ algorithm outperforms all other methods across all dimensions. The data also reveal that the line search oracle in the $\mathcal{RP}_{\mu}$ algorithms consume on average \emph{four FES per iteration} for $n\le128$ with a slight increase to \emph{seven FES per iteration} for $n=1024$. We also observe that the gap between minimum and maximum number of FES reduces with increasing dimension for all methods. Finally, the first-order schemes reach the minimum as expected in a single iteration across all dimension. 

For the high-conditioned ellipsoidal function $f_2$ we observe a genuinely different behavior of the different algorithms. Figure~\ref{fig:figure1} shows for each algorithm the mean number of FES (left panel) and ITS (right panel) in blocks of size $n$ needed to reach the absolute accuracy $1.91\cdot10^{-6}S$ on $f_2$. The minimum, maximum, and mean number of ITS and FES are reported in the Appendix in Tables~\ref{tab:ellipsoid2} and \ref{tab:ellipsoid2_f}, respectively.
\begin{figure}[ht]
\begin{minipage}[b]{0.49\linewidth}
\centering
\includegraphics[scale=.51]{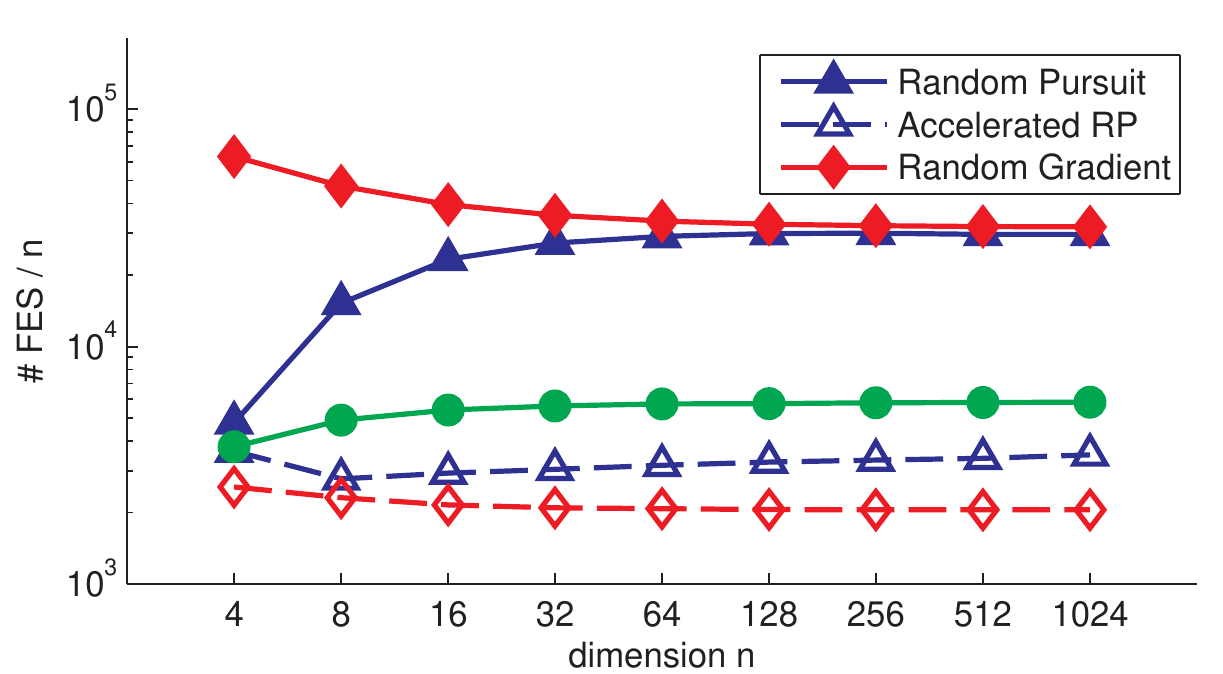}
\end{minipage}
\hspace{0.1cm}
\begin{minipage}[b]{0.49\linewidth}
\centering
\includegraphics[scale=.51]{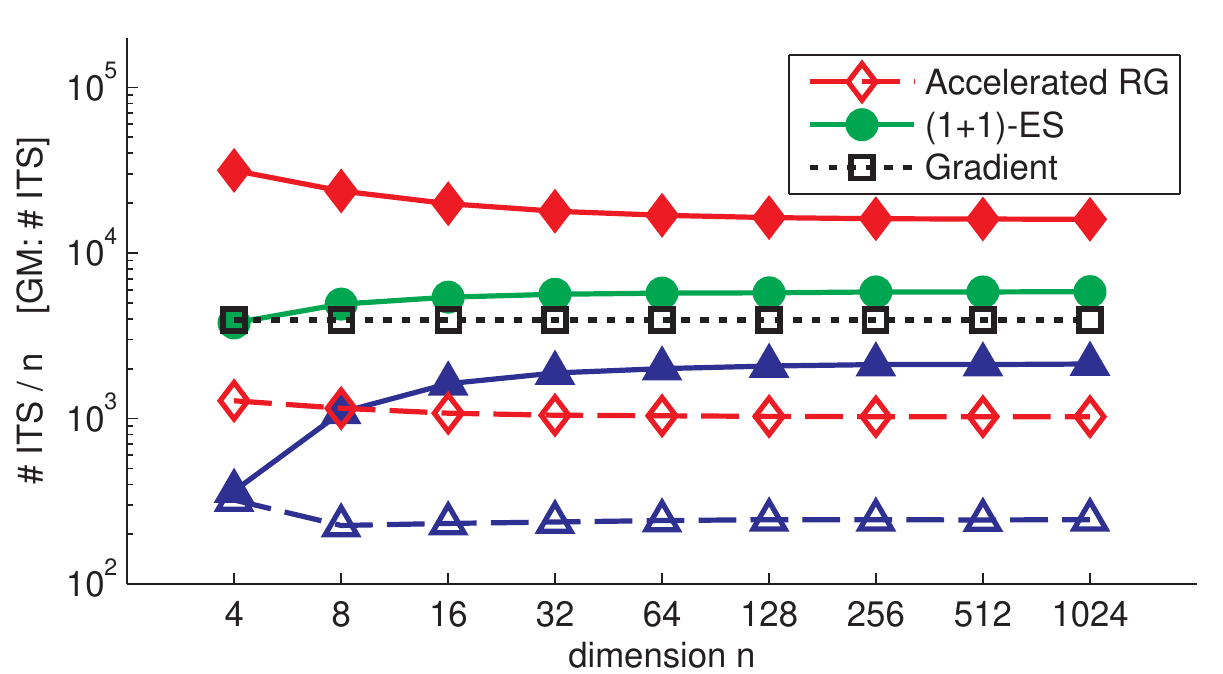}
\end{minipage}
\caption{Average \#FES/$n$ (left panel) and \#ITS/$n$ (right panel) vs. dimension $n$ on the ellipsoidal function $f_2$ to reach a relative accuracy of $1.91 \cdot 10^{-6}$ (\#ITS for $\mathcal{GM}$). Further data are available in Tables~\ref{tab:ellipsoid2} and~\ref{tab:ellipsoid2_f}.}
\label{fig:figure1}
\end{figure}

%
We again observe the theoretically expected linear scaling of the number of ITS with dimension for sufficiently large $n$. The mean number of ITS now spans two orders of magnitude for the different algorithms. Standard Random Pursuit outperforms the $\mathcal{RG}$ and the $\mathcal{ES}$ algorithm. 
%
%
Moreover, the accelerated $\mathcal{RP}_{\mu}$ scheme outperforms the $\mathcal{FG}$ scheme by a factor of 4. All methods show, however, an increased overall run time due to the high condition number of the quadratic form. This is also reflected in the increased number of FES that are needed by the line search oracle in the $\mathcal{RP}_{\mu}$ algorithms. The line search oracle now consumes on average \emph{12-14 FES per iteration}. 

In terms of consumed FES budget we observe that Random Pursuit still outperforms Random Gradient for small dimensions but needs a comparable number of FES for $n\ge64$ (around 30.000 FES in blocks of $n$). The $\mathcal{ES}$, the $\mathcal{ARP}_{\mu}$, and the $\mathcal{FG}$ algorithm need an order of magnitude fewer FES. The accelerated $\mathcal{RP}_{\mu}$ is only outperformed by the $\mathcal{FG}$ algorithm. The performance of the $\mathcal{ES}$ algorithm is again remarkable given the fact that it does not need information about the parameters $L_1$ and $m$ which are of fundamental importance for the accelerated schemes. 

\subsubsection{Performance on the Full Benchmark Set for $\mathbf{n=64}$} We now illustrate the behavior of the different algorithms on the full benchmark set for fixed dimension $n=64$. We observed similar qualitative behavior for all other dimensions. Table~\ref{tab:sum1} contains  the number of ITS needed to reach the scale-dependent accuracy $1.91 \cdot 10^{-6}S$ for all algorithms.
\begin{table}[ht]
	\centering
	\scalebox{0.60}{
	\begin{tabular}{ | r | r r r | r r r | r r r |  r r r |  r r r | r | r |}
  \hline
  \multicolumn{1}{|r|}{ } & \multicolumn{3}{c|}{$\mathcal{RP}$}  & \multicolumn{3}{c|}{$\mathcal{RG}$} 
						  & \multicolumn{3}{c|}{$\mathcal{FG}$}
                          & \multicolumn{3}{c|}{$\mathcal{ARP}$} & \multicolumn{3}{c|}{$\mathcal{ES}$}	
                          & \multicolumn{1}{c|}{$\mathcal{GM}$} 
                          & \multicolumn{1}{c|}{$\mathcal{GM}_{LS}$}  \\  
  fun.  &  min & max & mean & min & max & mean  &  min & max & mean & min & max & mean & min & max & mean & - & - \\						  
\hline 
$f_1$&   \textbf{12}&   \textbf{14}&  \textbf{13}&     30&     34&     32&    30& 34  & 32  &   \textbf{12}&  \textbf{14} &  \textbf{13} & 33   & 41  & 37  &      1&  1   \\
$f_2$&          1899&         2096 &        2001 &  16601&  17333&  16868&   990& 1079& 1038&  \textbf{233}&  \textbf{250}&  \textbf{242}&  5451& 5954& 5729&   3934&  3   \\
$f_3$&          2068&         2191 &       2136  &  18922&  19075&  19004&   892& 970 & 942 &  \textbf{192}& \textbf{678} & \textbf{473} & 5766 & 6050& 5916&  4474 &  2237\\
$f_4$&          954 &        1023  &         995 &   8727&   8995&   8854&   441& 534 & 458 &  \textbf{137}&  \textbf{188}&  \textbf{159}&  2651& 2854& 2751&  2086 &  1044 \\
$f_5$&  \textbf{26} & \textbf{30}  & \textbf{28} &  -    &  -    &  -    &   -  & -   & -   & \textbf{26}  & \textbf{30}  & \textbf{28}  & 73   & 85  & 78  & -     &  1    \\
\hline  
\end{tabular}	

	}
  \caption{Average \#ITS/$n$ to reach the relative accuracy $1.91 \cdot 10^{-6}$ in dimension $n=64$. For $\mathcal{GM}$ and $\mathcal{GM}_{LS}$ the exact number of ITS is reported. Observed minimum ITS across all (gradient-free) algorithms are marked in bold face for each function.}
	\label{tab:sum1}
\end{table}
We observe that Random Pursuit outperforms the $\mathcal{RG}$ and the $\mathcal{ES}$ algorithm, and that the $\mathcal{ARP}_{\mu}$ algorithm outperforms all gradient-free schemes in terms of number of ITS on all functions (with equal performance as Random Pursuit on $f_1$ and $f_5$). 
We consistently observe an improved performance of all algorithms on the regularized strongly convex function $f_4$ as compared to its convex counterpart $f_3$. This expected behavior is most pronounced for the $\mathcal{ARP}_{\mu}$ scheme where, on average, the number of ITS is reduced to $159/473 \approx 1/3$. 

A comparison between the two gradient schemes reveals that $\mathcal{GM}_{LS}$ outperforms the fixed step size gradient scheme on all test functions. The remarkable performance of $\mathcal{GM}_{LS}$ on $f_2$ is due to the fact that the spectrum of the Hessian contains in equal parts two different values (1 and $L$,  respectively). A \emph{single} line search along a gradient direction is thus simultaneously optimal for $n/2$ directions of this function. The $\mathcal{GM}_{LS}$ scheme thus reaches the target accuracy in as few as three steps. This efficiency is lost as soon as the spectrum becomes sufficiently diverse (as indicated by its performance on $f_4$). We also remark that the performance of $\mathcal{RP}/\mathcal{GM}_{LS}$ as well as the pair $\mathcal{FG}/\mathcal{GM}$ is in full agreement with theory. We see on functions $f_3/f_4$ that $\mathcal{RP}$ is about a factor of $n$ slower than the $\mathcal{GM}_{LS}$ due to unavailable gradient information. The same is true for $\mathcal{FG}/\mathcal{GM}$ where $\mathcal{FG}$ is about $4 n$ times slower than $\mathcal{GM}$ due to the theoretically needed reduction of the optimal step length by a factor of $1/4$ \cite{nesterov:randomgradientfree}.

For function $f_4$ we illustrate the convergence behavior of the different algorithms in Figure~\ref{fig:figure3}. After a short algorithm-dependent initial phase we observe linear convergence of all algorithms for fixed dimension, i.e., a constant reduction of the logarithm of the distance to the minimum per iteration.    
\begin{figure}[ht]
\centering
\includegraphics[scale=.8]{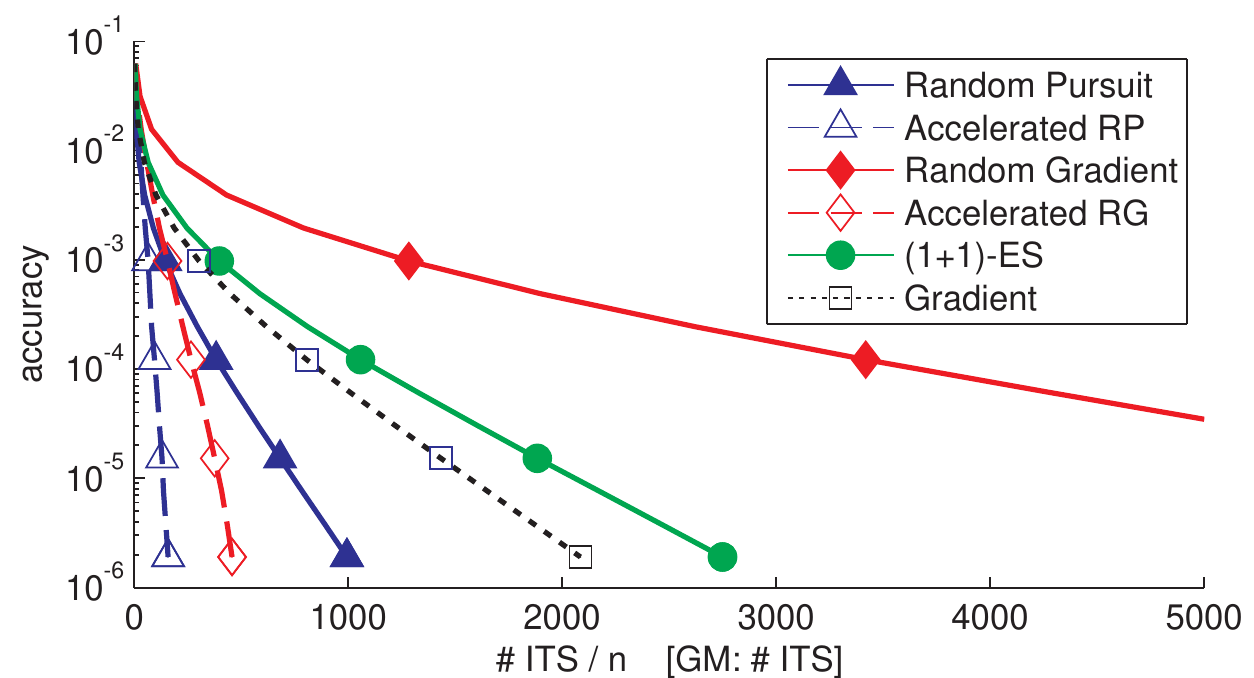}
\caption{Average accuracy (in log scale) vs. \#ITS/$n$ for all algorithms on $f_4$ in dimension $n=64$. Further data are available in~\cite{Stich:2011s} in Table S-4.}
\label{fig:figure3}
\end{figure}
We also observe that the accelerated Random Pursuit consistently outperforms standard Random Pursuit for all measured accuracies on $f_4$ (see Table~S-4 in~\cite{Stich:2011s} for the corresponding numerical data). This behavior is less pronounced for the function pair $f_1/f_2$ as shown in Figure~\ref{fig:figure4}. 
\begin{figure}[ht]
\centering
\includegraphics[scale=.7]{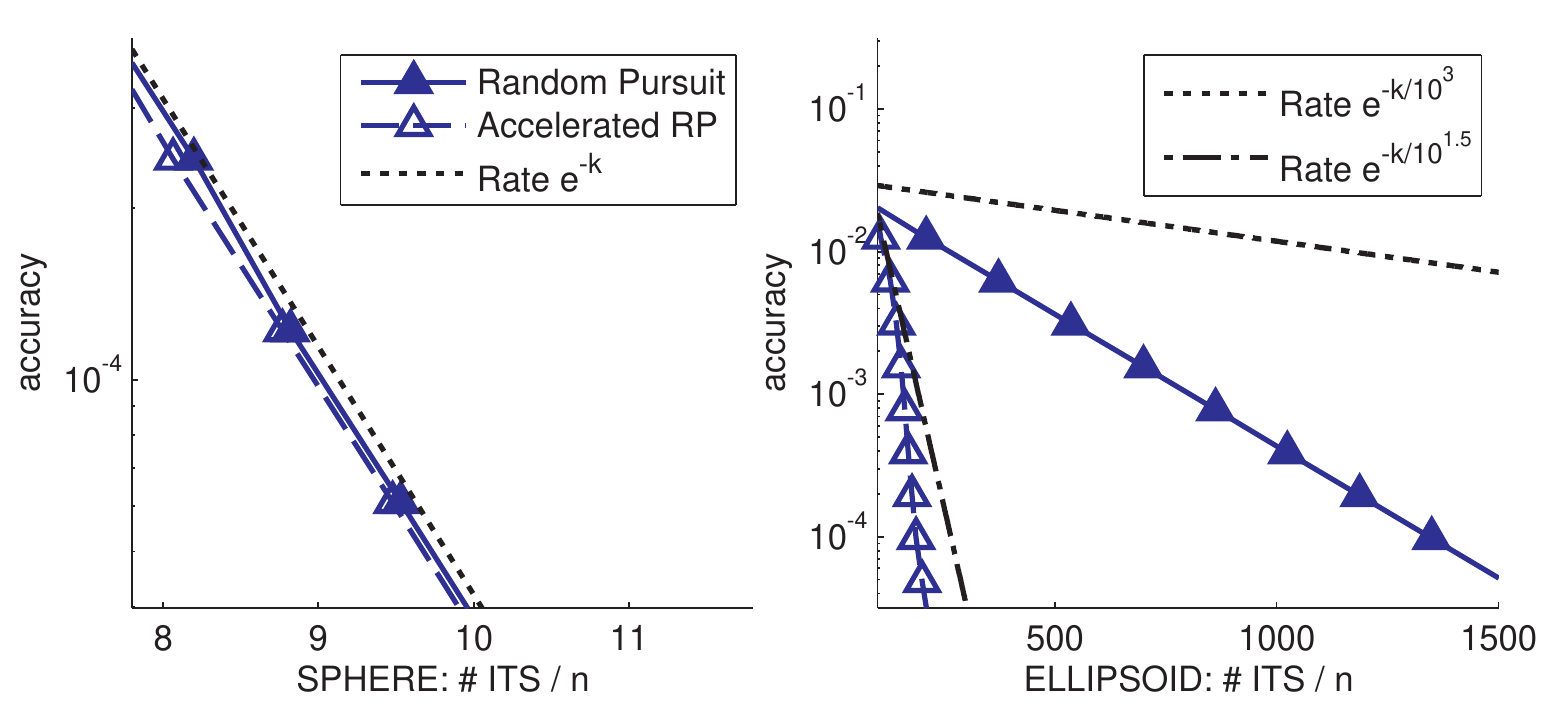}
\caption{Numerical convergence rate of standard and accelerated Random Pursuit on $f_1$ (left panel) and $f_2$ (right panel) in dimension $n=64$. For both instances the theoretically predicted worst-case progress rate (dotted line) is shown for comparison. The rate of the $\mathcal{ARP}$ scheme is compared to the theoretically predicted convergence rate of $\mathcal{FG}$ (slash-dotted line) \cite{nesterov:randomgradientfree}.}
\label{fig:figure4}
\end{figure}
On $f_1$ both Random Pursuit schemes have identical progress rates that are also consistent with the theoretically predicted one. On $f_2$ Random Pursuit  outperforms the accelerated scheme for low accuracies (see also Table~S-2 in~\cite{Stich:2011s} for the numerical data) but is quickly outperformed due to faster progress rate of the accelerated scheme. We also observe that the theoretically predicted worst-case progress rate (dotted line in the right panel of Figure~\ref{fig:figure4}) does not reflect the true progress on this test function. Comparison of the numerical results on the function pair $f_1/f_5$ (see Figure~\ref{fig:figure5}) demonstrates the expected invariance under strictly monotone transformations of the Random Pursuit algorithms and the $\mathcal{ES}$ scheme. 
These algorithms enjoy the same convergence behavior (up to small random variations) to the target solution while the Random Gradient schemes fail to converge to the target accuracy. Note, however, the numbers reported in e.g. Table~\ref{tab:sum1} are not identical for $f_1$ and $f_5$ because the used stopping criteria are dependent on the scale of the \emph{function values}. The convergence rates are the same, but more iterations are needed for $f_5$ because the required accuracy is considerably smaller.

\begin{figure}[ht]
\centering
\includegraphics[scale=.8]{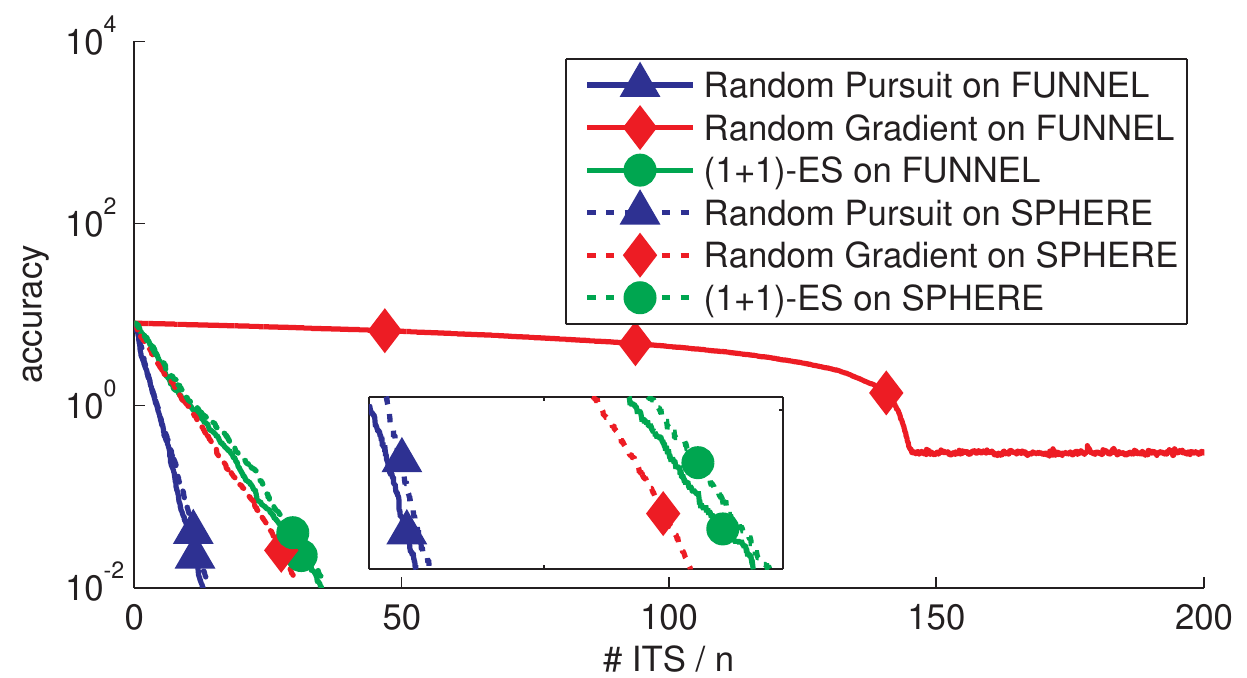}
\caption{Numerical convergence rate of the $\mathcal{RP}_{\mu}$, the $\mathcal{ES}$, and the $\mathcal{RG}$ scheme on $f_1$ and $f_5$ in $n=64$ dimensions. The accuracy is measured in terms of the logarithmic distance to the optimum $\log\left(\norm{x_k-x^*}_2\right)$.}
\label{fig:figure5}
\end{figure}

We also report the performance of the different algorithms in terms of number of FES needed to reach the target accuracy of  $1.91 \cdot 10^{-6}S$ for the different test functions. For all algorithms the minimum, maximum, and average number of FES are recorded in Table~\ref{tab:sum2}.  
\begin{table}[ht]
	\centering
	\scalebox{0.62}{
	\begin{tabular}{ | r | r r r | r r r |  r r r |  r r r |  r r r |}
  \hline
  \multicolumn{1}{|r|}{ } & \multicolumn{3}{c|}{$\mathcal{RP}$}  & \multicolumn{3}{c|}{$\mathcal{RG}$} 
                          & \multicolumn{3}{c|}{$\mathcal{FG}$}
                          & \multicolumn{3}{c|}{$\mathcal{ARP}$} & \multicolumn{3}{c|}{$\mathcal{ES}$}	\\
  fun. &  min & max & mean & min & max & mean & min & max & mean & min & max & mean & min & max & mean\\
  \hline 
$f_1$&  47   & 57   & 52   &   60   & 68   & 64   &   57           & 66           & 61           &   50  & 56  & 52  &   \textbf{33}& \textbf{41}& \textbf{37} \\
$f_2$& 27723 & 30272& 29071&   33202& 34667& 33736&   \textbf{1980}& \textbf{2159}& \textbf{2077}&   3035& 3247& 3159&   5451       & 5954       & 5729 \\
$f_3$&  25520& 27034& 26351&   37844& 38150& 38008&   \textbf{1785}& \textbf{1939}& \textbf{1885}&   2199& 8149& 5609&   5766       & 6050       & 5916 \\
$f_4$&  11629& 12482& 12122&   17455& 17990& 17708&   \textbf{883} & \textbf{1069}& \textbf{916} &   1557& 2134& 1825&   2651       & 2854       & 2751  \\
$f_5$& 338   & 384  & 360  &   -    & -    & -    &   -            & -            & -            &   342 & 399 & 361 &   \textbf{73}& \textbf{85}& \textbf{78} \\
  \hline  
\end{tabular}	

	}
  \caption{Average \#FES$/n$ to reach the relative accuracy $1.91 \cdot 10^{-6}$ in dimension $n=64$. Observed minimum \#FES/$n$ across all algorithms are marked in bold face for each function.}
	\label{tab:sum2}
\end{table}
We observe that the $\mathcal{RP}_{\mu}$ algorithm outperforms the standard Random Gradient method on all tested functions. However, Random Pursuit is not competitive compared to the accelerated schemes and the $\mathcal{ES}$ algorithm. The accelerated $\mathcal{RP}_{\mu}$ scheme is only outperformed by the $\mathcal{FG}$ algorithm. The latter scheme shows particularly good performance on the convex function $f_3$ with considerably lower variance. For functions $f_2$--$f_4$ the $\mathcal{RP}_{\mu}$ algorithms need around $12-15$ FES per line search oracle call. We emphasize again that the performance of the adaptive step size $\mathcal{ES}$ scheme is remarkable given the fact that it does not need any function-specific parametrization. A comparison to the parameter-free Random Pursuit scheme shows that it needs around four times fewer FES on functions $f_2$--$f_4$. 

We remark that Random Pursuit with discrete sampling, i.e., using the set of signed unit vectors for sample generation (see Section \ref{subsec:discSampl}), yields numerical results on the present benchmark that are consistent with our theoretical predictions. We observed improved performance of Random Pursuit with discrete sampling on the function triplet $f_1/f_2/f_5$. This is evident as the coordinate system of these functions coincide with the standard basis. Thus, algorithms that move along the standard coordinate system are favored. On the function pair $f_3/f_4$ we do not see any significant deviation from the presented performance results.
\begin{table}[ht]
	\centering
	\scalebox{0.62}{
	\begin{tabular}{ | r | r | r | r | r | r | r | r | r | r | r |}
  \hline
  \multicolumn{1}{|r|}{ param. $\mu$: } & \multicolumn{1}{c|}{1\textsc{e}$-1$} 
                          & \multicolumn{1}{c|}{1\textsc{e}$-2$} 
                          & \multicolumn{1}{c|}{1\textsc{e}$-3$} 
                          & \multicolumn{1}{c|}{1\textsc{e}$-4$} 
                          & \multicolumn{1}{c|}{1\textsc{e}$-5$} 
                          & \multicolumn{1}{c|}{1\textsc{e}$-6$} 
                          & \multicolumn{1}{c|}{1\textsc{e}$-7$} 
                          & \multicolumn{1}{c|}{1\textsc{e}$-8$} 
                          & \multicolumn{1}{c|}{1\textsc{e}$-9$} 						  
                          & \multicolumn{1}{c|}{1\textsc{e}$-10$} \\
  \hline 
\#ITS / n &  1986  &  1982  & 2013 &  2017  & 2001 & 2001 & 2001   &  2001  & 2020  & 2020   \\
\# FES / n &  19824  &  19781  &  21435  &  26120 & 29071 &  29495 & 29537   &  29542  &  38993  & 39070   \\
  \hline  
\end{tabular}	

	}
  \caption{Performance of $\mathcal{RP}_\mu$ on the ellipsoid function $f_2$, $m=1$, $L=1000$, $S=3200$, $n=64$, for different line search parameters $\mu$. Mean (of 25 repetitions) number of ITS/$n$ and FES/$n$ to reach a relative accuracy of $1.91 \cdot 10^{-6}$ are reported.}
	\label{tab:mu}
\end{table}\\
We also exemplify the influence of the parameter $\mu$ on $\mathcal{RP}_{\mu}$'s performance. We choose the function $f_2$ as test instance because the $\mathcal{RP}_{\mu}$ consumes most FES on this function. We vary $\mu$ between $1\textsc{e}$-1 and  $1\textsc{e}$-10 and run the $\mathcal{RP}_{\mu}$ scheme 25 times to reach a relative accuracy of $1.91 \cdot 10^{-6}$. Mean number of ITS and FES are reported in Table \ref{tab:mu}. We see that the choice of $\mu$ has almost no influence on the number of ITS to reach the target accuracy, thus justifying the use of ITS as meaningful performance measure. The number of FES span the same order of magnitude ranging from $19824$ for $1\textsc{e}$-1 to $39070$ for $1\textsc{e}$-10. We see that the number of FES for the standard setting $\mu=1\textsc{e}$-5 is approximately in the middle of these extremes (29071 FES). This implies that the qualitative picture of the reported performance comparison is still valid but individual results for $\mathcal{RP}_{\mu}$ and $\mathcal{ARP}_{\mu}$ are improvable by optimally choosing $\mu$. An in-depth analysis of the optimal function-dependent choice of the $\mu$ parameter is subject to future studies.
 
As a final remark we highlight that the present numerical results for the Random Gradient methods are fully consistent with the ones presented in Nesterov's paper \cite{nesterov:randomgradientfree}.

\section{Discussion and Conclusion}
\label{sec:concl}

We have derived a convergence proof and convergence rates for Random Pursuit on convex functions. We have used a quadratic upper bound technique to bound the expected single-step progress of the algorithm.
Assuming exact line search, this results in global linear convergence for strongly convex functions and convergence of the order $1/k$ for general convex functions.

For line search oracles with relative error $\mu$ the same results have been obtained with convergence rates reduced by a factor of $\frac{1}{1-\mu}$. 
For inexact line search with absolute error $\mu$, convergence can be established only up to an additive error term depending on $\mu$, the properties of the function and the dimensionality.

The convergence rate of Random Pursuit exceeds the rate of the standard (first-order) Gradient Method by a factor of $n$. J\"{a}gersk\"{u}pper showed that no better performance can be expected for strongly convex functions \cite{jaeg:07}. He derived a lower bound for algorithms of the form~\eqref{eq:random} where at each iteration the step size along the random direction is chosen such as to minimize the distance to the minimum $x^*$. On sphere functions $f(x)=(x-x^*)^T(x-x^*)$ Random Pursuit coincides with the described scheme, thus achieving the lower bound. 

The numerical experiments showed that (i) standard Random Pursuit is effective on strongly convex functions with moderate condition number, and (ii) the accelerated scheme is comparable to Nesterov's fast gradient method and outperforms the $\mathcal{ES}$ algorithm. 
The experimental results also revealed that (i) $\mathcal{RP}_{\mu}$'s empirical convergence is (as predicted by theory) $n$ times slower than the one of the corresponding gradient scheme with line search ($\mathcal{GM}_{LS}$), and (ii) both continuous and discrete sampling can be employed in Random Pursuit. We confirmed the invariance of the $\mathcal{RP}_{\mu}$ algorithms and $\mathcal{ES}$ under monotone transformations of the objective functions on the quasiconvex funnel-shaped function $f_5$ where Random Gradient algorithms fail. We also highlighted the remarkable performance of the $\mathcal{ES}$ scheme given the fact that it does not need any function-specific input parameters. 

The present theoretical and experimental results hint at a number of potential enhancements for standard Random Pursuit in future work. First, $\mathcal{RP}_{\mu}$'s convergence rate depends on the function-specific parameter $L_1$ that bounds the curvature of the objective function. Any reduction of this dependency would imply faster convergence on a larger class of functions. The empirical results on the function pair $f_1/f_2$ (see Tables~S-1 and S-2 in~\cite{Stich:2011s}) also suggest that complicated accelerated schemes do not present any significant advantage on functions with small constant $L_1$. It is conceivable that Random Pursuit can incorporate a mechanism to learn second-order information about the function ``on the fly", thus improving the conditioning of the original optimization problem and potentially reducing it to the $L_1 \approx 1$ case. This may be possible using techniques from randomized Quasi-Newton approaches \cite{betro:78,leventhal:11,Stich:2012} or differential geometry \cite{Cheng:2005}. It is noteworthy that heuristic versions of such an adaptation mechanism have proved extremely useful in practice for adaptive step size algorithms \cite{Kjellstrom:1981,Igel:2006,Mueller:2010a}

Second, we have not considered Random Pursuit for constrained optimization problems of the form: 
\begin{align}
 \label{eq:problem_const}
 \min f(x) \quad \text{subject to} \quad x \in \mathcal{K} \, ,
\end{align}
where $\mathcal{K} \subset \reals^n$ is a convex set. The key challenge is how to treat iterates $x_{k+1}=x_k + \lsapprox(x_k,u)\cdot u$ generated by the line search oracle that are outside the domain $\mathcal{K}$. A classic idea is to apply a projection operator $\pi_{\mathcal{K}}$ and use the resulting $x'_{k+1} := \pi_{\mathcal{K}}(x_{k+1})$ as the next iterate. However, finding a projection onto a convex set (except for simple bodies such as hyper-parallelepipeds) can be as difficult as the original optimization problem. 
Moreover, it is an open question whether general convergence can be ensured, and what convergence rates can be achieved. Another possibility is to constrain the line search to the intersection of the line and the convex body $\mathcal{K}$. In this case, it is evident that one can only expect exponentially slow convergence rates for this method. Consider the linear function $f(x)=1^Tx$ and $\mathcal{K}=\reals^n_+$. Once an iterate $x_{k}$ lies at the boundary $\partial \mathcal{K}$ of the domain, say the first coordinate of $x_k$ is zero, then only directions $u$ with positive first coordinate may lead to an improvement. As soon as a constant fraction of the coordinates are zero, the probability of finding an improving direction is exponentially small. Karmanov~\cite{karmanov:74} proposed the following combination of projection and line search constraining: First, a random point $y$ at some fixed distance of the current iterate is drawn uniformly at random and then projected to the set $\mathcal{K}$. A constrained line search is now performed along the line through the current iterate $x_k$ and $\pi_{\mathcal{K}}(y)$. It remains open to study the convergence rate of this method. 
\begin{figure}[ht]
\centering
\def\svgwidth{0.85\columnwidth}
\includegraphics[width=\svgwidth]{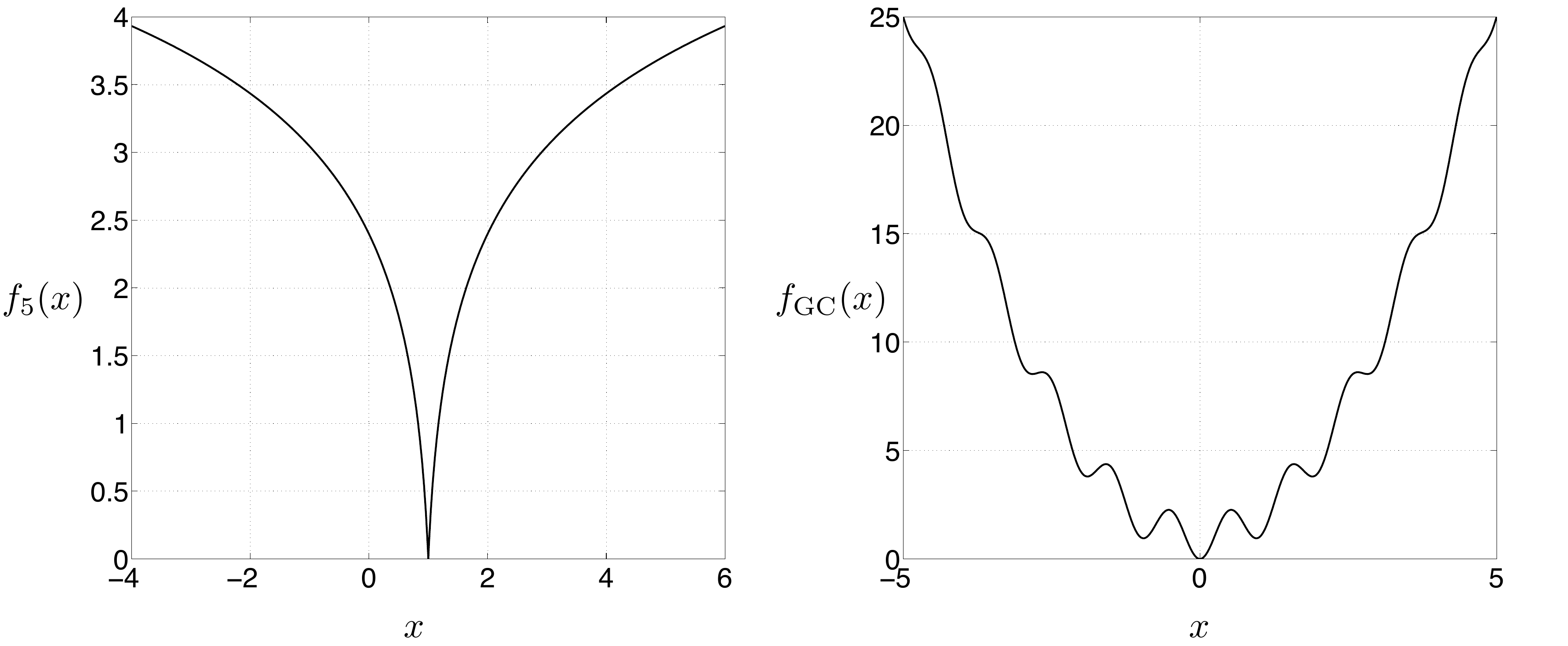}
\caption{Left panel: Graph of function $f_5$ in 1D. Right Panel:  Graph of a globally convex function $f_\text{GC}$.}
\label{fig:figure6}
\end{figure}

Finally, we envision convergence guarantees and provable convergence rates for Random Pursuit on more general function classes. The invariance of the line search oracle under strictly monotone transformations of the objective function already implied that Random Pursuit converges on certain strictly quasiconvex functions. 
It also seems in reach to derive convergence guarantees for Random Pursuit on the class of globally convex (or $\delta$-convex) functions \cite{Hu:1989} or on convex functions with bounded perturbations \cite{Phu:2010} (see right panel of Figure~\ref{fig:figure6} for the graph of such an instance). This may be achieved by appropriately adapting line search methods to these function classes. In summary, we believe that the theoretical and experimental results on Random Pursuit represent a promising first step toward the design of competitive derivative-free optimization methods that are easy to implement, possess theoretical convergence guarantees, and are useful in practice.      

\section*{Acknowledgments}
We sincerely thank Dr. Martin Jaggi for several helpful discussions. We would also like to thank the referees for their careful reading of the manuscript and their constructive comments that truly helped improve the quality of the manuscript.

\onecolumn
\bibliographystyle{siam}
\bibliography{bib}

\begin{thebibliography}{10}

\bibitem{anderson:53}
{\sc R.~L. Anderson}, {\em Recent advances in finding best operating
  conditions}, Journal of the American Statistical Association, 48 (1953),
  pp.~789--798.

\bibitem{Auger:2009}
{\sc A.~Auger, N.~Hansen, J.~M. Perez~Zerpa, R.~Ros, and M.~Schoenauer}, {\em
  {Experimental Comparisons of Derivative Free Optimization Algorithms}}, in
  Proceedings of the 8th International Symposium on Experimental Algorithms,
  SEA '09, Berlin, Heidelberg, 2009, Springer-Verlag, pp.~3--15.

\bibitem{betro:78}
{\sc B.~Betro and L.~De~Biase}, {\em {A Newton-like method for stochastic
  optimization}}, in Towards Global Optimization, vol.~2, North-Holland, 1978,
  pp.~269--289.

\bibitem{Beyer:2001}
{\sc H.~G. Beyer}, {\em The theory of evolution strategies}, Natural Computing,
  Springer-Verlag New York, Inc., New York, NY, USA, 2001.

\bibitem{Brif:2010}
{\sc C.~Brif, R.~Chakrabarti, and H.~Rabitz}, {\em Control of quantum
  phenomena: past, present and future}, New Journal of Physics, 12 (2010),
  p.~075008.

\bibitem{Brooks:1958}
{\sc S.~H. Brooks}, {\em {A Discussion of Random Methods for Seeking Maxima}},
  Operations Research, 6 (1958), pp.~244--251.

\bibitem{Cheng:2005}
{\sc H.~B. Cheng, Cheng~L. T., and S.~T. Yau}, {\em {Minimization with the
  affine normal direction}}, Comm.\ Math.\ Sci., 3 (2005), pp.~561--574.

\bibitem{conn:09}
{\sc A.~R. Conn, K.~Scheinberg, and L.~N. Vicente}, {\em Introduction to
  derivative-free optimization}, MPS-SIAM Book Series on Optimization, SIAM,
  2009.

\bibitem{Boef:2007}
{\sc E.~den Boef and D.~den Hertog}, {\em {Efficient Line Search Methods for
  Convex Functions}}, SIAM Journal on Optimization, 18 (2007), pp.~338--363.

\bibitem{hazan:2008}
{\sc E.~Hazan}, {\em Sparse approximate solutions to semidefinite programs}, in
  Proceedings of the 8th Latin American conference on Theoretical informatics,
  LATIN'08, Berlin, Heidelberg, 2008, Springer-Verlag, pp.~306--316.

\bibitem{heijmans:ratio}
{\sc R.~Heijmans}, {\em When does the expectation of a ratio equal the ratio of
  expectations?}, Statistical Papers, 40 (1999), pp.~107--115.
\newblock 10.1007/BF02927114.

\bibitem{Hu:1989}
{\sc T.~C. Hu, V.~Klee, and D.~Larman}, {\em Optimization of globally convex
  functions}, SIAM Journal on Control and Optimization, 27 (1989),
  pp.~1026--1047.

\bibitem{Igel:2006}
{\sc C.~Igel, T.~Suttorp, and N.~Hansen}, {\em {A computational efficient
  covariance matrix update and a (1+1)-CMA for evolution strategies}}, in
  {GECCO '06: Proceedings of the 8th annual conference on Genetic and
  evolutionary computation}, New York, NY, USA, 2006, ACM, pp.~453--460.

\bibitem{jaeg:05}
{\sc J.~J\"{a}gersk\"{u}pper}, {\em Rigorous runtime analysis of the (1+1)
  {ES}: 1/5-rule and ellipsoidal fitness landscapes}, in Foundations of Genetic
  Algorithms, Alden Wright, Michael Vose, Kenneth De~Jong, and Lothar Schmitt,
  eds., vol.~3469 of Lecture Notes in Computer Science, Springer Berlin /
  Heidelberg, 2005, pp.~356--361.
\newblock 10.1007/11513575.14.

\bibitem{jaeg:07}
\leavevmode\vrule height 2pt depth -1.6pt width 23pt, {\em Lower bounds for
  hit-and-run direct search}, in Stochastic Algorithms: Foundations and
  Applications, Juraj Hromkovic, Richard Kr\'{a}lovic, Marc Nunkesser, and
  Peter Widmayer, eds., vol.~4665 of Lecture Notes in Computer Science,
  Springer Berlin / Heidelberg, 2007, pp.~118--129.

\bibitem{Karmanov:1974b}
{\sc V.~G. Karmanov}, {\em Convergence estimates for iterative minimization
  methods}, USSR Computational Mathematics and Mathematical Physics, 14 (1974),
  pp.~1 -- 13.

\bibitem{karmanov:74}
\leavevmode\vrule height 2pt depth -1.6pt width 23pt, {\em On convergence of a
  random search method in convex minimization problems}, Theory of Probability
  and its applications, 19 (1974), pp.~788--794.
\newblock in Russian.

\bibitem{karnopp:63}
{\sc D.~C. Karnopp}, {\em Random search techniques for optimization problems},
  Automatica, 1 (1963), pp.~111 -- 121.

\bibitem{Kjellstrom:1981}
{\sc G.~Kjellstr\"om and L.~Taxen}, {\em {Stochastic Optimization in System
  Design}}, {IEEE Trans. Circ. and Syst.}, 28 (1981).

\bibitem{kleiner:rcp}
{\sc A.~Kleiner, A.~Rahimi, and M.~I. Jordan}, {\em Random conic pursuit for
  semidefinite programming}, in Neural Information Processing Systems, 2010.

\bibitem{Kolda:2003}
{\sc T.~G. Kolda, R.~M. Lewis, and V.~Torczon}, {\em Optimization by direct
  search: New perspectives on some classical and modern methods}, Siam Review,
  45 (2004), pp.~385--482.

\bibitem{krutikov:83}
{\sc V.~N. Krutikov}, {\em On the rate of convergence of the minimization
  method along vectors in given directional sy}, USSR Comput. Maths. Phys., 23
  (1983), pp.~154--155.
\newblock in russian.

\bibitem{leventhal:11}
{\sc D.~Leventhal and A.~S. Lewis}, {\em Randomized hessian estimation and
  directional search}, Optimization, 60 (2011), pp.~329--345.

\bibitem{maybach:66}
{\sc R.~L. Maybach}, {\em Solution of optimal control problems on a high-speed
  hybrid computer}, Simulation, 7 (1966), pp.~238--245.

\bibitem{Mueller:2010a}
{\sc C.~L. M\"uller and I.~F. Sbalzarini}, {\em {Gaussian adaptation revisited
  - an entropic view on covariance matrix adaptation}}, in {EvoApplications},
  {C. Di Chio et al.}, ed., no.~6024 in Lecture Notes in Computer Science,
  Springer, 2010, pp.~432--441.

\bibitem{rastrigin:64}
{\sc V.~A. Mutseniyeks and L.~A. Rastrigin}, {\em Extremal control of
  continuous multi-parameter systems by the method of random search}, Eng.
  Cybernetics, 1 (1964), pp.~82--90.

\bibitem{nemirovski:robust}
{\sc A.~Nemirovski, A.~Juditsky, G.~Lan, and A.~Shapiro}, {\em {R}obust
  {S}tochastic {A}pproximation {A}pproach to {S}tochastic {P}rogramming}, SIAM
  Journal on Optimization, 19 (2009), pp.~1574--1609.

\bibitem{nesterov:book}
{\sc Y.~Nesterov}, {\em {I}ntroductory {L}ectures on {C}onvex {O}ptimization},
  Kluwer, Boston, 2004.

\bibitem{nesterov:randomgradientfree}
\leavevmode\vrule height 2pt depth -1.6pt width 23pt, {\em Random
  {G}radient-{F}ree {M}inimization of {C}onvex {F}unctions}, tech. report,
  ECORE, 2011.

\bibitem{Phu:2010}
{\sc H.~X. Phu}, {\em Minimizing convex functions with bounded perturbation},
  SIAM Journal on Optimization, 20 (2010), pp.~2709--2729.

\bibitem{polyak:87}
{\sc B.~Polyak}, {\em Introduction to Optimization}, Optimization Software -
  Inc, Publications Division, New York, 1987.

\bibitem{MATLAB:2012}
{\sc MATLAB R2012a}, {\em
  http://www.mathworks.ch/help/toolbox/optim/ug/fminunc.html}.

\bibitem{rappl:89}
{\sc G.~Rappl}, {\em {On Linear Convergence of a Class of Random Search
  Algorithms}}, ZAMM - Journal of Applied Mathematics and Mechanics /
  Zeitschrift f{\"u}r Angewandte Mathematik und Mechanik, 69 (1989),
  pp.~37--45.

\bibitem{rastrigin:63}
{\sc L.~A. Rastrigin}, {\em The convergence of the random search method in the
  extremal control of a many parameter system}, Automation and Remote Control,
  24 (1963), pp.~1337--1342.

\bibitem{Rechenberg:1973}
{\sc I.~Rechenberg}, {\em Evolutionsstrategie; Optimierung technischer Systeme
  nach Prinzipien der biologischen Evolution.}, Frommann-Holzboog,
  Stuttgart--Bad Cannstatt, 1973.

\bibitem{schumer:68}
{\sc M.~Schumer and K.~Steiglitz}, {\em Adaptive step size random search},
  Automatic Control, IEEE Transactions on, 13 (1968), pp.~270 -- 276.

\bibitem{Stich:2012}
{\sc S.~U. Stich and C.~L. M\"uller}, {\em {On spectral invariance of
  Randomized Hessian and Covariance Matrix Adaptation schemes}}, under review
  at PPSN 2012,  (2012).

\bibitem{Stich:2011s}
{\sc S.~U. Stich, C.~L. M\"uller, and B.~G\"artner}, {\em Supporting online
  material for optimization of convex functions with random pursuit},
  arXiv:1111.0194v2,  (2012).

\bibitem{Sun:2012}
{\sc J.~Sun, J.~M. Garibaldi, and C.~Hodgman}, {\em {Parameter Estimation Using
  Metaheuristics in Systems Biology: A Comprehensive Review}}, IEEE/ACM Trans.
  Comput. Biol. Bioinformatics, 9 (2012), pp.~185--202.

\bibitem{Vempala:2010}
{\sc S.~Vempala}, {\em {Recent Progress and Open Problems in Algorithmic Convex
  Geometry}}, in IARCS Annual Conference on Foundations of Software Technology
  and Theoretical Computer Science (FSTTCS 2010), Kamal Lodaya and Meena
  Mahajan, eds., vol.~8 of Leibniz International Proceedings in Informatics
  (LIPIcs), Dagstuhl, Germany, 2010, Schloss Dagstuhl--Leibniz-Zentrum fuer
  Informatik, pp.~42--64.

\bibitem{Zhigljavsky:2008}
{\sc A.~A. Zhigljavsky and Zilinskas~A. G.}, {\em {Stochastic Global
  Optimization}}, {Springer-Verlag}, {Berlin, Germany}, 2008.

\bibitem{Ziel:83}
{\sc R.~Zieli\'{n}ski and P.~Neumann}, {\em Stochastische Verfahren zur Suche
  nach dem Minimum einer Funktion}, Akademie-Verlag, Berlin, Germany, 1983.

\end{thebibliography}

\appendix
\section{Lemma}

\begin{lemma}
\label{lemma:ind}
Let $\{f_t\}_{t \in \naturals}$ be a sequence with $f_i \in \reals_+$. Suppose
\begin{align*}
f_{t+1} &\leq \left(1-\theta / t  \right) f_t + C \theta^2/t^2 + D \, , \quad \text{for } t \geq 1,
\end{align*}
for some constants $\theta > 1$, $C > 0$ and $D \geq 0$. Then it follows by induction that 
\begin{align*}
f_t \leq Q(\theta)/t + (t-1) D\, ,
\end{align*}
where
$Q(\theta) = \max \left\{\theta^2 C / (\theta - 1) , f_1 \right\}$.
\end{lemma}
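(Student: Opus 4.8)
The plan is to prove Lemma~\ref{lemma:ind} by induction on $t$, exploiting the fact that the constant $Q(\theta)$ has been engineered precisely so that the inductive step closes. Write $g_t := Q(\theta)/t + (t-1)D$ for the claimed bound. The base case $t=1$ is immediate: $g_1 = Q(\theta) \ge f_1$ by the definition of $Q(\theta)$. For the inductive step I would assume $f_t \le g_t$ and feed this into the hypothesis $f_{t+1} \le (1-\theta/t)f_t + C\theta^2/t^2 + D$. Since the coefficient $1-\theta/t$ is nonnegative exactly when $t \ge \theta$, I would split into two cases.

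\emph{Main case ($t \ge \theta$).} Here $1-\theta/t \ge 0$, so monotonicity lets me substitute the inductive hypothesis to get
\[
f_{t+1} \le (1-\theta/t)\Bigl(\frac{Q(\theta)}{t} + (t-1)D\Bigr) + \frac{C\theta^2}{t^2} + D.
\]
I would then decompose the right-hand side into a ``$Q$-part'' and a ``$D$-part'' and compare each with $g_{t+1} = Q(\theta)/(t+1) + tD$. The $D$-part equals $\bigl((1-\theta/t)(t-1)+1\bigr)D = (t-\theta+\theta/t)D \le tD$ because $\theta/t \le \theta$ for $t \ge 1$, so it is dominated with room to spare. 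For the $Q$-part I need $(1-\theta/t)Q(\theta)/t + C\theta^2/t^2 \le Q(\theta)/(t+1)$, which after clearing denominators becomes the clean inequality $C\theta^2(t+1) \le Q(\theta)\bigl(t(\theta-1)+\theta\bigr)$. This is exactly where the bound $Q(\theta) \ge \theta^2 C/(\theta-1)$ is used: it yields both $Q(\theta)(\theta-1) \ge C\theta^2$ and, because $\theta^2/(\theta-1) \ge \theta$, also $\theta Q(\theta) \ge C\theta^2$; adding $t$ copies of the first inequality to one copy of the second gives precisely $Q(\theta)\bigl(t(\theta-1)+\theta\bigr) \ge C\theta^2(t+1)$.

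\emph{Remaining case ($1 \le t < \theta$).} Now $1-\theta/t < 0$ while $f_t \ge 0$, so I would simply discard that term to obtain $f_{t+1} \le C\theta^2/t^2 + D$; since $D \le tD$, it then remains to absorb $C\theta^2/t^2$ into $Q(\theta)/(t+1)$, i.e.\ to check $C\theta^2(t+1) \le Q(\theta)t^2$ for the finitely many integers $t$ with $1 \le t < \theta$.

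I expect this small-index regime to be the main obstacle. Unlike the main case it is not driven by the self-improving factor $(1-\theta/t)$ but purely by the additive contributions, so it is here that the precise form of $Q(\theta)$ must be pinned down and the estimate verified by hand (for the value $\theta=2$ relevant to Theorem~\ref{thm:convex_bounded} this means only the single step $t=1$). Everything else — the base case and the generic inductive case — reduces to the two routine decompositions above.
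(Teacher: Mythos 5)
Your base case and your main case $t\ge\theta$ are correct and, modulo bookkeeping, reproduce the paper's own computation: the paper likewise substitutes the inductive bound into $(1-\theta/t)f_t$, splits according to which term attains the maximum in $Q(\theta)$, and its algebra amounts to exactly your ``clean inequality'' $C\theta^2(t+1)\le Q(\theta)\bigl(t(\theta-1)+\theta\bigr)$ together with your estimate $\bigl((1-\theta/t)(t-1)+1\bigr)D\le tD$.

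The genuine gap is the case $1\le t<\theta$, which you correctly isolate but do not close --- and which cannot be closed the way you propose. Discarding $(1-\theta/t)f_t\le 0$ reduces the inductive step to $C\theta^2(t+1)\le Q(\theta)\,t^2$; for the value $\theta=2$ relevant to Theorem~\ref{thm:convex_bounded} and $t=1$ this reads $8C\le\max\{4C,f_1\}$, which fails whenever $f_1<8C$ (for instance $f_1=0$). So the step you defer to a ``by hand'' verification is not a routine check: it is false in the parameter regime that actually matters, and it cannot be repaired from the hypotheses alone, since for $t<\theta$ one would need to exploit the negative contribution $-(\theta/t-1)f_t$ quantitatively, which requires a \emph{lower} bound on $f_t$ that is not available. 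The paper's proof does not resolve this either: it substitutes the upper bound $f_t\le Q(\theta)/t+(t-1)D$ into the factor $(1-\theta/t)$ without regard to the sign of that factor, which is legitimate only for $t\ge\theta$ (or at $t=1$ in the subcase $Q(\theta)=f_1$, where the substitution is an equality). Your instinct that the small-index regime is the real obstacle is therefore well founded, but as written your argument has a hole for $t<\theta$; filling it requires either restricting the conclusion to $t$ beyond the threshold $\theta$, enlarging $Q(\theta)$, or adding information about the first few terms of the sequence.
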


A very similar result was stated without proof in \cite{nemirovski:robust} and also Hazan~\cite{hazan:2008} is using the same.

{\em Proof}.
For $t=1$ it holds that $f_1 \leq Q(\theta)$ by definition of $Q(\theta)$. Assume that the result holds for $t \geq 1$. 
If $Q(\theta) = \theta^2C/(\theta-1)$ then we deduce:
\begin{align*}
f_{t+1} %
&\leq \frac{\theta^2 C (t-\theta)}{(\theta-1)t^2}+ \frac{C \theta^2}{t^2} + \frac{(t-\theta) (t-1) D}{t} +  D\\%
 &= \frac{\theta^2 C (t-1)}{(\theta-1)t^2} +  \frac{D (t^2 - \theta (t-1))}{t}
 \leq \frac{\theta^2 C}{(\theta-1)(t+1)} + t D \, .
\end{align*}
If on the other hand $Q(\theta)=f_1$, then
\begin{align*}
f_1 \geq \frac{\theta^2 C}{(\theta-1)} \quad \Leftrightarrow \quad (\theta-1) f_1 \geq \theta^2 C \, ,
\end{align*}
and it follows
\begin{align*}
f_{t+1} &\leq \frac{(t-\theta)f_1}{t^2} + \frac{C \theta^2}{t^2} + \frac{(t-\theta) (t-1) D}{t} +  D  \\
&= \frac{(t-1)f_1}{t^2} + \frac{\theta^2C - (\theta-1)f_1}{t^2} + \frac{D (t^2 - \theta (t-1))}{t} 
\leq \frac{f_1}{t+1}  + t D \, . \qquad \endproof
\end{align*}

%
%

\section{Tables}
\subsection{Initial $\sigma_0$ of the $\mathcal{ES}$ algorithm for all test functions}
Table~\ref{tab:sigma} reports the empirically determined optimal initial step sizes $\sigma_0$ used as input to the $\mathcal{ES}$ algorithm.
\begin{table}[H]
	\centering
	\scalebox{0.58}{
	\begin{tabular}{ | r | r | r | r | r |}
  \hline
  dim  &  $f_1$ & $f_2$ & $f_3$ & $f_4$ \\
  \hline 
4    & 0.79158  &  1.3897  & 0.2054 & 0.20395\\
8    & 0.49167  &  0.78761 & 0.08922 & 0.088145\\
16   & 0.32692  &  0.49500 & 0.04134 &  0.041273\\
32   & 0.22292  &  0.32547 &  0.019911 & 0.019905\\
64   & 0.15542  &  0.22243 & 0.0097212 &  0.0097127\\
128  & 0.10925  &   0.15638 & 0.0048305 &  0.0048335\\
256  & 0.076658 & 0.10902 & 0.0024171 &  0.0024114\\
512  & 0.054339 & 0.076568 & 0.0012012 & 0.0012006\\
1024 & 0.038367 & 0.054173 & 0.00060284 & 0.00060223\\
  \hline  
\end{tabular}	

	}
  \caption{The initial values of the stepsize $\sigma$ for $(1+1)$-ES on the test functions for various dimensions.}
	\label{tab:sigma}
\end{table}
\subsection{Data for the ellipsoid test function for $\mathbf{n\le1024}$}
Tables~\ref{tab:ellipsoid2} and~\ref{tab:ellipsoid2_f} report the numerical data used to produce Figure~\ref{fig:figure1}.
\begin{table}[H]
	\centering
	\scalebox{0.58}{
	\begin{tabular}{ | r | r r r | r r r  | r r r |  r r r |  r r r |  r | r |}
  \hline
  \multicolumn{1}{|r|}{ } & \multicolumn{3}{c|}{$\mathcal{RP}$}  & \multicolumn{3}{c|}{$\mathcal{RG}$} 
						  & \multicolumn{3}{c|}{$\mathcal{FG}$}
                          & \multicolumn{3}{c|}{$\mathcal{ARP}$} & \multicolumn{3}{c|}{$\mathcal{ES}$}
						  & \multicolumn{1}{c|}{$\mathcal{GM}$} 
                          & \multicolumn{1}{c|}{$\mathcal{GM}_{LS}$} \\
  n  &  min & max & mean & min & max & mean & min & max & mean & min & max & mean & min & max & mean & - & -\\
  \hline 
4   &   236 &  472 &  364 &   28322 & 33608 & 31549   &    966 & 1575 & 1282 &   124 & 682 & 322 & 2491 & 4557 & 3784 &   3934 &  3\\
8   &   787 & 1241 & 1088 &   22461 & 24610 & 23666   &    981 & 1262 & 1155 &   174 & 285 & 226 & 3786 & 5799 & 4906 &   3934 &  3\\
16  &  1326 & 1763 & 1624 &   18981 & 20403 & 19805   &    975 & 1164 & 1076 &   218 & 256 & 232 & 4967 & 6034 & 5400 &   3934 &  3\\
32  &  1769 & 2026 & 1880 &   17381 & 18393 & 17858   &    968 & 1102 & 1048 &   221 & 256 & 237 & 5183 & 6145 & 5625 &   3934 &  3\\
64  &  1899 & 2096 & 2001 &   16601 & 17333 & 16868   &    990 & 1079 & 1038 &   233 & 250 & 242 & 5451 & 5954 & 5729 &   3934 &  3\\
128 &  1987 & 2145 & 2076 &   16183 & 16721 & 16376   &    978 & 1061 & 1030 &   237 & 252 & 245 & 5512 & 5964 & 5753 &   3934 &  3\\
256 &  2063 & 2173 & 2117 &   15960 & 16276 & 16115   &   1007 & 1053 & 1026 &   238 & 251 & 245 & 5603 & 6065 & 5805 &   3934 &  3\\
512 &  2081 & 2159 & 2119 &   15937 & 16103 & 16011   &   1015 & 1037 & 1026 &   239 & 249 & 244 & 5706 & 5909 & 5818 &   3934 &  3\\
1024&  2109 & 2152 & 2132 &   15846 & 16065 & 15955   &   1020 & 1035 & 1027 &   242 & 247 & 244 & 5759 & 5932 & 5833 &   3934 &  3\\
  \hline  
\end{tabular}	

	}
  \caption{Ellipsoid function $f_2$ to accuracy $1.91 \cdot 10^{-6}$, $S=50n$, $L=1000$. \#ITS$/n$, ($\mathcal{GM}$ and $\mathcal{GM}_{LS}$: \#ITS).}
	\label{tab:ellipsoid2}
\end{table}
\begin{table}[H]
	\centering
	\scalebox{0.58}{
	\begin{tabular}{ | r | r r r | r r r |  r r r |  r r r |  r r r |}
  \hline
  \multicolumn{1}{|r|}{ } & \multicolumn{3}{c|}{$\mathcal{RP}$}  & \multicolumn{3}{c|}{$\mathcal{RG}$} 
                          & \multicolumn{3}{c|}{$\mathcal{FG}$}
                          & \multicolumn{3}{c|}{$\mathcal{ARP}$} & \multicolumn{3}{c|}{$\mathcal{ES}$}	\\
  n  &  min & max & mean & min & max & mean & min & max & mean & min & max & mean & min & max & mean\\
  \hline 
4   &  3155 &  6236 & 4775  &   56643 & 67217 & 63097 &   1933 & 3150 & 2564 &   1408 & 6983 & 3631 &   2491 & 4557 & 3784 \\
8   & 11043 & 17124 & 15216 &   44923 & 49221 & 47331 &   1963 & 2525 & 2310 &   2113 & 3483 & 2774 &   3786 & 5799 & 4906 \\
16  & 19182 & 25225 & 23320 &   37962 & 40806 & 39610 &   1951 & 2329 & 2152 &   2730 & 3239 & 2930 &   4967 & 6034 & 5400 \\
32  & 25768 & 29258 & 27302 &   34762 & 36785 & 35715 &   1937 & 2203 & 2097 &   2870 & 3243 & 3043 &   5183 & 6145 & 5625 \\
64  & 27723 & 30272 & 29071 &   33202 & 34667 & 33736 &   1980 & 2159 & 2077 &   3035 & 3247 & 3159 &   5451 & 5954 & 5729 \\
128 & 28791 & 30757 & 29894 &   32365 & 33441 & 32753 &   1956 & 2121 & 2059 &   3139 & 3354 & 3259 &   5512 & 5964 & 5753 \\
256 & 29363 & 30691 & 30016 &   31920 & 32552 & 32230 &   2013 & 2106 & 2053 &   3210 & 3411 & 3322 &   5603 & 6065 & 5805 \\
512 & 29173 & 30167 & 29644 &   31874 & 32207 & 32022 &   2031 & 2075 & 2053 &   3311 & 3453 & 3386 &   5706 & 5909 & 5818 \\
1024& 29316 & 29914 & 29639 &   31692 & 32131 & 31910 &   2041 & 2069 & 2054 &   3455 & 3541 & 3494 &   5759 & 5932 & 5833 \\
  \hline  
\end{tabular}	

	}
  \caption{Ellipsoid function $f_2$ to accuracy $1.91 \cdot 10^{-6}$, $S=50n$, $L=1000$. \#FES$/n$.}
	\label{tab:ellipsoid2_f}
\end{table}
\cleardoublepage

\thispagestyle{plain}

\begin{center}
\textsc{Supporting Online Material for}\\[2em]
{ {\bfseries \MakeUppercase{{Optimization of Convex Functions with Random Pursuit}$^*$}}}
\end{center}
~\\
\begin{center}
\textsc{S.~U. Stich$^\dagger$, C.~L. M\"{u}ller$^\ddagger$, and B. G\"{a}rtner$^\S$}
\end{center}

\renewcommand{\thefigure}{S-\arabic{figure}}
\renewcommand{\thetable}{S-\arabic{table}}

\renewcommand{\thefootnote}{\fnsymbol{footnote}}
\footnotetext[1]{The project CG Learning acknowledges the financial support of the Future and Emerging Technologies (FET) programme within the Seventh Framework Programme for Research of the European Commission, under FET-Open grant number: 255827}
\footnotetext[2]{Institute of Theoretical Computer Science, ETH Z\"urich, and Swiss Institute of Bioinformatics, \texttt{sstich@inf.ethz.ch}}
\footnotetext[3]{Institute of Theoretical Computer Science, ETH Z\"urich, and Swiss Institute of Bioinformatics, \texttt{christian.mueller@inf.ethz.ch}}
\footnotetext[4]{Institute of Theoretical Computer Science, ETH Z\"urich, \texttt{gaertner@inf.ethz.ch}}
\renewcommand{\thefootnote}{\arabic{footnote}}

\newpage
\section{Exemplary Matlab Codes}

\subsection{Accelerated Random Pursuit}
~
\begin{figure}[H]
\lstinputlisting{tables/rp_acc.m}
\caption{Matlab code for algorithm $\mathcal{ARP}_\mu$}
\end{figure}
\newpage

\subsection{Random Pursuit}
~
\begin{figure}[H]
\lstinputlisting{tables/rp.m}%
\caption{Matlab code for algorithm $\mathcal{RP}_\mu$}
\label{fig:rp}
\end{figure}

\subsection{Random Gradient}
~
\begin{figure}[H]
\lstinputlisting{tables/rg.m}
\caption{Matlab code for algorithm $\mathcal{RG}$}
\end{figure}
\newpage

\subsection{Accelerated Random Gradient}
~
\begin{figure}[H]
\lstinputlisting{tables/rg_acc.m}
\caption{Matlab code for algorithm $\mathcal{FG}$}
\end{figure}
\newpage

\subsection{(1+1)-ES}
~
\begin{figure}[H]
\lstinputlisting{tables/es.m}
\caption{Matlab code for algorithm $\mathcal{ES}$}
\end{figure}

\section{Tables}
%

%
%

\subsection{Number of iterations for increasing accuracy for $\mathbf{n=64}$}

Tables~\ref{tab:sphere} - \ref{tab:funnel} summarize the number of iterations needed to achieve a corresponding relative accuracy (acc) for fixed dimension $n=64$.

\begin{table}[H]
	\centering
	\scalebox{0.57}{
	\begin{tabular}{ | r | r r r | r r r |  r r r |  r r r |  r r r | r | r |}
  \hline
  \multicolumn{1}{|r|}{ } & \multicolumn{3}{c|}{$\mathcal{RP}$}  & \multicolumn{3}{c|}{$\mathcal{RG}$} 
						  & \multicolumn{3}{c|}{$\mathcal{FG}$}
                          & \multicolumn{3}{c|}{$\mathcal{ARP}$} & \multicolumn{3}{c|}{$\mathcal{ES}$}
						  & \multicolumn{1}{c|}{$\mathcal{GM}$}  
						  & \multicolumn{1}{c|}{$\mathcal{GM}_{LS}$} \\
  acc.  &  min & max & mean & min & max & mean  & min & max & mean & min & max & mean & min & max & mean & - & -\\
  \hline 
$6.25 \cdot 10^{-2}$ &  2  &  3  &  3  &  6  &  8  &  7     &    6 &  8 &  7 &   2  &  3  &  3  & 6  & 9  & 8  & 1     & 1   \\
$3.12 \cdot 10^{-2}$ &  3  &  4  &  3  &  8  &  10 &  8     &    8 & 10 &  8 &   3  &  4  &  3  & 8  & 12 & 10 & 1     & 1   \\ 
$1.56 \cdot 10^{-2}$ &  4  &  5  &  4  &  9  &  12 &  10    &    9 & 12 & 10 &   4  &  5  &  4  & 10 & 14 & 12 & 1     & 1   \\
$7.81 \cdot 10^{-3}$ &  4  &  6  &  5  &  11 &  13 &  12    &   11 & 13 & 12 &   4  &  5  &  5  & 12 & 16 & 14 & 1     & 1   \\
$3.91 \cdot 10^{-3}$ &  5  &  6  &  5  &  13 &  15 &  14    &   13 & 15 & 14 &   5  &  6  &  5  & 14 & 18 & 16 & 1     & 1   \\ 
$1.95 \cdot 10^{-3}$ &  5  &  7  &  6  &  14 &  17 &  15    &   14 & 17 & 15 &   5  &  7  &  6  & 15 & 20 & 18 & 1     & 1   \\
$9.77 \cdot 10^{-4}$ &  6  &  8  &  7  &  16 &  18 &  17    &   16 & 18 & 17 &   6  &  8  &  7  & 17 & 22 & 20 & 1     & 1   \\
$4.88 \cdot 10^{-4}$ &  7  &  9  &  7  &  18 &  20 &  19    &   18 & 20 & 19 &   7  &  9  &  7  & 18 & 24 & 21 & 1     & 1   \\
$2.44 \cdot 10^{-4}$ &  7  &  9  &  8  &  19 &  22 &  20    &   19 & 22 & 20 &   7  &  9  &  8  & 20 & 26 & 23 & 1     & 1   \\
$1.22 \cdot 10^{-4}$ &  8  &  10 &  9  &  21 &  24 &  22    &   21 & 24 & 22 &   8  &  10 &  9  & 22 & 28 & 25 & 1     & 1   \\
$6.10 \cdot 10^{-5}$ &  9  &  10 &  10 &  22 &  26 &  24    &   22 & 26 & 24 &   9  &  11 &  9  & 23 & 31 & 27 & 1     & 1   \\
$3.05 \cdot 10^{-5}$ &  9  &  11 &  10 &  24 &  28 &  25    &   24 & 28 & 25 &   10 &  11 &  10 & 26 & 32 & 29 & 1     & 1   \\
$1.53 \cdot 10^{-5}$ &  10 &  12 &  11 &  25 &  29 &  27    &   25 & 29 & 27 &   10 &  12 &  11 & 28 & 35 & 31 & 1     & 1   \\
$7.63 \cdot 10^{-6}$ &  11 &  13 &  12 &  27 &  31 &  29    &   27 & 31 & 29 &   11 &  13 &  12 & 29 & 36 & 33 & 1     & 1   \\
$3.81 \cdot 10^{-6}$ &  11 &  13 &  12 &  28 &  32 &  30    &   28 & 32 & 30 &   12 &  13 &  12 & 31 & 38 & 35 & 1     & 1   \\
$1.91 \cdot 10^{-6}$ &  12 &  14 &  13 &  30 &  34 &  32    &   30 & 34 & 32 &   12 &  14 &  13 & 33 & 41 & 37 & 1     & 1   \\
  \hline  
\end{tabular}	

	}
	\caption{Sphere function $f_1$, $m=1$, $L=1$, $S=32$, $n=64$. \#ITS$/n$, ($\mathcal{GM}$, $\mathcal{GM}_{LS}$: \#ITS).}
	\label{tab:sphere}
\end{table}

\begin{table}[H]
	\centering
	\scalebox{0.57}{
	\begin{tabular}{ | r | r r r | r r r | r r r |  r r r |  r r r |  r | r |}
  \hline
  \multicolumn{1}{|r|}{ } & \multicolumn{3}{c|}{$\mathcal{RP}$}  & \multicolumn{3}{c|}{$\mathcal{RG}$} 
						  & \multicolumn{3}{c|}{$\mathcal{FG}$}
                          & \multicolumn{3}{c|}{$\mathcal{ARP}$} & \multicolumn{3}{c|}{$\mathcal{ES}$}	
                          & \multicolumn{1}{c|}{$\mathcal{GM}$} 
                          & \multicolumn{1}{c|}{$\mathcal{GM}_{LS}$} 						  \\
  acc.  &  min & max & mean & min & max & mean   & min & max & mean & min & max & mean & min & max & mean & - & -\\
  \hline 
$6.25 \cdot 10^{-2}$ & 2   & 3   & 2    & 9     &  11    &  10      &  5  & 18  & 16   &  64  &  79  &  71  &  5    & 9    & 6    &   1    &   1 \\
$3.12 \cdot 10^{-2}$ & 2   & 3   & 3    & 11    &  13    &  12      &  17 & 21  & 18   &  70  &  86  &  77  &  6    & 10   & 8    &   1    &   1 \\
$1.56 \cdot 10^{-2}$ & 3   & 4   & 3    & 13    &  15    &  14      &  31 & 359 & 261  &  77  &  93  &  84  &  7    & 19   & 10   &   1    &   1 \\
$7.81 \cdot 10^{-3}$ & 4   & 132 & 53   & 15    &  20    &  17      &  340& 423 & 380  &  84  &  101 &  91  &  18   & 465  & 268  &   1    &   1 \\
$3.91 \cdot 10^{-3}$ & 104 & 298 & 210  & 381   &  1103  &  677     &  404& 484 & 444  &  92  &  125 &  107 &  481  & 928  & 723  &   124  &   2 \\
$1.95 \cdot 10^{-3}$ & 273 & 460 & 373  & 1863  &  2578  &  2150    &  464& 544 & 504  &  101 &  140 &  129 &  936  & 1410 & 1177 &   470  &   2 \\
$9.77 \cdot 10^{-4}$ & 433 & 624 & 536  & 3340  &  4062  &  3624    &  521& 601 & 562  &  129 &  153 &  143 &  1376 & 1869 & 1631 &   817  &   2 \\
$4.88 \cdot 10^{-4}$ & 598 & 787 & 700  & 4815  &  5536  &  5094    &  576& 657 & 618  &  142 &  164 &  153 &  1826 & 2325 & 2085 &   1163 &   2 \\
$2.44 \cdot 10^{-4}$ & 761 & 954 & 862  & 6280  &  7018  &  6564    &  630& 712 & 673  &  153 &  174 &  162 &  2264 & 2774 & 2538 &   1509 &   2 \\
$1.22 \cdot 10^{-4}$ & 921 & 1118& 1024 & 7754  &  8488  &  8036    &  683& 767 & 727  &  161 &  183 &  170 &  2732 & 3239 & 2994 &   1856 &   2 \\
$6.10 \cdot 10^{-5}$ & 1080& 1284& 1187 & 9232  &  9961  &  9508    &  736& 820 & 781  &  168 &  191 &  178 &  3172 & 3690 & 3447 &   2202 &   2 \\
$3.05 \cdot 10^{-5}$ & 1243& 1446& 1350 & 10712 &  11439 &  10980   &  787& 873 & 833  &  176 &  199 &  187 &  3635 & 4138 & 3905 &   2549 &   3 \\
$1.53 \cdot 10^{-5}$ & 1406& 1607& 1512 & 12179 &  12906 &  12453   &  839& 925 & 885  &  189 &  214 &  201 &  4083 & 4593 & 4361 &   2895 &   3 \\
$7.63 \cdot 10^{-6}$ & 1570& 1766& 1675 & 13654 &  14388 &  13923   &  890& 977 & 936  &  203 &  227 &  217 &  4537 & 5042 & 4819 &   3241 &   3 \\
$3.81 \cdot 10^{-6}$ & 1732& 1928& 1837 & 15130 &  15854 &  15395   &  940& 1029& 988  &  219 &  238 &  230 &  4989 & 5492 & 5273 &   3588 &   3 \\
$1.91 \cdot 10^{-6}$ & 1899& 2096& 2001 & 16601 &  17333 &  16868   &  990& 1079& 1038 &  233 &  250 &  242 &  5451 & 5954 & 5729 &   3934 &   3 \\
  \hline  
\end{tabular}	

	}
  \caption{Ellipsoid function $f_2$, $m=1$, $L=1000$, $S=3200$, $n=64$. \#ITS$/n$, ($\mathcal{GM}$, $\mathcal{GM}_{LS}$: \#ITS).}
	\label{tab:ellipsoid}
\end{table}

\begin{table}[H]
	\centering
	\scalebox{0.57}{
	\begin{tabular}{ | r | r r r | r r r |  r r r |  r r r |  r r r | r | r |}
  \hline
  \multicolumn{1}{|r|}{ } & \multicolumn{3}{c|}{$\mathcal{RP}$}  & \multicolumn{3}{c|}{$\mathcal{RG}$} 
						  & \multicolumn{3}{c|}{$\mathcal{FG}$}
                          & \multicolumn{3}{c|}{$\mathcal{ARP}$} & \multicolumn{3}{c|}{$\mathcal{ES}$}
						  & \multicolumn{1}{c|}{$\mathcal{GM}$} 
                          & \multicolumn{1}{c|}{$\mathcal{GM}_{LS}$} \\
  acc.  &  min & max & mean & min & max & mean   & min & max & mean & min & max & mean & min & max & mean & - & -\\
  \hline 
$6.25 \cdot 10^{-2}$ &  0   & 0   & 0     &  0     &  0     &  0       &    0 &   0 &   0 &    0 &   0 &   0 &    0 & 0    & 0    &  1    &  1   \\
$3.12 \cdot 10^{-2}$ &  0   & 0   & 0     &  0     &  0     &  0       &    0 &   0 &   0 &    0 &   0 &   0 &    0 & 0    & 0    &  1    &  1   \\
$1.56 \cdot 10^{-2}$ &  0   & 0   & 0     &  0     &  0     &  0       &    0 &   0 &   0 &    0 &   0 &   0 &    0 & 0    & 0    &  1    &  1   \\
$7.81 \cdot 10^{-3}$ &  1   & 1   & 1     &  3     &  5     &  4       &    2 &   4 &   3 &    0 &   1 &   1 &    2 & 4    & 3    &  1    &  1   \\
$3.91 \cdot 10^{-3}$ &  3   & 4   & 3     &  18    &  23    &  21      &    8 &  13 &  10 &    3 &  19 &   8 &    6 & 10   & 9    &  5    &  3   \\
$1.95 \cdot 10^{-3}$ &  8   & 12  & 10    &  74    &  84    &  79      &   29 &  40 &  34 &   12 &  66 &  25 &   22 & 31   & 26   &  19   &  10  \\
$9.77 \cdot 10^{-4}$ &  26  & 37  & 31    &  256   &  283   &  269     &   55 &  84 &  70 &   22 &  75 &  41 &   73 & 102  & 86   &  64   &  32  \\
$4.88 \cdot 10^{-4}$ &  79  & 109 & 92    &  790   &  837   &  811     &  104 & 152 & 130 &   33 & 132 &  61 &  233 & 292  & 257  &  191  &  96  \\
$2.44 \cdot 10^{-4}$ &  200 & 264 & 228   &  1993  &  2065  &  2022    &  164 & 258 & 201 &   35 & 173 &  87 &  577 & 700  & 633  &  477  &  239 \\
$1.22 \cdot 10^{-4}$ &  405 & 501 & 453   &  3955  &  4053  &  4004    &  224 & 328 & 279 &   58 & 199 & 127 & 1142 & 1344 & 1249 &  945  &  473 \\
$6.10 \cdot 10^{-5}$ &  665 & 778 & 723   &  6349  &  6465  &  6412    &  293 & 382 & 348 &   74 & 255 & 151 & 1867 & 2101 & 1998 &  1512 &  756 \\
$3.05 \cdot 10^{-5}$ &  948 & 1060& 1005  &  8849  &  8964  &  8917    &  369 & 427 & 402 &   88 & 391 & 213 & 2641 & 2891 & 2780 &  2101 &  1051\\
$1.53 \cdot 10^{-5}$ &  1229& 1345& 1288  &  11375 &  11482 &  11435   &  397 & 613 & 442 &   96 & 449 & 265 & 3406 & 3692 & 3563 &  2694 &  1347\\
$7.63 \cdot 10^{-6}$ &  1509& 1619& 1570  &  13886 &  14016 &  13958   &  450 & 894 & 677 &  129 & 533 & 342 & 4223 & 4461 & 4348 &  3288 &  1644\\
$3.81 \cdot 10^{-6}$ &  1792& 1902& 1853  &  16401 &  16545 &  16482   &  463 & 935 & 887 &  188 & 632 & 400 & 5008 & 5254 & 5133 &  3881 &  1940\\
$1.91 \cdot 10^{-6}$ &  2068& 2191& 2136  &  18922 &  19075 &  19004   &  892 & 970 & 942 &  192 & 678 & 473 & 5766 & 6050 & 5916 &  4474 &  2237\\
  \hline  
\end{tabular}	

	}
  \caption{Nesterov smooth $f_3$, $L=1000$, $S=10833$, $n=64$. \#ITS$/n$, ($\mathcal{GM}$, $\mathcal{GM}_{LS}$: \#ITS).}
	\label{tab:nesterovsmooth}
\end{table}

\begin{table}[H]
	\centering
	\scalebox{0.57}{
	\begin{tabular}{ | r | r r r | r r r | r r r |  r r r |  r r r | r | r | }
  \hline
  \multicolumn{1}{|r|}{ } & \multicolumn{3}{c|}{$\mathcal{RP}$}  & \multicolumn{3}{c|}{$\mathcal{RG}$} 					  
						  & \multicolumn{3}{c|}{$\mathcal{FG}$}
                          & \multicolumn{3}{c|}{$\mathcal{ARP}$} & \multicolumn{3}{c|}{$\mathcal{ES}$}
						  & \multicolumn{1}{c|}{$\mathcal{GM}$}  
                          & \multicolumn{1}{c|}{$\mathcal{GM}_{LS}$}  	\\
  acc.  &  min & max & mean & min & max & mean   & min & max & mean & min & max & mean & min & max & mean & - & -\\
  \hline 
$6.25 \cdot 10^{-2}$ &  1   &  2    &  1   &  7    &  9    &  8      &    4 &   5 &   5 &  1   &  6   &  2   &  3   & 5   & 4     &  2    &  1   \\
$3.12 \cdot 10^{-2}$ &  3   &  5    &  4   &  25   &  31   &  28     &   10 &  16 &  13 &  5   &  19  &  11  &  8   & 13  & 11    &  7    &  4   \\
$1.56 \cdot 10^{-2}$ &  8   &  12   &  10  &  79   &  90   &  82     &   27 &  37 &  33 &  9   &  45  &  22  &  19  & 35  & 27    &  19   &  10  \\
$7.81 \cdot 10^{-3}$ &  19  &  31   &  24  &  199  &  219  &  204    &   46 &  70 &  59 &  21  &  53  &  32  &  52  & 76  & 65    &  48   &  24  \\
$3.91 \cdot 10^{-3}$ &  43  &  62   &  50  &  415  &  458  &  432    &   74 & 107 &  88 &  25  &  58  &  41  &  109 & 152 & 135   &  102  &  51  \\
$1.95 \cdot 10^{-3}$ &  82  &  106  &  90  &  772  &  824  &  791    &  104 & 140 & 118 &  39  &  88  &  53  &  214 & 272 & 247   &  187  &  94  \\
$9.77 \cdot 10^{-4}$ &  131 &  166  &  146 &  1248 &  1341 &  1284   &  138 & 177 & 157 &  49  &  96  &  65  &  352 & 447 & 399   &  303  &  152 \\
$4.88 \cdot 10^{-4}$ &  195 &  236  &  214 &  1841 &  1965 &  1900   &  169 & 216 & 191 &  52  &  109 &  73  &  533 & 638 & 587   &  449  &  225 \\
$2.44 \cdot 10^{-4}$ &  267 &  317  &  295 &  2540 &  2694 &  2621   &  195 & 259 & 228 &  64  &  114 &  82  &  740 & 875 & 810   &  619  &  310 \\
$1.22 \cdot 10^{-4}$ &  352 &  409  &  384 &  3326 &  3513 &  3420   &  244 & 293 & 266 &  68  &  127 &  95  &  976 & 1132& 1059  &  807  &  404 \\
$6.10 \cdot 10^{-5}$ &  441 &  506  &  479 &  4174 &  4387 &  4277   &  282 & 335 & 306 &  89  &  132 &  107 &  1233& 1403& 1325  &  1009 &  505 \\
$3.05 \cdot 10^{-5}$ &  539 &  605  &  579 &  5057 &  5288 &  5168   &  321 & 376 & 342 &  96  &  160 &  119 &  1508& 1681& 1603  &  1219 &  610 \\
$1.53 \cdot 10^{-5}$ &  642 &  715  &  682 &  5969 &  6206 &  6079   &  351 & 402 & 376 &  108 &  168 &  130 &  1788& 1974& 1885  &  1434 &  717 \\
$7.63 \cdot 10^{-6}$ &  740 &  816  &  786 &  6893 &  7138 &  7001   &  383 & 430 & 409 &  117 &  177 &  138 &  2071& 2269& 2173  &  1650 &  826 \\
$3.81 \cdot 10^{-6}$ &  845 &  920  &  890 &  7816 &  8064 &  7926   &  423 & 453 & 435 &  127 &  184 &  148 &  2357& 2568& 2462  &  1868 &  935 \\
$1.91 \cdot 10^{-6}$ &  954 &  1023 &  995 &  8727 &  8995 &  8854   &  441 & 534 & 458 &  137 &  188 &  159 &  2651& 2854& 2751  &  2086 &  1044\\
  \hline  
\end{tabular}	

	}
  \caption{Nesterov strongly convex function $f_4$, $m=1$, $L=1000$, $S=1000$, $n=64$. \#ITS$/n$, ($\mathcal{GM}$, $\mathcal{GM}_{LS}$: \#ITS),}
	\label{tab:nesterovstrong}
\end{table}

\begin{table}[H]
	\centering
	\scalebox{0.57}{
	\begin{tabular}{ | r | r r r | r r r |  r r r |  r r r |  r r r | r | r |}
  \hline
  \multicolumn{1}{|r|}{ } & \multicolumn{3}{c|}{$\mathcal{RP}$}  & \multicolumn{3}{c|}{$\mathcal{RG}$} 
						  & \multicolumn{3}{c|}{$\mathcal{FG}$}
                          & \multicolumn{3}{c|}{$\mathcal{ARP}$} & \multicolumn{3}{c|}{$\mathcal{ES}$}	
                          & \multicolumn{1}{c|}{$\mathcal{GM}$} 
                          & \multicolumn{1}{c|}{$\mathcal{GM}_{LS}$} 						  \\
  acc.  &  min & max & mean & min & max & mean & min & max & mean & min & max & mean & min & max & mean & - & - \\
  \hline 
$6.25 \cdot 10^{-2}$ &   4 &  6 &  5 &  - &  - &  -    &  - & - & - &    4 &  6 &  5 & 13 & 17 & 14 & -    &  1 \\
$3.12 \cdot 10^{-2}$ &   7 &  9 &  7 &  - &  - &  -    &  - & - & - &    7 &  8 &  7 & 19 & 25 & 22 & -    &  1 \\ 
$1.56 \cdot 10^{-2}$ &   8 & 11 &  9 &  - &  - &  -    &  - & - & - &    8 & 10 &  9 & 24 & 32 & 27 & -    &  1 \\
$7.81 \cdot 10^{-3}$ &  10 & 13 & 11 &  - &  - &  -    &  - & - & - &   10 & 12 & 11 & 29 & 37 & 32 & -    &  1 \\
$3.91 \cdot 10^{-3}$ &  11 & 15 & 12 &  - &  - &  -    &  - & - & - &   12 & 14 & 12 & 32 & 42 & 36 & -    &  1 \\ 
$1.95 \cdot 10^{-3}$ &  13 & 16 & 14 &  - &  - &  -    &  - & - & - &   13 & 15 & 14 & 35 & 46 & 40 & -    &  1 \\
$9.77 \cdot 10^{-4}$ &  14 & 17 & 15 &  - &  - &  -    &  - & - & - &   14 & 17 & 15 & 39 & 50 & 43 & -    &  1 \\
$4.88 \cdot 10^{-4}$ &  16 & 19 & 17 &  - &  - &  -    &  - & - & - &   15 & 18 & 17 & 43 & 54 & 47 & -    &  1 \\
$2.44 \cdot 10^{-4}$ &  17 & 20 & 18 &  - &  - &  -    &  - & - & - &   17 & 20 & 18 & 47 & 58 & 51 & -    &  1 \\
$1.22 \cdot 10^{-4}$ &  18 & 22 & 19 &  - &  - &  -    &  - & - & - &   18 & 22 & 19 & 51 & 62 & 55 & -    &  1 \\
$6.10 \cdot 10^{-5}$ &  19 & 23 & 21 &  - &  - &  -    &  - & - & - &   19 & 23 & 21 & 55 & 66 & 59 & -    &  1 \\
$3.05 \cdot 10^{-5}$ &  21 & 24 & 22 &  - &  - &  -    &  - & - & - &   21 & 25 & 22 & 59 & 70 & 63 & -    &  1 \\
$1.53 \cdot 10^{-5}$ &  22 & 25 & 23 &  - &  - &  -    &  - & - & - &   22 & 26 & 23 & 62 & 74 & 67 & -    &  1 \\
$7.63 \cdot 10^{-6}$ &  23 & 27 & 25 &  - &  - &  -    &  - & - & - &   23 & 28 & 25 & 67 & 77 & 70 & -    &  1 \\
$3.81 \cdot 10^{-6}$ &  25 & 28 & 26 &  - &  - &  -    &  - & - & - &   24 & 29 & 26 & 71 & 81 & 74 & -    &  1 \\
$1.91 \cdot 10^{-6}$ &  26 & 30 & 28 &  - &  - &  -    &  - & - & - &   26 & 30 & 28 & 73 & 85 & 78 & -    &  1 \\
  \hline  
\end{tabular}	

	}
  \caption{Funnel function $f_5$, $S=32$, $n=64$. \#ITS$/n$, ($\mathcal{GM}$, $\mathcal{GM}_{LS}$: \#ITS).}
	\label{tab:funnel}
\end{table}
%
%
\subsection{Number of function evaluations for increasing accuracy for fixed dimension $n=64$}

Tables~\ref{tab:sphere_f} - \ref{tab:funnel_f} summarize the number of function evaluations needed to achieve a corresponding relative accuracy (acc) for fixed dimension $n=64$.

\begin{table}[H]
	\centering
	\scalebox{0.6}{
	\begin{tabular}{ | r | r r r | r r r |  r r r |  r r r |  r r r |}
  \hline
  \multicolumn{1}{|r|}{ } & \multicolumn{3}{c|}{$\mathcal{RP}$}  & \multicolumn{3}{c|}{$\mathcal{RG}$} 
                          & \multicolumn{3}{c|}{$\mathcal{FG}$}
                          & \multicolumn{3}{c|}{$\mathcal{ARP}$} & \multicolumn{3}{c|}{$\mathcal{ES}$}	\\
  acc.  &  min & max & mean & min & max & mean & min & max & mean & min & max & mean & min & max & mean\\
  \hline 
$6.25 \cdot 10^{-2}$ &  10 & 14 & 12 &   11 & 15 & 14 &   12 & 15 & 13 &    9 & 13 & 11 &    6 &  9 &  8 \\
$3.12 \cdot 10^{-2}$ &  12 & 17 & 14 &   15 & 19 & 17 &   15 & 18 & 17 &   12 & 16 & 14 &    8 & 12 & 10 \\ 
$1.56 \cdot 10^{-2}$ &  15 & 20 & 17 &   18 & 23 & 20 &   18 & 21 & 20 &   15 & 19 & 17 &   10 & 14 & 12 \\
$7.81 \cdot 10^{-3}$ &  17 & 23 & 20 &   22 & 27 & 24 &   21 & 25 & 23 &   17 & 22 & 20 &   12 & 16 & 14 \\
$3.91 \cdot 10^{-3}$ &  20 & 27 & 22 &   25 & 30 & 27 &   24 & 28 & 26 &   20 & 25 & 22 &   14 & 18 & 16 \\ 
$1.95 \cdot 10^{-3}$ &  22 & 29 & 25 &   28 & 34 & 30 &   27 & 32 & 29 &   22 & 29 & 25 &   15 & 20 & 18 \\
$9.77 \cdot 10^{-4}$ &  25 & 33 & 28 &   32 & 37 & 34 &   29 & 35 & 33 &   24 & 32 & 27 &   17 & 22 & 20 \\
$4.88 \cdot 10^{-4}$ &  27 & 35 & 31 &   35 & 40 & 37 &   32 & 38 & 36 &   27 & 35 & 30 &   18 & 24 & 21 \\
$2.44 \cdot 10^{-4}$ &  30 & 38 & 33 &   38 & 44 & 41 &   35 & 42 & 39 &   29 & 38 & 33 &   20 & 26 & 23 \\
$1.22 \cdot 10^{-4}$ &  32 & 40 & 36 &   41 & 47 & 44 &   38 & 45 & 42 &   32 & 40 & 36 &   22 & 28 & 25 \\
$6.10 \cdot 10^{-5}$ &  35 & 42 & 39 &   44 & 52 & 47 &   41 & 48 & 45 &   36 & 43 & 38 &   23 & 31 & 27 \\
$3.05 \cdot 10^{-5}$ &  38 & 44 & 42 &   47 & 56 & 51 &   44 & 51 & 49 &   39 & 46 & 41 &   26 & 32 & 29 \\
$1.53 \cdot 10^{-5}$ &  41 & 48 & 44 &   51 & 58 & 54 &   47 & 55 & 52 &   41 & 49 & 44 &   28 & 35 & 31 \\
$7.63 \cdot 10^{-6}$ &  43 & 52 & 47 &   53 & 61 & 57 &   51 & 58 & 55 &   44 & 51 & 47 &   29 & 36 & 33 \\
$3.81 \cdot 10^{-6}$ &  45 & 54 & 50 &   57 & 65 & 60 &   54 & 63 & 58 &   47 & 53 & 49 &   31 & 38 & 35 \\
$1.91 \cdot 10^{-6}$ &  47 & 57 & 52 &   60 & 68 & 64 &   57 & 66 & 61 &   50 & 56 & 52 &   33 & 41 & 37 \\
  \hline  
\end{tabular}	

	}
	\caption{Sphere function $f_1$, $m=1$, $L=1$, $S=32$, $n=64$. \#FES$/n$.}
	\label{tab:sphere_f}
\end{table}

\begin{table}[H]
	\centering
	\scalebox{0.6}{
	\begin{tabular}{ | r | r r r | r r r |  r r r |  r r r |  r r r |}
  \hline
  \multicolumn{1}{|r|}{ } & \multicolumn{3}{c|}{$\mathcal{RP}$}  & \multicolumn{3}{c|}{$\mathcal{RG}$} 
                          & \multicolumn{3}{c|}{$\mathcal{FG}$}
                          & \multicolumn{3}{c|}{$\mathcal{ARP}$} & \multicolumn{3}{c|}{$\mathcal{ES}$}	\\
  acc.  &  min & max & mean & min & max & mean & min & max & mean & min & max & mean & min & max & mean\\
  \hline 
$6.25 \cdot 10^{-2}$ &    12 &    21 &    16 &      18 &    22 &    20 &     10 &   36 &   33 &    637 &  809 &  733 &      5 &    9 &    6 \\
$3.12 \cdot 10^{-2}$ &    15 &    25 &    19 &      21 &    25 &    23 &     34 &   41 &   37 &    705 &  883 &  802 &      6 &   10 &    8 \\
$1.56 \cdot 10^{-2}$ &    20 &    32 &    25 &      25 &    30 &    28 &     62 &  717 &  522 &    777 &  966 &  872 &      7 &   19 &   10 \\
$7.81 \cdot 10^{-3}$ &    29 &  1671 &   662 &      30 &    39 &    34 &    680 &  847 &  760 &    863 & 1092 &  963 &     18 &  465 &  268 \\
$3.91 \cdot 10^{-3}$ &  1416 &  3925 &  2803 &     761 &  2206 &  1354 &    808 &  968 &  887 &    978 & 1418 & 1185 &    481 &  928 &  723 \\
$1.95 \cdot 10^{-3}$ &  3809 &  6240 &  5125 &    3725 &  5155 &  4300 &    928 & 1087 & 1008 &   1054 & 1613 & 1489 &    936 & 1410 & 1177 \\
$9.77 \cdot 10^{-4}$ &  6096 &  8577 &  7458 &    6681 &  8124 &  7248 &   1042 & 1203 & 1124 &   1526 & 1790 & 1688 &   1376 & 1869 & 1631 \\
$4.88 \cdot 10^{-4}$ &  8495 & 10961 &  9848 &    9629 & 11072 & 10189 &   1152 & 1315 & 1236 &   1702 & 1955 & 1835 &   1826 & 2325 & 2085 \\
$2.44 \cdot 10^{-4}$ & 10936 & 13455 & 12278 &   12560 & 14036 & 13129 &   1259 & 1425 & 1347 &   1844 & 2108 & 1966 &   2264 & 2774 & 2538 \\
$1.22 \cdot 10^{-4}$ & 13340 & 15896 & 14705 &   15507 & 16977 & 16072 &   1365 & 1533 & 1455 &   1960 & 2242 & 2088 &   2732 & 3239 & 2994 \\
$6.10 \cdot 10^{-5}$ & 15726 & 18390 & 17144 &   18463 & 19921 & 19015 &   1471 & 1641 & 1562 &   2068 & 2362 & 2211 &   3172 & 3690 & 3447 \\
$3.05 \cdot 10^{-5}$ & 18147 & 20800 & 19566 &   21423 & 22877 & 21960 &   1575 & 1747 & 1666 &   2188 & 2508 & 2353 &   3635 & 4138 & 3905 \\
$1.53 \cdot 10^{-5}$ & 20573 & 23183 & 21970 &   24357 & 25812 & 24906 &   1678 & 1851 & 1770 &   2389 & 2743 & 2552 &   4083 & 4593 & 4361 \\
$7.63 \cdot 10^{-6}$ & 22981 & 25521 & 24375 &   27308 & 28776 & 27846 &   1780 & 1954 & 1873 &   2627 & 2918 & 2789 &   4537 & 5042 & 4819 \\
$3.81 \cdot 10^{-6}$ & 25337 & 27890 & 26733 &   30259 & 31707 & 30790 &   1880 & 2057 & 1975 &   2857 & 3083 & 2993 &   4989 & 5492 & 5273 \\
$1.91 \cdot 10^{-6}$ & 27723 & 30272 & 29071 &   33202 & 34667 & 33736 &   1980 & 2159 & 2077 &   3035 & 3247 & 3159 &   5451 & 5954 & 5729 \\
  \hline  
\end{tabular}	

	}
  \caption{Ellipsoid function $f_2$, $m=1$, $L=1000$, $S=3200$, $n=64$. \#FES/$n$.}
	\label{tab:ellipsoid_f}
\end{table}

\begin{table}[H]
	\centering
	\scalebox{0.6}{
	\begin{tabular}{ | r | r r r | r r r |  r r r |  r r r |  r r r |}
  \hline
  \multicolumn{1}{|r|}{ } & \multicolumn{3}{c|}{$\mathcal{RP}$}  & \multicolumn{3}{c|}{$\mathcal{RG}$} 
                          & \multicolumn{3}{c|}{$\mathcal{FG}$}
                          & \multicolumn{3}{c|}{$\mathcal{ARP}$} & \multicolumn{3}{c|}{$\mathcal{ES}$}	\\
  acc.  &  min & max & mean & min & max & mean & min & max & mean & min & max & mean & min & max & mean\\
  \hline 
$6.25 \cdot 10^{-2}$ &      0 &     0 &     0  &       0 &     0 &     0 &      0 &    0 &    0 &      0 &    0 &    0 &      0 &    0 &    0 \\
$3.12 \cdot 10^{-2}$ &      0 &     0 &     0  &       0 &     0 &     0 &      0 &    0 &    0 &      0 &    0 &    0 &      0 &    0 &    0 \\
$1.56 \cdot 10^{-2}$ &      0 &     0 &     0  &       0 &     0 &     0 &      0 &    0 &    0 &      0 &    0 &    0 &      0 &    0 &    0 \\
$7.81 \cdot 10^{-3}$ &      5 &     8 &     7  &       7 &    10 &     8 &      5 &    7 &    6 &      4 &   12 &    7 &      2 &    4 &    3 \\
$3.91 \cdot 10^{-3}$ &     22 &    33 &    27  &      36 &    45 &    42 &     17 &   25 &   21 &     29 &  163 &   66 &      6 &   10 &    9 \\
$1.95 \cdot 10^{-3}$ &     73 &   123 &    96  &     147 &   169 &   158 &     58 &   80 &   67 &    102 &  604 &  221 &     22 &   31 &   26 \\
$9.77 \cdot 10^{-4}$ &    282 &   410 &   344  &     511 &   566 &   538 &    110 &  167 &  140 &    198 &  690 &  375 &     73 &  102 &   86 \\
$4.88 \cdot 10^{-4}$ &    930 &  1304 &  1094  &    1579 &  1675 &  1621 &    208 &  305 &  261 &    301 & 1264 &  584 &    233 &  292 &  257 \\
$2.44 \cdot 10^{-4}$ &   2440 &  3232 &  2788  &    3987 &  4130 &  4045 &    327 &  517 &  401 &    328 & 1716 &  866 &    577 &  700 &  633 \\
$1.22 \cdot 10^{-4}$ &   4996 &  6186 &  5588  &    7909 &  8106 &  8009 &    448 &  656 &  557 &    574 & 2132 & 1332 &   1142 & 1344 & 1249 \\
$6.10 \cdot 10^{-5}$ &   8225 &  9610 &  8942  &   12697 & 12930 & 12824 &    586 &  765 &  696 &    763 & 2864 & 1621 &   1867 & 2101 & 1998 \\
$3.05 \cdot 10^{-5}$ &  11739 & 13117 & 12445  &   17699 & 17928 & 17833 &    739 &  855 &  803 &    903 & 4485 & 2372 &   2641 & 2891 & 2780 \\
$1.53 \cdot 10^{-5}$ &  15220 & 16645 & 15948  &   22750 & 22963 & 22870 &    795 & 1225 &  885 &   1004 & 5201 & 3010 &   3406 & 3692 & 3563 \\
$7.63 \cdot 10^{-6}$ &  18678 & 20032 & 19423  &   27772 & 28033 & 27917 &    900 & 1788 & 1355 &   1410 & 6242 & 3979 &   4223 & 4461 & 4348 \\
$3.81 \cdot 10^{-6}$ &  22154 & 23511 & 22902  &   32801 & 33091 & 32963 &    926 & 1870 & 1775 &   2145 & 7381 & 4691 &   5008 & 5254 & 5133 \\
$1.91 \cdot 10^{-6}$ &  25520 & 27034 & 26351  &   37844 & 38150 & 38008 &   1785 & 1939 & 1885 &   2199 & 8149 & 5609 &   5766 & 6050 & 5916 \\
  \hline  
\end{tabular}	

	}
  \caption{Nesterov smooth $f_3$, $L=1000$, $S=10833$, $n=64$. \#FES$/n$.}
	\label{tab:nesterovsmooth_f}
\end{table}

\begin{table}[H]
	\centering
	\scalebox{0.6}{
	\begin{tabular}{ | r | r r r | r r r |  r r r |  r r r |  r r r |}
  \hline
  \multicolumn{1}{|r|}{ } & \multicolumn{3}{c|}{$\mathcal{RP}$}  & \multicolumn{3}{c|}{$\mathcal{RG}$} 
                          & \multicolumn{3}{c|}{$\mathcal{FG}$}
                          & \multicolumn{3}{c|}{$\mathcal{ARP}$} & \multicolumn{3}{c|}{$\mathcal{ES}$}	\\
  acc.  &  min & max & mean & min & max & mean & min & max & mean & min & max & mean & min & max & mean\\
  \hline 
$6.25 \cdot 10^{-2}$ &      8 &    15 &    12 &      13 &    18 &    15 &    8 &    11 &   9 &      8 &   52 &   15 &      3 &    5 &    4  \\
$3.12 \cdot 10^{-2}$ &     29 &    45 &    35 &      50 &    61 &    56 &    21 &   32 &  26 &     43 &  173 &   93 &      8 &   13 &   11  \\
$1.56 \cdot 10^{-2}$ &     74 &   120 &    97 &     157 &   179 &   164 &    54 &   75 &  66 &     83 &  421 &  193 &     19 &   35 &   27  \\
$7.81 \cdot 10^{-3}$ &    198 &   342 &   255 &     397 &   438 &   408 &    93 &  139 & 119 &    184 &  497 &  290 &     52 &   76 &   65  \\
$3.91 \cdot 10^{-3}$ &    490 &   718 &   570 &     831 &   915 &   864 &   148 &  214 & 175 &    223 &  555 &  391 &    109 &  152 &  135  \\
$1.95 \cdot 10^{-3}$ &    969 &  1258 &  1071 &    1543 &  1648 &  1583 &   208 &  281 & 237 &    373 &  896 &  516 &    214 &  272 &  247  \\
$9.77 \cdot 10^{-4}$ &   1578 &  2012 &  1757 &    2495 &  2682 &  2568 &   276 &  354 & 314 &    477 &  989 &  662 &    352 &  447 &  399  \\
$4.88 \cdot 10^{-4}$ &   2374 &  2877 &  2605 &    3681 &  3929 &  3800 &   338 &  433 & 383 &    530 & 1144 &  763 &    533 &  638 &  587  \\
$2.44 \cdot 10^{-4}$ &   3260 &  3883 &  3607 &    5079 &  5387 &  5242 &   389 &  519 & 457 &    675 & 1209 &  876 &    740 &  875 &  810  \\
$1.22 \cdot 10^{-4}$ &   4315 &  5011 &  4710 &    6653 &  7026 &  6840 &   487 &  587 & 532 &    727 & 1370 & 1037 &    976 & 1132 & 1059  \\
$6.10 \cdot 10^{-5}$ &   5417 &  6220 &  5882 &    8347 &  8774 &  8555 &   563 &  671 & 612 &    978 & 1437 & 1179 &   1233 & 1403 & 1325  \\
$3.05 \cdot 10^{-5}$ &   6621 &  7435 &  7116 &   10113 & 10577 & 10337 &   642 &  752 & 685 &   1074 & 1782 & 1323 &   1508 & 1681 & 1603  \\
$1.53 \cdot 10^{-5}$ &   7872 &  8772 &  8371 &   11937 & 12411 & 12159 &   702 &  804 & 753 &   1212 & 1882 & 1468 &   1788 & 1974 & 1885  \\
$7.63 \cdot 10^{-6}$ &   9069 & 10005 &  9628 &   13786 & 14275 & 14002 &   766 &  861 & 818 &   1286 & 1986 & 1559 &   2071 & 2269 & 2173  \\
$3.81 \cdot 10^{-6}$ &  10331 & 11264 & 10885 &   15633 & 16129 & 15852 &   845 &  906 & 870 &   1448 & 2078 & 1687 &   2357 & 2568 & 2462  \\
$1.91 \cdot 10^{-6}$ &  11629 & 12482 & 12122 &   17455 & 17990 & 17708 &   883 & 1069 & 916 &   1557 & 2134 & 1825 &   2651 & 2854 & 2751  \\
  \hline  
\end{tabular}	

	}
  \caption{Nesterov strongly convex function $f_4$, $m=1$, $L=1000$, $S=1000$ $n=64$. \#FES$/n$.}
	\label{tab:nesterovstrong_f}
\end{table}

\begin{table}[H]
	\centering
	\scalebox{0.6}{
	\begin{tabular}{ | r | r r r | r r r |  r r r |  r r r |  r r r |}
  \hline
  \multicolumn{1}{|r|}{ } & \multicolumn{3}{c|}{$\mathcal{RP}$}  & \multicolumn{3}{c|}{$\mathcal{RG}$} 
                          & \multicolumn{3}{c|}{$\mathcal{FG}$}
                          & \multicolumn{3}{c|}{$\mathcal{ARP}$} & \multicolumn{3}{c|}{$\mathcal{ES}$}	\\
  acc.  &  min & max & mean & min & max & mean & min & max & mean & min & max & mean & min & max & mean\\
  \hline 
$6.25 \cdot 10^{-2}$ &   37 &  58 &  45 &   - & - & - &   - & - & - &    40 &  51 &  46 &   13 & 17 & 14 \\
$3.12 \cdot 10^{-2}$ &   63 &  86 &  71 &   - & - & - &   - & - & - &    62 &  76 &  70 &   19 & 25 & 22 \\ 
$1.56 \cdot 10^{-2}$ &   83 & 110 &  92 &   - & - & - &   - & - & - &    84 & 102 &  92 &   24 & 32 & 27 \\
$7.81 \cdot 10^{-3}$ &  103 & 129 & 112 &   - & - & - &   - & - & - &   103 & 123 & 112 &   29 & 37 & 32 \\
$3.91 \cdot 10^{-3}$ &  118 & 150 & 132 &   - & - & - &   - & - & - &   123 & 144 & 132 &   32 & 42 & 36 \\ 
$1.95 \cdot 10^{-3}$ &  140 & 168 & 151 &   - & - & - &   - & - & - &   140 & 167 & 151 &   35 & 46 & 40 \\
$9.77 \cdot 10^{-4}$ &  159 & 187 & 171 &   - & - & - &   - & - & - &   159 & 191 & 171 &   39 & 50 & 43 \\
$4.88 \cdot 10^{-4}$ &  178 & 207 & 190 &   - & - & - &   - & - & - &   177 & 212 & 192 &   43 & 54 & 47 \\
$2.44 \cdot 10^{-4}$ &  196 & 230 & 210 &   - & - & - &   - & - & - &   197 & 231 & 211 &   47 & 58 & 51 \\
$1.22 \cdot 10^{-4}$ &  215 & 252 & 232 &   - & - & - &   - & - & - &   218 & 265 & 234 &   51 & 62 & 55 \\
$6.10 \cdot 10^{-5}$ &  236 & 270 & 252 &   - & - & - &   - & - & - &   239 & 289 & 255 &   55 & 66 & 59 \\
$3.05 \cdot 10^{-5}$ &  257 & 293 & 273 &   - & - & - &   - & - & - &   257 & 313 & 275 &   59 & 70 & 63 \\
$1.53 \cdot 10^{-5}$ &  279 & 316 & 294 &   - & - & - &   - & - & - &   277 & 330 & 295 &   62 & 74 & 67 \\
$7.63 \cdot 10^{-6}$ &  297 & 340 & 316 &   - & - & - &   - & - & - &   295 & 355 & 317 &   67 & 77 & 70 \\
$3.81 \cdot 10^{-6}$ &  320 & 365 & 339 &   - & - & - &   - & - & - &   318 & 378 & 338 &   71 & 81 & 74 \\
$1.91 \cdot 10^{-6}$ &  338 & 384 & 360 &   - & - & - &   - & - & - &   342 & 399 & 361 &   73 & 85 & 78 \\
  \hline  
\end{tabular}	

	}
  \caption{Funnel function $f_5$, $S=32$ $n=64$. \#FES$/n$.}
	\label{tab:funnel_f}
\end{table}

\subsection{Different line search parameters for fixed dimension $n=64$}
Tables~\ref{tab:mu_i} and~\ref{tab:mu_f} summarize the number of function evaluations needed by $\mathcal{RP}_\mu$ on the ellipsoid function $f_2$ to achieve a relative accuracy of $1.91 \cdot 10^{-6}$ for different parameters $\mu$ that were passed to the used Matlab line search (cf. Figure~\ref{fig:rp}).

\begin{table}[H]
	\centering
	\scalebox{0.6}{
	\begin{tabular}{ | r | r | r | r | r | r | r | r | r | r | r |}
  \hline
  \multicolumn{1}{|r|}{ acc. } & \multicolumn{1}{c|}{1\textsc{e}$-1$} 
                          & \multicolumn{1}{c|}{1\textsc{e}$-2$} 
                          & \multicolumn{1}{c|}{1\textsc{e}$-3$} 
                          & \multicolumn{1}{c|}{1\textsc{e}$-4$} 
                          & \multicolumn{1}{c|}{1\textsc{e}$-5$} 
                          & \multicolumn{1}{c|}{1\textsc{e}$-6$} 
                          & \multicolumn{1}{c|}{1\textsc{e}$-7$} 
                          & \multicolumn{1}{c|}{1\textsc{e}$-8$} 
                          & \multicolumn{1}{c|}{1\textsc{e}$-9$} 						  
                          & \multicolumn{1}{c|}{1\textsc{e}$-10$} \\
  \hline 
$6.25 \cdot 10^{-2}$ &  2     &  2     & 2    &  2     & 2    & 2    & 2      &  2     & 2     & 2   \\
$3.12 \cdot 10^{-2}$ &  3     &  3     & 3    &  3     & 3    & 3    & 3      &  3     & 3     & 3   \\
$1.56 \cdot 10^{-2}$ &  3     &  3     & 3    &  3     & 3    & 3    & 3      &  3     & 3     & 3   \\
$7.81 \cdot 10^{-3}$ &  57    &  50    & 56   &  65    & 53   & 53   & 53     &  53    & 73    & 73   \\
$3.91 \cdot 10^{-3}$ &  201   &  194   & 219  &  225   & 210  & 210  & 210    &  210   & 232   & 232   \\
$1.95 \cdot 10^{-3}$ &  360   &  353   & 382  &  389   & 373  & 373  & 373    &  373   & 395   & 395   \\
$9.77 \cdot 10^{-4}$ &  523   &  516   & 546  &  552   & 536  & 536  & 536    &  536   & 558   & 558   \\
$4.88 \cdot 10^{-4}$ &  685   &  678   & 709  &  715   & 700  & 700  & 700    &  700   & 721   & 721   \\
$2.44 \cdot 10^{-4}$ &  849   &  842   & 871  &  878   & 862  & 862  & 862    &  862   & 883   & 883   \\
$1.22 \cdot 10^{-4}$ &  1011  &  1004  & 1035 &  1041  & 1024 & 1024 & 1024   &  1024  & 1045  & 1045   \\
$6.10 \cdot 10^{-5}$ &  1173  &  1166  & 1198 &  1203  & 1187 & 1187 & 1187   &  1187  & 1208  & 1208   \\
$3.05 \cdot 10^{-5}$ &  1335  &  1330  & 1360 &  1366  & 1350 & 1350 & 1350   &  1350  & 1370  & 1370   \\
$1.53 \cdot 10^{-5}$ &  1497  &  1494  & 1523 &  1528  & 1512 & 1512 & 1512   &  1512  & 1533  & 1533   \\
$7.63 \cdot 10^{-6}$ &  1661  &  1657  & 1686 &  1691  & 1675 & 1675 & 1675   &  1675  & 1696  & 1696   \\
$3.81 \cdot 10^{-6}$ &  1824  &  1819  & 1849 &  1853  & 1837 & 1837 & 1837   &  1837  & 1858  & 1858   \\
$1.91 \cdot 10^{-6}$ &  1986  &  1982  & 2013 &  2017  & 2001 & 2001 & 2001   &  2001  & 2020  & 2020   \\
  \hline  
\end{tabular}	

	}
  \caption{Different line search parameters $\mu$ for $\mathcal{RP}_\mu$ on ellipsoid function $f_2$, $m=1$, $L=1000$, $S=3200$, $n=64$, mean of 25 runs of \#ITS/$n$ to reach to reach a relative accuracy of $1.91 \cdot 10^{-6}$.}
	\label{tab:mu_i}
\end{table}

\begin{table}[H]
	\centering
	\scalebox{0.6}{
	\begin{tabular}{ | r | r | r | r | r | r | r | r | r | r | r |}
  \hline
  \multicolumn{1}{|r|}{ acc. } & \multicolumn{1}{c|}{1\textsc{e}$-1$} 
                          & \multicolumn{1}{c|}{1\textsc{e}$-2$} 
                          & \multicolumn{1}{c|}{1\textsc{e}$-3$} 
                          & \multicolumn{1}{c|}{1\textsc{e}$-4$} 
                          & \multicolumn{1}{c|}{1\textsc{e}$-5$} 
                          & \multicolumn{1}{c|}{1\textsc{e}$-6$} 
                          & \multicolumn{1}{c|}{1\textsc{e}$-7$} 
                          & \multicolumn{1}{c|}{1\textsc{e}$-8$} 
                          & \multicolumn{1}{c|}{1\textsc{e}$-9$} 						  
                          & \multicolumn{1}{c|}{1\textsc{e}$-10$} \\
  \hline 
$6.25 \cdot 10^{-2}$ &  15     &  16     &  16     &  17    & 16    &  16    & 16      &  16     &  16     & 16   \\
$3.12 \cdot 10^{-2}$ &  18     &  20     &  20     &  21    & 19    &  19    & 19      &  19     &  20     & 20   \\
$1.56 \cdot 10^{-2}$ &  24     &  25     &  25     &  26    & 25    &  25    & 25      &  25     &  26     & 26   \\
$7.81 \cdot 10^{-3}$ &  542    &  482    &  657    &  815   & 662   &  662   & 662     &  662    &  969    & 970   \\
$3.91 \cdot 10^{-3}$ &  1973   &  1908   &  2664   &  2983  & 2803  &  2805  & 2805    &  2806   &  3378   & 3390   \\
$1.95 \cdot 10^{-3}$ &  3565   &  3495   &  4651   &  5273  & 5125  &  5131  & 5131    &  5131   &  6140   & 6173   \\
$9.77 \cdot 10^{-4}$ &  5187   &  5120   &  6544   &  7558  & 7458  &  7469  & 7469    &  7470   &  9078   & 9126   \\
$4.88 \cdot 10^{-4}$ &  6813   &  6750   &  8319   &  9867  & 9848  &  9866  & 9867    &  9869   &  12224  & 12277   \\
$2.44 \cdot 10^{-4}$ &  8447   &  8380   &  10005  &  12191 & 12278 &  12307 & 12310   &  12311  &  15528  & 15582   \\
$1.22 \cdot 10^{-4}$ &  10069  &  10008  &  11653  &  14484 & 14705 &  14750 & 14754   &  14756  &  18872  & 18926   \\
$6.10 \cdot 10^{-5}$ &  11687  &  11627  &  13285  &  16693 & 17144 &  17212 & 17218   &  17220  &  22238  & 22293   \\
$3.05 \cdot 10^{-5}$ &  13309  &  13264  &  14908  &  18817 & 19566 &  19664 & 19674   &  19676  &  25576  & 25633   \\
$1.53 \cdot 10^{-5}$ &  14935  &  14902  &  16537  &  20820 & 21970 &  22114 & 22128   &  22131  &  28935  & 28996   \\
$7.63 \cdot 10^{-6}$ &  16572  &  16531  &  18167  &  22698 & 24375 &  24582 & 24602   &  24606  &  32308  & 32373   \\
$3.81 \cdot 10^{-6}$ &  18198  &  18155  &  19798  &  24449 & 26733 &  27030 & 27059   &  27064  &  35655  & 35726   \\
$1.91 \cdot 10^{-6}$ &  19824  &  19781  &  21435  &  26120 & 29071 &  29495 & 29537   &  29542  &  38993  & 39070   \\
  \hline  
\end{tabular}	

	}
  \caption{Different line search parameters $\mu$ for $\mathcal{RP}_\mu$ on ellipsoid function $f_2$, $m=1$, $L=1000$, $S=3200$, $n=64$, mean of 25 runs of  \#FES/$n$ to reach to reach a relative accuracy of $1.91 \cdot 10^{-6}$.}
	\label{tab:mu_f}
\end{table}

\end{document}